\newcommand{\ra}[1]{\renewcommand{\arraystretch}{#1}}
\begin{document}

%**************************************************************V
%\layout
%**************************************************************

\newtheorem{theorem}{Theorem}[section]
\newtheorem{proposition}[theorem]{Proposition}
\newtheorem{lemma}[theorem]{Lemma}
\newtheorem{corollary}[theorem]{Corollary}
\newtheorem{conjecture}[theorem]{Conjecture}
\newtheorem{question}[theorem]{Question}
\newtheorem{problem}[theorem]{Problem}

\theoremstyle{definition}
\newtheorem{definition}[theorem]{Definition}
\newtheorem{example}[theorem]{Example}

\newtheorem{remark}[theorem]{Remark}

\def\theenumi{\roman{enumi}}

\numberwithin{equation}{section}

\renewcommand{\Re}{\operatorname{Re}}
\renewcommand{\Im}{\operatorname{Im}}

%*************************************************************V
\newcommand{\ind}{1\hspace{-.27em}\mbox{\rm l}}
\newcommand{\inde}{1\hspace{-.23em}\mathrm{l}}
\newcommand{\lqn}[1]{\noalign{\noindent $\displaystyle{#1}$}}
\newcommand{\bm}[1]{\mbox{\boldmath{$#1$}}}
\newcommand{\eps}{\varepsilon}
\newcommand{\lip}{{\rm Lip}}
\newcommand{\R}{\mathbb{R}}
\newcommand{\N}{\mathbb{N}}
\newcommand{\Z}{\mathbb{Z}}
\newcommand{\Sym}{\mathfrak{S}}
\newcommand{\Leb}{{\lambda}}
\newcommand{\md}{\mathbb{D}}
\newcommand{\me}{\mathbb{E}}
\newcommand{\Q}{\mathbb{Q}}
\newcommand{\1}{{\sf 1}}
\newcommand{\tmu}{\tilde{\mu}}
\newcommand{\tnu}{\tilde{\nu}}
\newcommand{\hmu}{\hat{\mu}}
\newcommand{\mb}{\bar{\mu}}
\newcommand{\bnu}{\overline{\nu}}
\newcommand{\tx}{\tilde{x}}
\newcommand{\tz}{\tilde{z}}
\newcommand{\A}{{\mathcal A}}
\newcommand{\skria}{{\mathcal A}}
\newcommand{\skrib}{{\mathcal B}}
\newcommand{\skric}{{\mathcal C}}
\newcommand{\skrid}{{\mathcal D}}
\newcommand{\skrie}{{\mathcal E}}
\newcommand{\skrif}{{\mathcal F}}
\newcommand{\skrig}{{\mathcal G}}
\newcommand{\skrih}{{\mathcal H}}
\newcommand{\skrii}{{\mathcal I}}
\newcommand{\skrik}{{\mathcal K}}
\newcommand{\skril}{{\mathcal L}}
\newcommand{\skrim}{{\mathcal M}}
\newcommand{\skrin}{{\mathcal N}}
\newcommand{\skrip}{{\mathcal P}}
\newcommand{\skriq}{{\mathcal Q}}
\newcommand{\skris}{{\mathcal S}}
\newcommand{\skrit}{{\mathcal T}}
\newcommand{\skriu}{{\mathcal U}}
\newcommand{\skriw}{{\mathcal W}}
\newcommand{\skrix}{{\mathcal X}}
\newcommand{\heap}[2]{\genfrac{}{}{0pt}{}{#1}{#2}}
\newcommand{\sfrac}[2]{ \, \mbox{$\frac{#1}{#2}$}}
 \renewcommand{\labelenumi}{(\roman{enumi})}
\newcommand{\ssup}[1] {{\scriptscriptstyle{({#1}})}}

\newcommand{\cal}{\mathcal}
\allowdisplaybreaks

%*************************************************************

\def \R {{\mathbb R}}
\def \HH {{\mathbb H}}
\def \N {{\mathbb N}}
\def \C {{\mathbb C}}
\def \Z {{\mathbb Z}}
\def \Q {{\mathbb Q}}
\def \TT {{\mathbb T}}
\newcommand{\T}{\mathbb T}
\def \Dc {{\mathcal D}}

\newcommand{\tr}[1] {\hbox{tr}\left( #1\right)}

\newcommand{\area}{\operatorname{area}}

\newcommand{\Norm}{\mathcal N}
\newcommand{\simgeq}{\gtrsim}%
\newcommand{\simleq}{\lesssim}
%%these are with amssym, else use {\stackrel{>}{\sim}}

\newcommand{\length}{\operatorname{length}}

\newcommand{\curve}{\mathcal C} % curve
\newcommand{\vE}{\mathcal E} % lattice points on the sphere
\newcommand{\Ec}{\mathcal {E}} % lattice points on the sphere
\newcommand{\Sc}{\mathcal{S}} % lattice points on the sphere

\newcommand{\dist}{\operatorname{dist}}
\newcommand{\supp}{\operatorname{supp}}
\newcommand{\spec}{\operatorname{spec}}
\newcommand{\diam}{\operatorname{diam}}

\newcommand{\Ccap}{\operatorname{Cap}}
\newcommand{\E}{\mathbb E}

\newcommand{\sumstar}{\sideset{}{^\ast}\sum}

\newcommand {\Zc} {\mathcal{Z}} % igor's nodal intersections number
\newcommand{\ninumber}{\Zc}

\newcommand{\zeigen}{E} % eigenvalue
\newcommand{\eigen}{m}%igor's eigenvalue converntion

\newcommand{\ave}[1]{\left\langle#1\right\rangle} % average

\newcommand{\Var}{\operatorname{Var}}
\newcommand{\Prob}{\operatorname{Prob}}

\newcommand{\var}{\operatorname{Var}}
\newcommand{\Cov}{{\rm{Cov}}}
\newcommand{\meas}{\operatorname{meas}}

\newcommand{\leg}[2]{\left( \frac{#1}{#2} \right)} % Legendre symbol

\renewcommand{\^}{\widehat}

\newcommand {\Rc} {\mathcal{R}}

\title[Fluctuations of critical points]
{Fluctuations of the total number of critical points \\ of random spherical harmonics }
\author{V. Cammarota and I. Wigman}

\address{Department of Mathematics, Universit\`a degli Studi di Roma Tor Vergata } \email{cammarot@mat.uniroma2.it}

\address{Department of Mathematics, King's College London }
\email{igor.wigman@kcl.ac.uk}

\date{\today}

%\thanks{Research Supported by ERC Grants n$^{\text{o}}$ 277742 \emph{Pascal} and n$^{\text{o}}$ 335141.}

\begin{abstract}
We determine the asymptotic law for the fluctuations of the total number of critical points of random Gaussian
spherical harmonics in the high degree limit. Our results have implications on the sophistication degree of
an appropriate percolation process for modelling nodal domains of eigenfunctions on generic compact surfaces or billiards.
\end{abstract}

\maketitle

\section{Introduction and main results}

%\begin{verbatim} fatto con ricerca
%All double (or more) spacing should be eliminated
%\end{verbatim}

\subsection{Critical points of random spherical harmonics}
It is well-known that the eigenvalues $\lambda$ of the Laplacian $\Delta$ on the $2$-dimensional round unit sphere ${\cal S}^2$, 
satisfying the Schr\"{o}dinger equation
\begin{equation*}
\Delta f+\lambda f=0
\end{equation*}
are of the form
$\lambda=\lambda_{\ell}=\ell(\ell+1)$ for some integer $\ell\ge 1$.
For any given eigenvalue $\lambda_{\ell}$ of the above form, the corresponding eigenspace is the
$(2 \ell+1)$-dimensional space of spherical harmonics of degree $\ell$; we can choose an arbitrary $L^{2}$-orthonormal
basis $\left\{ Y_{\mathbb{\ell }m}(.)\right\} _{-\ell \le m \le \ell }$, and consider random eigenfunctions of the form
\begin{equation} \label{rsh}
f_{\ell }(x)=\frac{1}{\sqrt{2\ell +1}}\sum_{m=-\ell }^{\ell }a_{\ell m}Y_{\ell m}(x),
\end{equation}
where the coefficients $\left\{ a_{\mathbb{\ell }m}\right\}_{-\ell \le m \le \ell}$ are
independent, standard Gaussian variables.

The random fields
$$\{f_{\ell }(x), \; x\in {\cal S}^2\}$$
are centred Gaussian and the law of $f_{\ell }$ in \eqref{rsh} is invariant with respect to the choice of $\{Y_{\ell m}\}$.
Also, $f_{\ell }$ are isotropic, meaning that
the probability laws of $f_{\ell }(\cdot )$ and $f_{\ell }^{g}(\cdot
):=f_{\ell }(g\cdot )$ are the same for every rotation $g\in SO(3)$.
By the addition theorem for spherical harmonics \cite[Theorem 9.6.3]{AAR} the
covariance function of $f_{\ell}$ is given by
\begin{equation*}
\mathbb{E}[f_{\ell }(x)f_{\ell }(y)]=P_{\ell }(\cos d(x,y)),
\end{equation*}%
where $P_{\ell }$ are the usual Legendre polynomials,
$$\cos d(x,y)=\cos \theta _{x}\cos \theta _{y}+\sin \theta _{x}\sin
\theta _{y}\cos (\varphi _{x}-\varphi _{y})$$
is the spherical geodesic
distance between $x$ and $y$, $\theta \in \lbrack 0,\pi ]$, $\varphi \in
\lbrack 0,2\pi )$ are standard spherical coordinates and $(\theta
_{x},\varphi _{x})$, $(\theta _{y},\varphi _{y})$ are the spherical
coordinates of $x$ and $y$ respectively.

\vspace{3mm}

Our primary focus is the total number of critical points of $f_{\ell}$
\begin{equation*}
\mathcal{N}^{c}(f_\ell )=\#\{x\in {\cal S}^2: \nabla f_{\ell }(x)=0\}.
\end{equation*}
It is known \cite{Nicolaescu_2, CMW} that, as $\ell \to \infty$, the expected 
total number of critical points $\mathcal{N}^{c}(f_\ell )$ is asymptotic to
$$\mathbb{E}[\mathcal{N}^{c}(f_\ell )]=\frac{2}{\sqrt 3} \ell^2+O(1).$$
An upper bound for the variance of the number of critical points $\mathcal{N}^{c}(f_\ell )$ was also derived \cite{CMW}:
\begin{equation*}
{\rm Var}(\mathcal{N}^{c}(f_\ell ))= O(\ell^{\frac 5 2});
\end{equation*}
in fact, it is likely that the same method yields the stronger result
\begin{equation*}
{\rm Var}(\mathcal{N}^{c}(f_\ell ))= O(\ell^2\log{\ell}).
\end{equation*}
It was conjectured \cite{CMW} that the true asymptotic behaviour of the variance is
\begin{align} \label{3:02}
{\rm Var}(\mathcal{N}^{c}(f_\ell ))={\rm const} \cdot \ell^2 \log \ell+O(\ell^2).
\end{align}

More generally let $I \subseteq \mathbb{R}$ be any interval and $\mathcal{N}^{c}_I(f_\ell )$ be the number
of critical points of $f_{\ell}$ with value in $I$:
$$\mathcal{N}^{c}_I(f_\ell )=\#\{x\in {\cal S}^2: f_{\ell }(x) \in I , \nabla f_{\ell }(x)=0\};$$
it was proved in \cite[Theorem 1.2]{CMW} that as $\ell \to \infty$ it holds that
\begin{equation*}
\text{Var}(\mathcal{N}^{c}_I(f_\ell ))=\ell^3 \nu^c(I)+O(\ell^{5/2}),
\end{equation*}
where the leading constant $\nu^c(I)$ was evaluated explicitly.
For some intervals $I$, such as, for example $I=\mathbb{R}$ (corresponding
to the total number of critical points), the leading constant $\nu^c(I)$ vanishes, and, accordingly,
the order of magnitude of the variance is smaller than $\ell^3$.
In this paper we prove \eqref{3:02}, i.e. we determine the precise asymptotic
shape for the variance of the total number of critical points of $f_{\ell}$.

\subsection{Statement of the main result}

The principal result of this paper is the following:
\begin{theorem} \label{th1}
As $\ell \to \infty$
\begin{align*}
{\rm Var}({\cal N}^c(f_{\ell}))&= \frac{1}{3^3 \pi^2} \ell^2 \log \ell+O(\ell^2).%\\
% \text{Var}[{\cal N}^e_{\mathbb R}(f_{\ell})]&= \frac{1}{3^3 \cdot 2^2 \pi^2} \ell^2 \log \ell+O(\ell^2),\\
 %\text{Var}[{\cal N}^s_{\mathbb R}(f_{\ell})]&= \frac{1}{3^3 \cdot 2^2 \pi^2} \ell^2 \log \ell+O(\ell^2).
\end{align*}
The constant in the $O(\cdot)$ term is universal.
\end{theorem}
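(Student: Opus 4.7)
My approach is via the Kac-Rice formula combined with a Wiener chaos decomposition of $\mathcal{N}^c(f_\ell)$. After suitable regularization, the count can be written as $\int_{\mathcal{S}^2} |\det \nabla^2 f_\ell(x)|\, \delta_0(\nabla f_\ell(x))\, dx$, a square-integrable functional of the underlying Gaussian field, which decomposes as
\[ \mathcal{N}^c(f_\ell) = \sum_{q \geq 0} \mathcal{N}^c(f_\ell)[2q], \]
the projection onto the even Wiener chaoses (odd chaoses vanish by the symmetry $f_\ell \mapsto -f_\ell$). By chaos orthogonality, $\text{Var}(\mathcal{N}^c(f_\ell)) = \sum_{q \geq 1} \text{Var}(\mathcal{N}^c(f_\ell)[2q])$. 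By isotropy, each $\text{Var}(\mathcal{N}^c(f_\ell)[2q])$ reduces to a one-dimensional integral over the geodesic distance $\phi \in [0,\pi]$, whose integrand is a combination of products of $P_\ell(\cos\phi)$ and its first two derivatives, weighted by constants arising from the $(2q)$-th Hermite components of the Kac-Rice integrand evaluated against the covariance structure of the Gaussian $6$-jet $(f_\ell, \nabla f_\ell, \nabla^2 f_\ell)$.

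The key structural ingredient is the cancellation $\mathcal{N}^c(f_\ell)[2] \equiv 0$, consistent with the vanishing leading constant $\nu^c(\mathbb{R}) = 0$ established in [CMW, Theorem 1.2]. The mechanism is the eigenfunction identity $\text{tr}\, \nabla^2 f_\ell(x) = -\ell(\ell+1)\, f_\ell(x)$ (equivalent to $\Delta f_\ell = -\ell(\ell+1) f_\ell$ restricted to $\mathcal{S}^2$), which ties the trace of the Hessian to the value of the field. After Gram-Schmidt decorrelating the Gaussian jet at a fixed $x$, the degree-$2$ Hermite coefficients of $|\det \nabla^2 f_\ell(x)|\, \delta_0(\nabla f_\ell(x))$ combine algebraically into a kernel that vanishes upon integration, establishing the cancellation and reducing the problem to the $q \geq 2$ contributions.

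The leading term then comes from the fourth chaos. Its variance is an explicit linear combination of integrals of the form
\[ \int_0^\pi P_\ell(\cos\phi)^a\, [P_\ell'(\cos\phi)]^b\, [P_\ell''(\cos\phi)]^c\, \sin\phi\, d\phi \]
with $a + b + c = 4$, multiplied by overall normalization factors of order $\ell^4$. Using Hilb's asymptotic $P_\ell(\cos\phi) \sim (\phi/\sin\phi)^{1/2} J_0((\ell+\tfrac{1}{2})\phi)$ and the analogous expressions for the derivatives (with appropriate powers of $\ell$), the bulk range $\ell^{-1} \lesssim \phi \lesssim \pi - \ell^{-1}$ yields, after averaging out the oscillations via $\overline{J_0^4} \sim 3/(2\pi^2 u^2)$, a logarithmic divergence of size $(\log \ell)/\ell^2$. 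Combined with the $\ell^4$ normalization this produces the claimed order $\ell^2 \log \ell$, and explicit evaluation of the Hermite-$4$ Gaussian expectations gives the constant $1/(3^3 \pi^2)$. The higher chaoses $q \geq 3$ are shown to contribute only $O(\ell^2)$ via Cauchy-Schwarz on the Hermite expansion together with the uniform bulk decay $|P_\ell(\cos\phi)| = O((\ell \sin\phi)^{-1/2})$.

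The principal obstacles are twofold. First, proving $\mathcal{N}^c(f_\ell)[2] \equiv 0$ requires a careful algebraic reduction of the Hermite-$2$ coefficients of the Kac-Rice density using the spherical eigenfunction identity, rather than merely an asymptotic estimate of the chaos-$2$ variance. Second, the chaos-$4$ computation requires tracking a large collection of terms from the Hermite expansion of $|\det \nabla^2 f_\ell|\, \delta_0(\nabla f_\ell)$, identifying precisely those whose Legendre-polynomial integrals exhibit the $\log \ell$ singularity with the correct constant, and controlling the non-logarithmic remainders together with the endpoint contributions near $\phi = 0, \pi$ so that they are absorbed into the $O(\ell^2)$ error term claimed in the theorem.
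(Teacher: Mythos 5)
Your proposal takes a genuinely different route from the paper. The paper proves Theorem~\ref{th1} by establishing an approximate Kac--Rice formula (handling covariance degeneracies via Voronoi cells), then Taylor-expanding the conditional Gaussian expectation $q(\mathbf{a}_\ell(\phi))$ in the perturbation parameters of $\Delta_\ell(\phi)$ to order four, evaluating the resulting integrals $A_{i_1\cdots i_k,\ell}$ of Legendre-polynomial products via the refined Hilb asymptotics of Appendix~\ref{horror}, and finally plugging in the Gaussian moments $\mathcal{I}_0,\mathcal{I}_2,\mathcal{I}_4$: the $\ell^3$ coefficient $[\mathcal{I}_2-2^3\cdot 5\,\mathcal{I}_0]^2/2^{10}$ vanishes numerically, leaving $[2^6\cdot 3\cdot 17\,\mathcal{I}_0-2^4\cdot 11\,\mathcal{I}_2+\mathcal{I}_4]^2/(2^{18}\pi^2)=1/(3^3\pi^2)$ for the $\ell^2\log\ell$ term. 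You instead propose a Wiener chaos decomposition of $\mathcal{N}^c(f_\ell)$, with the cancellation localized in an identically vanishing second chaos and the leading constant produced by the fourth chaos. These are two different organizing principles for the same underlying computation; the paper's is more self-contained and turns the cancellation into a checkable numerical identity, while yours, if executed, would likely yield a quantitative CLT via the fourth-moment theorem as a byproduct.

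Two caveats deserve emphasis. First, the paper does not prove that $\mathcal{N}^c(f_\ell)[2]\equiv 0$; it only shows the coefficient of $\ell^3$ in the Kac--Rice expansion vanishes, which is a weaker statement (it controls $\mathrm{Var}(\mathcal{N}^c[2])$ but does not force the projection to be the zero random variable). Your claim of exact vanishing is strictly stronger and requires its own algebraic argument using $\Delta f_\ell=-\lambda_\ell f_\ell$ together with isotropy and integration by parts on $\mathcal{S}^2$; your sketch gestures at the mechanism but does not supply it. Second, dismissing the chaoses $q\geq 3$ as $O(\ell^2)$ via Cauchy--Schwarz on each term is not sufficient on its own: the chaos expansion has infinitely many terms, so one must prove a bound on the projection coefficients that is summable in $q$, uniformly in $\ell$ — this is a real piece of work, analogous in spirit to the paper's need to control the $O(\|\mathbf{a}\|^5)$ remainder in \eqref{noemi}. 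Neither issue is fatal, but both must be resolved before the argument is complete, and as written the proposal is a correct-looking plan rather than a proof.
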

As in \cite{CMW}, our argument is based on an approximate version of the Kac-Rice formula for counting the number of zeros of the gradient
of $f_{\ell}$ (see Section \ref{kac-rice}). It is easy to adapt the same approach to separate critical points
into extrema and saddles; in fact, we have the following:

 \begin{remark}
Let $\mathcal{N}^{e}(f_\ell)$ and $\mathcal{N}^{s}(f_\ell)$ be the total number of extrema and saddles of $f_{\ell}$
\begin{equation*}
\mathcal{N}^{e}(f_\ell)=\#\{x\in {\cal S}^2: \nabla f_{\ell }(x)=0,\text{det}(\nabla ^{2}f_{\ell
}(x))>0\},
\end{equation*}%
\begin{equation*}
\mathcal{N}^{s}(f_\ell)=\#\{x\in {\cal S}^2
: \nabla f_{\ell }(x)=0,\text{det}(\nabla ^{2}f_{\ell
}(x))<0\}.
\end{equation*}%
As $\ell \to \infty$ we have that
\begin{align}
{\rm Var}({\cal N}^e(f_{\ell}))&= \frac{1}{ 2^2 \cdot 3^3 \pi^2} \ell^2 \log \ell+O(\ell^2), \label{N}\\
 {\rm Var}({\cal N}^s(f_{\ell}))&= \frac{1}{2^2 \cdot 3^3 \pi^2} \ell^2 \log \ell+O(\ell^2). \label{NN}
\end{align}
The asymptotic laws for the fluctuations of the total number of extrema and saddles in \eqref{N} and \eqref{NN} 
follow immediately from Theorem \ref{th1} and Morse Theory. In fact, 
$${\cal N}^c(f_{\ell})={\cal N}^e(f_{\ell})+{\cal N}^s(f_{\ell})$$ 
and, via Morse Theory, it is possible to prove that 
$${\cal N}^e(f_{\ell})= \frac{{\cal N}^c(f_{\ell})}{2}+1.$$
\end{remark}
\vspace{0.2cm}

 \begin{remark}
For the intervals $I \ne \R$ such that the constant $\nu^c(I)$, $\nu^e(I)$ or $\nu^s(I)$ vanish, the variance of the number of critical points, extrema and saddles in $I$ has the following asymptotic behaviour: as $\ell \to \infty$
\begin{align} \label{propinI}
{\rm Var}({\cal N}^a_I(f_{\ell}))= [\mu^a(I)]^2 \ell^2 \log \ell + O(\ell^2),
\end{align}
where we use $a=c,e,s$ to denote critical points extrema and saddles,
$
 \mu^a(I)=\int_I \mu^a(t) d t,
$
and the functions $\mu^a$, for $a=c,e,s$ are defined in \eqref{cr}-\eqref{sad} and derived in Appendix \ref{ProofpropinI}.
\end{remark}

\subsection{Nodal domains and percolation}

The nodal domains of $f_{\ell}$ are the connected components of the complement of the nodal lines $f^{-1}_{\ell}(0)$, i.e.
the connected components of
$${\cal S}^2 \setminus f^{-1}_{\ell}(0).$$
Let $N(f_{\ell})$ be the number of nodal domains of $f_{\ell}$.
Nazarov and Sodin \cite{N&S} proved that there exists a constant $a>0$ such that the expected number of nodal
domains is asymptotic to
\begin{align} \label{N-S}
\mathbb{E}[N(f_{\ell})]\sim a \ell^2.
\end{align}

Little is known about the leading constant $a$ in \eqref{N-S}. For once the nodal domains
number is bounded from above by the total number of critical points; the latter inequality
could be improved by a factor of $2$ by separating the critical points into extrema and saddles
(for example, via Morse Theory),
an approach pursued by Nicolaescu ~\cite{Nicolaescu_2} yielding the upper bound $$a\le \frac{1}{\sqrt 3};$$
while it is possible to improve the latter bound by using other local estimates (e.g. ~\cite{Krishnapur}),
these are far off the numerical Monte-Carlo simulations or the conjectured values of $a$.

\vspace{3mm}

To the other end, other than the Nastasescu's ~\cite{Nastasescu} explicating the Nazarov-Sodin
``barrier" construction ~\cite{N&S} (yielding a tiny lower bound on $a$),
to our best knowledge, no lower bound for $a$ is known rigorously.
Bogomolny and Schmit ~\cite{Bogomolny-Schmit} conjectured that, as $\ell\rightarrow\infty$,
nodal domains of $f_{\ell}$ (more generally, deterministic Laplace eigenfunctions on generic compact surfaces or billiards)
are described by the clusters in a rectangular lattice bond percolation-like process
with $\approx \ell^{2}$ sites (called the Percolation Model), and in particular that the true value of $a$ equals
the leading constant $$a=\frac{3\sqrt{3}-5}{\pi}\approx 0.0624$$ for the asymptotic number of connected clusters
in the Percolation Model. Here we think of the maxima and minima of $f_{\ell}$ as rigidly arranged along
two mutually dual percolation lattices; adjacent maxima are connected independently with probability $\frac{1}{2}$,
if and only if the dual minima are disconnected.

Some recent simulations ~\cite{Nastasescu,Belyaev Kereta}
showed deviations of about $4.5\%$ between the predicted constant for $a$ and its numerical values;
these cannot be attributed to numerical errors. It is then desirable to come up with a more sophisticated percolation
model\footnote{We would like to thank Dmitry Belyaev for discussing connections between our work and percolation.}
~\cite{Belyaev Kereta,Belyaev private} that would match these constant more precisely, where, in particular,
the arrangement critical points of $f_{\ell}$ would exhibit some degree of randomness, less rigid than rectangular lattice.
The variance \eqref{propinI} of the total number of critical points (or the extrema) of $f_{\ell}$ is then crucial
in determining the rigidity or flexibility of the (random) percolation sites.

\subsection{Acknowledgements}

The research leading to these results has received funding from the
European Research Council under the European Union's Seventh
Framework Programme (FP7/2007-2013) / ERC grant agreements
n$^{\text{o}}$ 277742 (V.C.) and n$^{\text{o}}$ 335141 (I.W.).
We are grateful to Dmitry Belyaev, Domenico Marinucci and Zeev Rudnick for
some useful discussions and suggestions and to Mikhail Sodin
for discussion especially with relations to nodal domains.

\section{On (approximate) Kac-Rice formula for computing $2$nd (factorial) moment}
\label{kac-rice}

In this section we express the second factorial moment of ${\cal N}^c(f_{\ell})$ via Kac-Rice
formula. Let $\mathcal{E}\subseteq \R^{n}$ be a nice Euclidian domain,
and $g:\mathcal{E}\rightarrow \R^{n}$ a centred Gaussian
random field, a.s. smooth. Define the $2$-point correlation function
$$K_{2}=K_{2;g}:\mathcal{E}^{2}\rightarrow\R$$ of the zeros of $g$ as
\begin{equation}
\label{eq:2pnt corr def}
K_{2}(x,y) = \phi_{(g(x),g(y))}(\mathbf{0},\mathbf{0})\cdot
\E[ |\det J_{g}(x) | \cdot |\det J_{g}(y) | \big| g(x)=g(y)=\mathbf{0} ],
\end{equation}
where $\phi_{(g(x),g(y))}$ is the Gaussian probability density of $(g(x),g(y)) \in \mathbb{R}^{2n}$
and $J_{g}(x)$, $J_{g}(y)$ are the Jacobian matrices of $g$ at $x$ and $y$ respectively.
In view of~\cite[Theorem 6.3]{azaiswschebor} (see also~\cite[Proposition 1.2]{azaiswschebor}) the $2$nd
factorial moment of $g^{-1}(0)$ is given by
\begin{equation}
\label{eq:Kac-Rice 2nd fact}
\E[\# g^{-1}(0)\cdot (\# g^{-1}(0)-1)] =
\int\limits_{\mathcal{E}^{2}}K_{2}(x,y)dxdy,
\end{equation}
provided that the Gaussian distribution of $(g(x),g(y))\in \R^{2n}$
is non-degenerate for all $(x,y)\in\mathcal{E}^{2}$. Moreover, for
$\mathcal{D}_{1},\mathcal{D}_{2}\subseteq\mathcal{E}$ two nice disjoint
domains, we have
\begin{equation}
\label{eq:Kac-Rice 2nd fact disjoint}
\E[(\# g^{-1}(0)\cap \mathcal{D}_{1}) \cdot (\# g^{-1}(0)\cap \mathcal{D}_{2})] =
\iint\limits_{\mathcal{D}_{1}\times \mathcal{D}_{2}}K_{2}(x,y)dxdy,
\end{equation}
under the same non-degeneracy assumption for all $(x,y)\in \mathcal{D}_{1}\times \mathcal{D}_{2}$.
To apply \eqref{eq:Kac-Rice 2nd fact disjoint} and \eqref{eq:Kac-Rice 2nd fact}
in our case we work with the spherical coordinates on ${\cal S}^2$
and use an explicit orthonormal frame (see Section \ref{K-R_coord}).

To apply Kac-Rice formula in our case we will work with spherical coordinates on 
${\cal S}^2$ and choose an explicit orthogonal frame, see \eqref{orthogonalf} below. Counting the critical points of $f_{\ell}$ is then equivalent to counting the
zeros of the map $[0,\pi] \times [0,2 \pi] \to \mathbb{R}^2$ given by
$x \to \nabla f_{\ell}(x)$; accordingly for $x \ne \pm y$ the two-point
correlation function of critical points of $f_{\ell}$ is (cf. \eqref{eq:2pnt corr def})
\begin{equation}
\label{eq:K2 glob gen}
K_{2,\ell}(x,y) = \phi_{(\nabla f_{\ell}(x),\nabla f_{\ell}(y))}(\mathbf{0},\mathbf{0})\cdot
\E[ |\det H_{f_{\ell}}(x) | \cdot |\det H_{f_{\ell}}(y) | \big| \nabla f_{\ell}(x)=\nabla f_{\ell}(y)=\mathbf{0} ],
\end{equation}
where $H_{f_{\ell}}(x)$ and $H_{f_{\ell}}(y)$ are the Hessian matrices of $f_{\ell}$ at $x$ and $y$ respectively.
Here~\cite[Theorem 6.3]{azaiswschebor} (see also \cite[Theorem 11.2.1]{adlertaylor}) would yield
\begin{align} \label{precise_K-R}
\mathbb{E}[{\cal N}^c(f_{\ell}) \cdot ({\cal N}^c(f_{\ell})-1)]=\iint_{{\cal S}^2 \times {\cal S}^2} K_{2,\ell}(x,y) d x d y,
\end{align}
under the condition that for all $x,y\in \mathcal{S}^{2}$, the Gaussian distribution of
$\left(\nabla f(x), \nabla f(y)\right)\in \R^{4}$ were non-degenerate.
We can easily adapt the definition of the $2$-point correlation in \eqref{eq:K2 glob gen}
to separate the critical points into extrema and saddles, or count critical points with values lying in $I$ (see Appendix \ref{K-RinI} and \cite{CMW}).

Note that the rotational invariance of $f_{\ell}$ implies that the function $K_{2,\ell}$ in \eqref{eq:K2 glob gen} depends on the points $x$, $y$ only via their geodesic distance $\phi=d(x,y)$; with a slight abuse of notations we write
$
K_{2,\ell}(\phi)=K_{2,\ell}(x,y).
$
Also, note that $K_{2,\ell}(\phi)$ is everywhere nonnegative.

We do not validate the non-degeneracy assumption of the $4\times 4$ covariance matrices of $\left(\nabla f(x), \nabla f(y)\right)$
depending on both $x$ and $y$ (and $\ell$); instead 
we prove that the precise Kac-Rice formula \eqref{precise_K-R} holds up to an
admissible error, an approach inspired by \cite{rudnickwigman}. We recall here the main steps of the proof of the {\em approximate Kac-Rice} formula and refer to \cite[Section 3]{CMW} for a complete proof.
The argument is based on a partitioning of the integration domain in \eqref{precise_K-R}; we apply \eqref{eq:Kac-Rice 2nd fact disjoint} on the valid slices we bound the contribution of the rest.\\

For $x\in {\cal S}^2$, $r>0$ let ${\cal B}(x,r)=\{y \subseteq {\cal S}^2: d(x,y) \le r\}$ be a closed spherical cap on ${\cal S}^2$. For $\varepsilon>0$ we say that $$\Xi_\varepsilon=\{\xi_{1,\varepsilon}, \dots, \xi_{N,\varepsilon}\}\subseteq {\cal S}^2$$
is a maximal $\varepsilon$-net if for every $i\ne j$ we have $d(\xi_{i,\varepsilon}, \xi_{j,\varepsilon})>\varepsilon$, and also every $x\in {\cal S}^2$ satisfies $$d(x,\Xi_\varepsilon) \le \varepsilon.$$
That is, informally speaking an $\varepsilon$-net is a collection of $\varepsilon$-separated points,
whose $\varepsilon$-thickening covers the whole of ${\cal S}^2$.
The number $N$ of points in a
$\varepsilon$-net on the sphere can be bounded from above and from below;
indeed it satisfies the following \cite[Lemma 5]{BKMP}:
\begin{align}\label{nett}
\frac{4}{\varepsilon^2} \le N \le \frac{4}{\varepsilon^2} \pi^2.
\end{align}

Given a maximal $\varepsilon$-net, it is natural to partition the sphere into disjoint sets, each of them associated with a single point in the net. This task is accomplished by the Voronoi cells construction \cite[Section 11.2]{MaPeCUP}:
%Given a maximal $\varepsilon$-net
%we define a family of Voronoi cells:
\begin{definition}
Let $\Xi_\varepsilon$ be a maximal $\varepsilon$-net. For all $\xi_{i,\varepsilon} \in \Xi_\varepsilon$, the associated {\it family of Voronoi cells} is defined by
$${\cal V}(\xi_{i,\varepsilon},\varepsilon)=\{x \in {\cal S}^2: \forall j \ne i, \; d(x, \xi_{i,\varepsilon}) \le d(x,\xi_{j,\varepsilon})\}.$$
\end{definition}
\noindent Each Voronoi cell is associated to a single point on the net. The Voronoi cells are disjoint, save to boundary overlaps, and cover the
whole sphere. \\

\noindent It is possible to prove the following:
\begin{proposition} \label{pafkao}
There exists a constant $C>0$ sufficiently big, such that the following approximate Kac-Rice holds:
\begin{align} \label{afkao2}
{\rm Var} \left( \mathcal{N}^{c} (f_{\ell })\right) = \int_{\mathcal{W}} K_{2,\ell}(x,y) \; d x d y - (\E [ \mathcal{N}^{c} (f_{\ell })])^2 + O(\ell^2),
\end{align}
where $\mathcal{W}$ is the union of all tuples of points belonging to Voronoi cells far from the domain of degeneracy, i.e.,
\begin{equation*}
\mathcal{W}=\bigcup _{d( {\cal V}(\xi _{i,\varepsilon }), {\cal V}(\xi _{j,\varepsilon })) \in (C/\ell, \pi-C/\ell)} {\cal V}(\xi _{i,\varepsilon })\times {\cal V} (\xi _{j,\varepsilon }).
\end{equation*}
\end{proposition}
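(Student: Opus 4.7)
The plan is to begin from the elementary identity
$$\text{Var}(\mathcal{N}^c(f_\ell)) = \mathbb{E}[\mathcal{N}^c(f_\ell)(\mathcal{N}^c(f_\ell)-1)] + \mathbb{E}[\mathcal{N}^c(f_\ell)] - (\mathbb{E}[\mathcal{N}^c(f_\ell)])^2,$$
and absorb the linear term $\mathbb{E}[\mathcal{N}^c(f_\ell)] = O(\ell^2)$ into the error. The task then reduces to showing
$$\mathbb{E}[\mathcal{N}^c(f_\ell)(\mathcal{N}^c(f_\ell)-1)] = \int_{\mathcal{W}} K_{2,\ell}(x,y)\, dx\, dy + O(\ell^2).$$

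Next, I fix a maximal $\varepsilon$-net $\Xi_\varepsilon$ with $\varepsilon$ of order $1/\ell$ and pass to the associated Voronoi decomposition, so that by \eqref{nett} there are $N$ cells with $N$ of order $\ell^2$. Writing the factorial moment as an expectation of a double sum over ordered pairs of distinct critical points, I would split the contributions according to the pair of Voronoi cells containing $(x,y)$ into two regimes: the \emph{good} regime, consisting of cell pairs with inter-cell geodesic distance in $(C/\ell,\pi-C/\ell)$ (this is exactly $\mathcal{W}$), and the \emph{bad} regime of near-diagonal or near-antipodal cell pairs. On the good pairs the $4\times 4$ covariance of $(\nabla f_\ell(x),\nabla f_\ell(y))$ is uniformly non-degenerate provided $C$ is chosen large enough, via standard Hilb-type asymptotics for the Legendre polynomial $P_\ell(\cos\phi)$ and its first two derivatives away from $\phi=0,\pi$; hence \eqref{eq:Kac-Rice 2nd fact disjoint} applies on every good product of cells and the sum over such pairs produces exactly $\int_{\mathcal{W}}K_{2,\ell}(x,y)\,dx\,dy$.

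The remaining step is to bound the contribution of the bad regime by $O(\ell^2)$. For the near-diagonal part I would use that each Voronoi cell has only $O(1)$ neighbours at geodesic distance $\le C/\ell$, giving $O(\ell^2)$ such close pairs; for each of them I would control the second factorial moment of critical points inside the union of the two cells by a semiclassical rescaling, passing to $F_\ell(u):=f_\ell(\exp_{x_0}(u/\ell))$ on a Euclidean ball of radius $O(1)$. The covariance of $F_\ell$ together with its first two derivatives converges uniformly on compacts to that of the scale-invariant limiting Gaussian field (Bessel-type kernel), whose gradient is non-degenerate off the origin, so a direct Kac-Rice computation yields an $O(1)$ bound for the second factorial moment of critical points in the unit ball, uniformly in $\ell$; summing over the $O(\ell^2)$ close pairs gives the desired $O(\ell^2)$. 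For the near-antipodal pairs I would exploit the exact parity $f_\ell(-x)=(-1)^\ell f_\ell(x)$, which puts the critical set in bijection with its antipodal image and reduces the near-antipodal contribution to the near-diagonal one.

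The principal obstacle, as in \cite{rudnickwigman} and \cite[Section 3]{CMW}, is exactly the local second-moment bound on caps of size $O(1/\ell)$: because the covariance of $\nabla f_\ell$ degenerates on the diagonal, one cannot invoke Kac-Rice naively there, and it is critical that the rescaled second-moment estimate be uniform in $\ell$ and survive after integration against the bad region. Once this local bound is in place, combining it with the good-region Kac-Rice identity and the variance-factorial-moment identity above yields \eqref{afkao2}.
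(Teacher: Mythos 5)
Your strategy is essentially the one the paper uses: partition the sphere into a Voronoi decomposition at scale $1/\ell$, apply exact Kac-Rice on pairs of cells in the good regime $(C/\ell,\pi-C/\ell)$, and bound the $O(\ell^2)$ remaining near-diagonal and near-antipodal cell pairs by $O(1)$ each. The paper's bookkeeping is slightly different: it writes the variance directly as $\sum_{i,j}\mathrm{Cov}\big(\mathcal{N}^c(\mathcal{V}_i),\mathcal{N}^c(\mathcal{V}_j)\big)$ and applies Cauchy--Schwarz to the bad-pair covariances, so it only ever needs a single-cell variance bound and therefore does not require your antipodal parity device (which is correct, but unnecessary in that formulation since near-antipodal pairs are handled identically to near-diagonal ones). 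For the crucial single-cell estimate, you propose a scaling-limit argument to the Bessel-type local field, whereas the paper makes the required uniformity explicit: it shows the Gaussian covariance of $(\nabla f_\ell(x),\nabla f_\ell(y))$ is non-degenerate and $K_{2,\ell}(x,y)=O(\ell^4)$ for $0<d(x,y)<c/\ell$ with $c$ sufficiently small (CMW, Lemma 3.6), so that integrating over a Voronoi cell of volume $\asymp\ell^{-2}$ yields $O(1)$, and combines this with the $O(1)$ expectation over a cell. You correctly flag the uniformity-in-$\ell$ of the rescaled Kac--Rice estimate as the crux; that explicit $O(\ell^4)$ bound is precisely what supplies it, and is the quantitative step your plan would need to fill in to be complete.
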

\vspace{0.3cm}

\subsection{On the proof of Proposition \ref{pafkao}}
\noindent Note that, almost surely, the summation of the critical points over the
Voronoi cells equals the total number of critical points, therefore we write the variance
of the total number of critical points as
\begin{equation} \label{mir1}
{\rm Var} \left( \mathcal{N}^{c} (f_{\ell })\right) =\sum_{\xi_{i,\varepsilon}, \xi_{j,\varepsilon}
\in \Xi_\varepsilon } {\rm Cov} \left( \mathcal{N}^{c}(f_{\ell }; {\cal V}(\xi _{i,\varepsilon }) ),\mathcal{N}
^{c}(f_{\ell }; {\cal V}(\xi _{j,\varepsilon }) )\right),
\end{equation}
where
$${\cal N}^c(f_\ell; {\cal V}(\xi_{i,\varepsilon},\varepsilon))=\#\{x \in {\cal V}(\xi_{i,\varepsilon},\varepsilon): \nabla f_\ell(x)=0\}.$$\\

 The main steps of the proof of Proposition \ref{pafkao} are the following. In \cite[Lemma 3.2]{CMW} it was proved that there exists a constant $C>0$ sufficiently big,
such that, in the regime $d({\cal V}(\xi _{i,\varepsilon }), {\cal V}(\xi _{j,\varepsilon })) \in (C/\ell, \pi-C/\ell)$, the covariance
matrix is nonsingular and so Kac-Rice formula holds exactly. This gives the first term in \eqref{afkao2}.\\

In the regime $d({\cal V}(\xi _{i,\varepsilon }), {\cal V}(\xi _{j,\varepsilon })) \in [0,C/\ell] \cup[ \pi-C/\ell,\pi]$, using Cauchy-Schwartz inequality, we can bound the covariance as
\begin{equation}
\label{csbound}
\left\vert {\rm Cov} \left( \mathcal{N}^{c}(f_{\ell }; {\cal V} (\xi _{i,\varepsilon })),%
\mathcal{N}^{c}(f_{\ell }; {\cal V}(\xi _{j,\varepsilon }))\right) \right\vert
\leq \sqrt{{\rm Var} \left( \mathcal{N}^{c}(f_{\ell }; {\cal V}(\xi _{i,\varepsilon
}))\right) }\cdot\sqrt{{\rm Var} \left( \mathcal{N}^{c}(f_{\ell }; {\cal V}(\xi
_{j,\varepsilon }))\right) }.
\end{equation}
In \cite[Section 4.2]{CMW} the
non-degeneracy of the covariance matrix was proved for sufficiently close points $x$, $y$, i.e., it was proved that there exists a constant $c >0$ sufficiently small
such that for $\varepsilon=c/\ell$ the Kac-Rice formula holds precisely:
\begin{align}
{\rm Var} \left( \mathcal{N}^{c}(f_{\ell };{\cal V}(\xi _{\varepsilon ,i}))\right)
&=\iint_{{\cal V} (\xi _{\varepsilon ,i})\times {\cal V}(\xi _{\varepsilon ,i})} K_{2,\ell}(x,y) dxdy +\mathbb{E}\left[ \mathcal{N}^{c}(f_{\ell }; {\cal V}(\xi _{\varepsilon ,i}))
\right] -(\mathbb{E}\left[ \mathcal{N}^{c}(f_{\ell }; {\cal V}(\xi _{\varepsilon
,i}))\right])^{2}. \label{lario11}
\end{align}
Now, in view of \cite[Lemma 3.6]{CMW}, there exists a constant $c>0$ such that, for $d(x,y)<c/\ell$, one has 
$$K_{2,\ell}(x,y)=O(\ell^4),$$ 
and since ${\cal B}(\xi_{i,\varepsilon},\varepsilon/2) \subseteq {\cal V}(\xi_{i,\varepsilon},\varepsilon) \subseteq {\cal B}(\xi_{i,\varepsilon},\varepsilon)$, we have $\text{Vol}({\cal V}(\xi_{i,\varepsilon},\varepsilon)) \approx \varepsilon^2$. Then the first term in \eqref{lario11} is bounded by
$$
\iint_{{\cal V} (\xi _{\varepsilon ,i})\times {\cal V}(\xi _{\varepsilon ,i})} K_{2,\ell}(x,y) dxdy \le \ell^4 \cdot (\pi \varepsilon^2)^2=O(1),
$$
moreover, by \cite[Proposition 1.1]{CMW}, for the expectation in \eqref{lario11} we have
$$
\mathbb{E}\left[ \mathcal{N}^{c}(f_{\ell }; {\cal V}(\xi _{\varepsilon ,i})) \right] \le \mathbb{E}\left[ \mathcal{N}^{c}(f_{\ell }; {\cal B}(\xi _{\varepsilon ,i}))
\right] \le \pi \varepsilon^2 \ell^2 =O(1).
$$
Then, using \eqref{csbound}, we can bound the covariance as
\begin{equation*}
%\label{csbound}
\left\vert {\rm Cov} \left( \mathcal{N}^{c}(f_{\ell }; {\cal V} (\xi _{i,\varepsilon })),%
\mathcal{N}^{c}(f_{\ell }; {\cal V}(\xi _{j,\varepsilon }))\right) \right\vert
= O(1),
\end{equation*}
and since by \eqref{nett} there are $O(\ell ^{2})$ pairs of Voronoi cells
at distance $d(x,y) \in [0, C/\ell] \cup [\pi- C/\ell, \pi]$%\cite[Lemma 5]{BKMP}
, we finally obtain
\begin{equation*}
\sum_{d( {\cal V}(\xi _{i,\varepsilon }),{\cal V}(\xi _{j,\varepsilon })) \in [0, C/\ell] \cup [\pi- C/\ell, \pi]
}\left\vert {\rm Cov}\left( \mathcal{N}^{c}(f_{\ell }; {\cal V}(\xi _{i,\varepsilon })),%
\mathcal{N}^{c}(f_{\ell }; {\cal V}(\xi _{j,\varepsilon }))\right) \right\vert
=O(\ell ^{2}).
\end{equation*}

\section{Proof of Theorem \ref{th1}}

\subsection{Kac-Rice formula in coordinate system} \label{K-R_coord}
To study the asymptotic behaviour of the two-point correlation
function we
write a more explicit frame-dependent formula by using the orthogonal frames \eqref{orthogonalf} so that, by the isotropic property of $f_{\ell}$,
$K_{2,\ell}$ depends only on the geodesic distance $\phi=d(x,y)$.

 For $x,y\in {\cal S}^2$ we define the following random vector
\begin{equation*}
Z_{\ell ;x,y}=(\nabla f_{\ell }(x),\nabla f_{\ell }(y),\nabla ^{2}f_{\ell
}(x),\nabla ^{2}f_{\ell }(y)).
\end{equation*}
To write the Kac-Rice formula in coordinate system, given $x$, $y \in {\cal S}^2$, we consider two local orthogonal frames
$\{e_{1}^{x}, e_{2}^{x}\}$ and
$\{e_{1}^{y},e_{2}^{y}\}$ defined in some neighbourhood of $x$ and $y$ respectively.
This gives rise to the (local) identifications
\begin{equation} \label{ident_isom}
T_{x}({\cal S}^2)\cong \mathbb{R}^{2}\cong T_{y}({\cal S}^2),
\end{equation}
so that we do not have to work with
probability densities defined on tangent planes which depend on
the points $x$ and $y$ respectively. Under the identification
\eqref{ident_isom} the random vector $Z_{\ell ;x,y}$ is a
$\mathbb{R}^{10}$ centred Gaussian random vector.

By the isotropic property of $f_{\ell}$
it is convenient to perform our computations along a specific
geodesic. In particular, we focus on the equatorial line $x=(\pi
/2,\phi )$, $y=(\pi /2,0)$ and we work with the orthogonal frames
\begin{equation} \label{orthogonalf}
\left\{e_{1}^{x}=\frac{\partial }{\partial \theta _{x}},\;e_{2}^{x}=\frac{
\partial }{\partial \varphi _{x}}\right\},\hspace{0.5cm}\left\{e_{1}^{y}=\frac{
\partial }{\partial \theta _{y}},\;e_{2}^{y}=\frac{\partial }{\partial
\varphi _{y}}\right\}.
\end{equation}

 Let $\Delta_{\ell}(\phi)$ be the conditional covariance matrix of the scaled Gaussian vector
 \begin{equation*}
\frac{\sqrt 8}{ \lambda_{\ell}} (\nabla ^{2}f_{\ell }(x),\nabla ^{2}f_{\ell }(y)\big| \nabla f_{\ell}(x)=\nabla f_{\ell }(y)={\bf 0}).
\end{equation*}
With the choice \eqref{orthogonalf} the covariance matrix $\Delta_{\ell}(\phi)$ is of the following form
\begin{equation*} %\label{1agosto}
\Delta _{\ell }(\phi )=\left(
\begin{array}{cc}
\Delta _{1,\ell }(\phi ) & \Delta _{2,\ell }(\phi ) \\
\Delta _{2,\ell }(\phi ) & \Delta _{1,\ell }(\phi )%
\end{array}%
\right),
\end{equation*}%
where
\begin{align} \label{D1}
\Delta _{1,\ell }(\phi )&=\left(
\begin{array}{ccc}
3-\frac{16\beta _{2,\ell }^{2} (\phi)}{\lambda _{\ell } (\lambda _{\ell
}^{2}-4\alpha _{2,\ell }^{2} (\phi))}-\frac{2}{\lambda _{\ell }} & 0 & 1-\frac{%
16\beta _{2,\ell } (\phi) \beta _{3,\ell }(\phi)}{\lambda _{\ell }(\lambda _{\ell
}^{2}-4\alpha _{2,\ell }^{2}(\phi))}+\frac{2}{\lambda _{\ell }} \\
0 & 1-\frac{16\beta _{1,\ell }^{2}(\phi)}{\lambda _{\ell } (\lambda _{\ell
}^{2}-4\alpha _{1,\ell }^{2}(\phi))} & 0 \\
1-\frac{16\beta _{2,\ell }(\phi)\beta _{3,\ell }(\phi)}{\lambda _{\ell } (\lambda
_{\ell }^{2}-4\alpha _{2,\ell }^{2}(\phi))}+\frac{2}{\lambda _{\ell }} & 0 & 3-%
\frac{16\beta _{3,\ell }^{2}(\phi)}{\lambda _{\ell } (\lambda _{\ell
}^{2}-4\alpha _{2,\ell }^{2}(\phi))}-\frac{2}{\lambda _{\ell }}%
\end{array}%
\right),
\end{align}%
\begin{align} \label{D2}
\Delta _{2,\ell }(\phi )&=\left(
\begin{array}{ccc}
8\frac{\gamma _{1,\ell }(\phi)+\frac{4\alpha _{2,\ell }(\phi)\beta _{2,\ell }^{2}(\phi)}{
4\alpha _{2,\ell }^{2}(\phi)-\lambda _{\ell }^{2}}}{\lambda _{\ell }^2} & 0 & 8\frac{\gamma _{3,\ell }(\phi)+\frac{4\alpha _{2,\ell }(\phi)\beta
_{2,\ell }(\phi)\beta _{3,\ell }(\phi)}{4\alpha _{2,\ell }^{2}(\phi)-\lambda _{\ell }^{2}}}{\lambda _{\ell }^2} \\
0 & 8\frac{\gamma _{2,\ell }(\phi)+\frac{4\alpha _{1,\ell }(\phi)\beta _{1,\ell }^{2}(\phi)}{%
4\alpha _{1,\ell }^{2}(\phi)-\lambda _{\ell }^{2}}}{\lambda _{\ell }^2} & 0 \\
8\frac{\gamma _{3,\ell }(\phi)+\frac{4\alpha _{2,\ell }(\phi)\beta _{2,\ell }(\phi)\beta
_{3,\ell }(\phi)}{4\alpha _{2,\ell }^{2}(\phi)-\lambda _{\ell }^{2}}}{\lambda _{\ell }^2} & 0 & 8\frac{\gamma _{4,\ell }(\phi)+\frac{4\alpha _{2,\ell
}(\phi) \beta _{3,\ell }^{2}(\phi)}{4\alpha _{2,\ell }^{2}(\phi)-\lambda _{\ell }^{2}}}{\lambda _{\ell }^2}%
\end{array}%
\right)
\end{align}
with
\begin{align*}
\alpha _{1,\ell }(\phi )&=P_{\ell }^{\prime }(\cos \phi ),\hspace{0.3cm}
\alpha _{2,\ell }(\phi )=-\sin ^{2}\phi P_{\ell }^{\prime \prime }(\cos \phi
)+\cos \phi P_{\ell }^{\prime }(\cos \phi ),
\end{align*}

\begin{align*}
\beta _{1,\ell }(\phi )&=\sin \phi P_{\ell }^{\prime \prime }(\cos \phi ),\hspace{0.3cm}
\beta _{2,\ell }(\phi )=\sin \phi \cos \phi P_{\ell }^{\prime \prime }(\cos
\phi )+\sin \phi P_{\ell }^{\prime }(\cos \phi ),\\
\beta _{3,\ell }(\phi )&=-\sin ^{3}\phi P_{\ell }^{\prime \prime \prime
}(\cos \phi )+3\sin \phi \cos \phi P_{\ell }^{\prime \prime }(\cos \phi
)+\sin \phi P_{\ell }^{\prime }(\cos \phi ),
\end{align*}

\begin{align*}
\gamma _{1,\ell }(\phi )&=(2+\cos ^{2}\phi )P_{\ell }^{\prime \prime }(\cos
\phi )+\cos \phi P_{\ell }^{\prime }(\cos \phi ), \hspace{0.3cm}
\gamma _{2,\ell }(\phi )=-\sin ^{2}\phi P_{\ell }^{\prime \prime \prime
}(\cos \phi )+\cos \phi P_{\ell }^{\prime \prime }(\cos \phi ), \\
\gamma _{3,\ell }(\phi )&=-\sin ^{2}\phi \cos \phi P_{\ell }^{\prime \prime
\prime }(\cos \phi )+(-2\sin ^{2}\phi +\cos ^{2}\phi )P_{\ell }^{\prime
\prime }(\cos \phi )+\cos \phi P_{\ell }^{\prime }(\cos \phi ), \\
\gamma _{4,\ell }(\phi )&=\sin ^{4}\phi P_{\ell }^{\prime \prime \prime
\prime }(\cos \phi )-6\sin ^{2}\phi \cos \phi P_{\ell }^{\prime \prime
\prime }(\cos \phi )+(-4\sin ^{2}\phi +3\cos ^{2}\phi )P_{\ell }^{\prime \prime }(\cos \phi
)+\cos \phi P_{\ell }^{\prime }(\cos \phi ).
\end{align*}
For a proof of \eqref{D1} and \eqref{D2} we refer to \cite[Appendix A and Appendix B]{CMW}. We also introduce the vector $\bf{a}$ that collects
the perturbing elements of the covariance matrix $\Delta_{\ell}(\phi)$:
$$\mathbf{a}=\mathbf{a}_{\ell}(\phi)=(a_{1,\ell}(\phi), a_{2,\ell}(\phi),a_{3,\ell}(\phi),a_{4,\ell}(\phi),a_{5,\ell}(\phi),a_{6,\ell}(\phi),a_{7,\ell}(\phi),a_{8,\ell}(\phi))$$
with $a_{i,\ell}(\phi)$, $i=1,\dots,8$, defined by
\begin{align*}
\Delta _{1,\ell }(\phi )=\left(
\begin{array}{ccc}
3+a_{1,\ell }(\phi ) & 0 & 1+a_{4,\ell }(\phi ) \\
0 & 1+a_{2,\ell }(\phi ) & 0 \\
1+a_{4,\ell }(\phi ) & 0 & 3+a_{3,\ell }(\phi )
\end{array}
\right), \;\;\;\;
\Delta _{2,\ell }(\phi )=\left(
\begin{array}{ccc}
a_{5,\ell }(\phi ) & 0 & a_{8,\ell }(\phi ) \\
0 & a_{6,\ell }(\phi ) & 0 \\
a_{8,\ell }(\phi ) & 0 & a_{7,\ell }(\phi )%
\end{array}%
\right).
\end{align*}\\

\noindent In what follows, with a slight abuse of notation, we write the conditional covariance matrix
$\Delta_{\ell}(\phi)$ as a function of $\mathbf{a}$
\begin{equation*}
\Delta_{\ell}(\phi)=\Delta (\mathbf{a}_{\ell}(\phi))=\Delta(\mathbf{a})=\left(
\begin{array}{cc}
\Delta _{1}(\mathbf{a}) & \Delta _{2}(\mathbf{a}) \\
\Delta _{2}(\mathbf{a}) & \Delta _{1}(\mathbf{a})%
\end{array}
\right),
\end{equation*}
where
\begin{equation*}
\Delta _{1}(\mathbf{a})=\left(
\begin{array}{ccc}
3+a_{1} & 0 & 1+a_{4} \\
0 & 1+a_{2} & 0 \\
1+a_{4} & 0 & 3+a_{3}%
\end{array}
\right), \;\; \;\;\;\;\;
\Delta _{2}(\mathbf{a})=\left(
\begin{array}{ccc}
a_{5} & 0 & a_{8} \\
0 & a_{6} & 0 \\
a_{8} & 0 & a_{7}%
\end{array}%
\right).
\end{equation*}\\

 At this point we may write the $2$-point correlation function $K_{2,\ell}$ in \eqref{eq:K2 glob gen} as a function
of the perturbing elements $a_{i,\ell}(\phi)$, $i=1,\dots,8$ of the covariance matrix:
\begin{equation*}
K_{2,\ell }(\phi) = \frac{\lambda_{\ell}^4}{8^2} \frac{1}{(2 \pi)^{2} \sqrt{{\rm det}(A_{\ell}(\phi))}} q (\mathbf{a}_{\ell}(\phi)),
\end{equation*}
where $A_{\ell}(\phi)$ is the covariance matrix of the Gaussian random vector $(\nabla f_{\ell}(x), \nabla f_{\ell }(y))$
\begin{equation*}
A_{\ell }(\phi )=\left(
\begin{array}{cccc}
\frac{\lambda _{\ell }}{2} & 0 & \alpha _{1,\ell }(\phi ) & 0 \\
0 & \frac{\lambda _{\ell }}{2} & 0 & \alpha _{2,\ell }(\phi ) \\
\alpha _{1,\ell }(\phi ) & 0 & \frac{\lambda _{\ell }}{2} & 0 \\
0 & \alpha _{2,\ell }(\phi ) & 0 & \frac{\lambda _{\ell }}{2}
\end{array}
\right),
\end{equation*}
see \cite[Appendix B]{CMW}, and $q (\mathbf{a}_{\ell}(\phi))$ is the conditional expectation
\begin{align*}
q (\mathbf{a}_{\ell}(\phi))&= \frac{1}{(2 \pi)^3 \sqrt{{\rm det}(\Delta_{\ell}(\phi) )}} \iint_{\R^3 \times \R^3} |z_1 z_3 - z_2^2| \cdot |w_1 w_3 - w_2^2| \\
&\;\; \times \exp \Big\{- \frac 1 2 (z_1,z_2,z_3, w_1,w_2,w_3 ) \Delta_{\ell}(\phi)^{-1} (z_1,z_2,z_3, w_1,w_2,w_3 )^t \Big\} d z_1 dz_2 dz_3 dw_1 d w_2 d w_3.
\end{align*}
The determinant of $A_{\ell}(\phi)$ can be easily computed so that we obtain
\begin{equation} \label{eq:=K2=*q}
K_{2,\ell }(\phi) = \frac{\lambda_{\ell}^4}{8^2} \frac{1}{ \pi ^{2} \sqrt{(\lambda _{\ell }^{2}-4\alpha _{2,\ell }^{2}(\phi
))(\lambda _{\ell }^{2}-4\alpha _{1,\ell }^{2}(\phi )) }} q (\mathbf{a}_{\ell}(\phi)).
\end{equation}

\subsection{Taylor expansion of the two-point correlation function} \label{sec-taylor}

To study the asymptotic behaviour of the variance
in the long-range regime, we investigate now the asymptotic
behaviour of \eqref{afkao2}, i.e., the high energy asymptotic
behaviour of
\begin{align} \label{sitt}
 \frac{\lambda_{\ell}^2}{8} \int_{C/\ell}^{\pi - C/\ell} \frac{\sin \phi}{ \sqrt{(1-4\alpha _{2,\ell }^{2}(\phi)/\lambda _{\ell }^{2})(1-4\alpha _{1,\ell }^{2}(\phi )/\lambda _{\ell }^{2})} }
 q (\mathbf{a}_{\ell}(\phi)) d \phi - (\E[{\cal N}^c (f_{\ell})])^2.
\end{align}
In the range $\phi \in (C/\ell, \pi- C/\ell)$ the conditional covariance matrix $\Delta_{\ell}(\phi)=\Delta({\mathbf a})$ %of the scaled Gaussian vector
% \begin{equation*}
%\frac{\sqrt 8}{ \lambda_{\ell}} (\nabla ^{2}f_{\ell }(x),\nabla ^{2}f_{\ell }(y)\big| \nabla f_{\ell}(x)=\nabla f_{\ell }(y)={\bf 0}),
%\end{equation*}
is a small perturbation of the $6 \times 6$ matrix $U$ where
\begin{equation*}
U=\left(
\begin{array}{cc}
U_1 & {\mathbf 0} \\
{\mathbf 0} & U_1
\end{array}
\right), \hspace{1cm}
U_1=\left(
\begin{array}{ccc}
3 & 0 & 1 \\
0 & 1 & 0 \\
1 & 0 & 3
\end{array}
\right).
\end{equation*}
The elements $a_i$, $i=1,\dots, 8$ are in fact uniformly small for $\phi \in ( C/\ell, \pi- C/\ell)$, see Lemma \ref{perturbing} below. 
Consequently we may use perturbation theory \cite[Theorem 1.5]{kato} to
yield that the Gaussian expectation $q$ is an analytic functions of the perturbing elements $a_i$, $i=1,\dots, 8$ and we can expand it into a Taylor polynomial around $\mathbf{a}=0$ as follows: 

\begin{align}\label{noemi}
& \int_{C/\ell}^{\pi - C/\ell} \frac{\sin \phi}{ \sqrt{(1-4\alpha _{2,\ell }^{2}(\phi)/\lambda _{\ell }^{2})(1-4\alpha _{1,\ell }^{2}(\phi )/\lambda _{\ell }^{2})} } q (\mathbf{a}_{\ell}(\phi)) d \phi \nonumber \\
& = A_{0,\ell } \; q
(\mathbf{0})+\sum_{i_1=1}^{8} A_{i_1,\ell
}\;\Big[\frac{\partial }{\partial a_{i_1} }q (\mathbf{a} )\Big]_{\mathbf{a}=\mathbf{0}}
+\frac{1}{2}\sum_{i_1,i_2=1}^{8}A_{i_1 i_2,\ell }\;\Big[\frac{\partial
^{2}}{\partial a_{i_1} \partial a_{i_2}}q (\mathbf{a}
)\Big]_{\mathbf{a}=\mathbf{0}} \nonumber \\
&\;\;+\frac{1}{3!}\sum_{i_1,i_2,i_3=1}^{8}A_{i_1 i_2 i_3,\ell }\;\Big[\frac{\partial
^{3}}{\partial a_{i_1} \partial a_{i_2} \partial a_{i_3}}q (\mathbf{a}
)\Big]_{\mathbf{a}=\mathbf{0}} +\frac{1}{4!}\sum_{i_1,i_2,i_3,i_4=1}^{8}A_{i_1 i_2 i_3 i_4,\ell }\;\Big[\frac{\partial
^{4}}{\partial a_{i_1} \partial a_{i_2} \partial a_{i_3} \partial a_{i_4} }q (\mathbf{a}
)\Big]_{\mathbf{a}=\mathbf{0}} \nonumber \\
&\;\; +\int_{C/\ell }^{\pi -C/\ell}\frac{\sin \phi
}{\sqrt{(1-4\alpha _{2,\ell }^{2}(\phi )/\lambda _{\ell
}^{2})(1-4\alpha _{1,\ell }^{2}(\phi )/\lambda _{\ell
}^{2})}}O(||\mathbf{a}||^{5})\;d\phi,
\end{align}
where we adopted the following notation: for $i_1,i_2,i_3,i_4=1,\dots, 8$,
\begin{align*}
A_{0,\ell }& =\int_{C/\ell }^{\pi -C/\ell }\frac{1 }{\sqrt{%
(1-4\alpha _{2,\ell }^{2}(\phi )/\lambda _{\ell }^{2})(1-4\alpha _{1,\ell
}^{2}(\phi )/\lambda _{\ell }^{2})}}\sin \phi \; d\phi , \\
A_{i_1,\ell }& =\int_{C/\ell }^{\pi -C/\ell }\frac{a_{i_1,\ell }(\phi )}{\sqrt{%
(1-4\alpha _{2,\ell }^{2}(\phi )/\lambda _{\ell }^{2})(1-4\alpha _{1,\ell
}^{2}(\phi )/\lambda _{\ell }^{2})}}\sin \phi \;d\phi, \\
A_{i_1, \dots i_k,\ell }& =\int_{C/\ell }^{\pi -C/\ell }\frac{a_{i_1,\ell }(\phi
) \cdots a_{i_k,\ell }(\phi )}{\sqrt{(1-4\alpha _{2,\ell }^{2}(\phi )/\lambda _{\ell
}^{2})(1-4\alpha _{1,\ell }^{2}(\phi )/\lambda _{\ell }^{2})}}\sin \phi
\;d\phi, \hspace{0.5cm} k=2,3,4. %\\
%A_{ijk,\ell }& =\int_{C/\ell }^{\pi -C/\ell }\frac{a_{i,\ell }(\phi
%)a_{j,\ell }(\phi ) a_{k,\ell }(\phi ) }{\sqrt{(1-4\alpha _{2,\ell }^{2}(\phi )/\lambda _{\ell
%}^{2})(1-4\alpha _{1,\ell }^{2}(\phi )/\lambda _{\ell }^{2})}}\sin \phi
%\;d\phi, \\
%A_{ijkl,\ell }& =\int_{C/\ell }^{\pi -C/\ell }\frac{a_{i,\ell }(\phi
%)a_{j,\ell }(\phi ) a_{k,\ell }(\phi ) a_{l,\ell }(\phi ) }{\sqrt{(1-4\alpha _{2,\ell }^{2}(\phi )/\lambda _{\ell
%}^{2})(1-4\alpha _{1,\ell }^{2}(\phi )/\lambda _{\ell }^{2})}}\sin \phi
%\;d\phi,
\end{align*}
Note that to obtain the exact asymptotic behaviour of the variance of the total number
of critical point we need to sharpen the bounds obtained in \cite{CMW}; for this reason we have expanded
$q$ in \eqref{noemi} up to order four (instead of order three as in \cite{CMW}).

\subsection{Asymptotics for the two-point correlation function} \label{sec-asymp}
We now study the decay rate of $A_{i_1, \dots i_k,\ell }$.
In particular, we improve the bounds obtained in \cite[Lemma 4.3 and Lemma 4.4]{CMW} for the $A_{i_1, \dots i_k,\ell }$ to $O(\ell^2)$.
Such refinement requires a more careful investigation of the tail decay of the perturbing elements $\bf{a}_{\ell}(\phi)$ of $\Delta_{\ell}(\phi)$ that are expressed in terms of the first four derivatives of Legendre polynomials as shown in \eqref{D1}-\eqref{D2}.

The tail decay, for $\ell \to \infty$, of the first four derivatives of Legendre polynomials, is derived in Appendix \ref{horror} using the high degree asymptotics of the Legendre polynomials and their derivatives, i.e., Hilb asymptotics. In particular, to improve the bounds obtained in \cite{CMW}, we apply here a more general version of the Hilb asymptotic derived in \cite[Lemma 1]{frenzen&wong} (see also \cite[Theorem 8.21.5]{szego}).

 All the work for establishing the asymptotics of the perturbing elements $\bf{a}_{\ell}(\phi)$ (see Lemma \ref{perturbing} in the next section) leads to the high energy asymptotic behaviour of the terms $A_{i_1, \dots i_k,\ell }$ in Lemma \ref{L-zero} and Lemma \ref{L-uno}. In particular we see that the main contribution
to the $A_{i_1, \dots i_k,\ell }$ comes from the leading non-oscillatory terms in the Taylor expansion \eqref{noemi}, so we obtain Lemma \ref{L-zero} and Lemma \ref{L-uno}
by bounding the contribution of the oscillatory terms and error terms.\\

 We first show that the first term in the expansion \eqref{noemi} cancels out the squared expectation in \eqref{sitt}:
\begin{lemma} \label{L-zero}
As $\ell \to \infty$, 
\begin{align*}
\frac{\lambda_{\ell }^{2} }{8} A_{0,\ell } \, q(\mathbf{0})-(\mathbb{E}\left[ {\cal N}^c(f_{\ell}) \right])^{2}= \frac{1}{8} \left[ 2 \ell^3 +\frac{2 \cdot 3^2}{\pi^2} \ell^2 \log \ell \right] \, q(\mathbf{0}) +O(\ell^2).
\end{align*}
\end{lemma}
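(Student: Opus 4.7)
The plan is to evaluate $(\E[\mathcal{N}^c(f_\ell)])^2$ and $\frac{\lambda_\ell^2}{8}A_{0,\ell}\,q(\mathbf{0})$ separately to precision $O(\ell^2)$ and subtract. By the single-point Kac--Rice formula together with the isotropy of $f_\ell$,
$$
\E[\mathcal{N}^c(f_\ell)] = 4\pi\cdot \frac{1}{\pi\lambda_\ell}\cdot \E\!\left[|\det H_{f_\ell}(x)| \,\big|\, \nabla f_\ell(x) = \mathbf{0}\right] = \frac{\lambda_\ell}{2}\,\eta + O(1),
$$
where $\eta=\E[|Z_1Z_3-Z_2^2|]$ for $(Z_1,Z_2,Z_3)\sim N(\mathbf{0},U_1)$, the single-point conditional covariance of the scaled Hessian $\tfrac{\sqrt 8}{\lambda_\ell}H_{f_\ell}(x)$ being only an $O(\lambda_\ell^{-1})$ perturbation of $U_1$ (traceable to the $\pm 2/\lambda_\ell$ entries appearing in $a_{1,\ell},a_{3,\ell},a_{4,\ell}$). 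Since $\Delta(\mathbf{0})=\mathrm{diag}(U_1,U_1)$ makes the two scaled Hessian blocks independent, $q(\mathbf{0})=\eta^2$, and therefore $(\E[\mathcal{N}^c(f_\ell)])^2 = \tfrac{\lambda_\ell^2}{4}q(\mathbf{0}) + O(\ell^2)$. Consequently the lemma reduces to establishing
$$\lambda_\ell^2\,(A_{0,\ell}-2) = 2\ell^3 + \tfrac{18}{\pi^2}\ell^2\log\ell + O(\ell^2).$$

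Write $X_i(\phi)=4\alpha_{i,\ell}^2(\phi)/\lambda_\ell^2$ and Taylor-expand the integrand of $A_{0,\ell}$:
$$
[(1-X_1)(1-X_2)]^{-1/2} = 1 + \tfrac{X_1+X_2}{2} + \tfrac{3X_1^2+3X_2^2+2X_1X_2}{8} + O(|X|^3).
$$
The boundary mismatch $\int_{C/\ell}^{\pi-C/\ell}\sin\phi\,d\phi - 2 = O(\ell^{-2})$ contributes $O(\ell^2)$ after multiplication by $\lambda_\ell^2$ and is absorbed. For the $\ell^3$-piece, use the Legendre ODE to rewrite $\alpha_{2,\ell}(\phi) = -\cos\phi\,P_\ell'(\cos\phi) + \lambda_\ell P_\ell(\cos\phi)$, so the dominant part of $X_2$ is $4P_\ell^2(\cos\phi)$; the orthogonality identity $\int_{-1}^1 P_\ell^2(u)\,du = 2/(2\ell+1)$ gives $\tfrac{\lambda_\ell^2}{2}\int\sin\phi\cdot 4P_\ell^2(\cos\phi)\,d\phi = \tfrac{4\lambda_\ell^2}{2\ell+1} = 2\ell^3 + O(\ell^2)$. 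The remaining pieces of $\int\sin\phi\,(X_1+X_2)/2\,d\phi$, coming from $X_1$ itself and from the subleading terms of $X_2$ involving $P_\ell'$, are $O(\lambda_\ell^{-1})$ by the standard identities $\int_{-1}^1 (P_\ell')^2\,du=\lambda_\ell$, $\int_{-1}^1 u^2 (P_\ell')^2\,du=\lambda_\ell(2\ell-1)/(2\ell+1)$, and $\int_{-1}^1 u P_\ell P_\ell'\,du = 1-1/(2\ell+1)$, contributing $O(\ell^2)$ after multiplication by $\lambda_\ell^2$.

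For the $\ell^2\log\ell$-piece only the $X_2^2\approx 16P_\ell^4(\cos\phi)$ term matters. Insert the Hilb asymptotic $P_\ell(\cos\phi) = \sqrt{2/(\pi(\ell+1/2)\sin\phi)}\cos((\ell+1/2)\phi - \pi/4) + O(\ell^{-3/2}\phi^{-1/2})$ and use $\cos^4\theta = 3/8 + \tfrac12\cos(2\theta) + \tfrac18\cos(4\theta)$: the non-oscillatory part of $\sin\phi\,P_\ell^4$ equals $\tfrac{3}{2\pi^2(\ell+1/2)^2\sin\phi}$ and integrates, via $\int\csc\phi\,d\phi=\log|\tan(\phi/2)|$, to
$$\tfrac{3}{2\pi^2(\ell+1/2)^2}\cdot 2\log\cot(C/(2\ell)) = \tfrac{3\log\ell}{\pi^2\ell^2} + O(\ell^{-2}).$$
Hence $\tfrac{3}{8}\int\sin\phi\,X_2^2\,d\phi = \tfrac{18\log\ell}{\pi^2\ell^2} + O(\ell^{-2})$, which upon multiplication by $\lambda_\ell^2$ yields $\tfrac{18}{\pi^2}\ell^2\log\ell + O(\ell^2)$. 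The oscillatory components of $P_\ell^4$, together with the $X_1^2$, $X_1 X_2$ and $O(|X|^3)$ Taylor-tail pieces, each integrate to $O(\ell^{-2})$ by integration by parts in $\phi$, absorbed into $O(\ell^2)$. The main obstacle is the uniform control of the oscillatory error terms near the integration endpoints $\phi\to 0,\pi$ where $\sin\phi$ vanishes and the single integration-by-parts bounds deteriorate; the refined Hilb asymptotic of \cite[Lemma 1]{frenzen&wong} quoted in Section \ref{sec-asymp} provides exactly the uniformity required to prevent these boundary-delicate integrals from contaminating the leading $\log\ell$ coefficient.
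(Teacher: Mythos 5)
Your argument is correct, and for the $\ell^{3}$ coefficient it takes a genuinely different route. The paper computes $A_{0,\ell}$ entirely through the Hilb asymptotics of Lemma~\ref{perturbing}: the $\ell^{3}$ term comes from the non-oscillatory part of $h_{0}^{2}(0)\cos^{2}\psi_{0,\ell}/(\ell\sin\phi)$ in \eqref{a2}. You instead use the Legendre ODE to write $\alpha_{2,\ell}(\phi)=\lambda_{\ell}P_{\ell}(\cos\phi)-\cos\phi\,P_{\ell}'(\cos\phi)$, so that the dominant part of $X_{2}=4\alpha_{2,\ell}^{2}/\lambda_{\ell}^{2}$ is $4P_{\ell}^{2}(\cos\phi)$, and then the orthogonality relation $\int_{-1}^{1}P_{\ell}^{2}=2/(2\ell+1)$ (together with $\int(P_{\ell}')^{2}=\lambda_{\ell}$, $\int u^{2}(P_{\ell}')^{2}=\lambda_{\ell}(2\ell-1)/(2\ell+1)$, $\int uP_{\ell}P_{\ell}'=1-1/(2\ell+1)$) pins the entire first-order Taylor contribution \emph{exactly}, giving $\tfrac{\lambda_{\ell}^{2}}{2}\int\sin\phi\cdot 4P_{\ell}^{2}=4\lambda_{\ell}^{2}/(2\ell+1)=2\ell^{3}+O(\ell^{2})$ with no oscillatory estimate needed. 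This is a cleaner mechanism for the $\ell^{3}$ piece than the paper's; both proofs then use the Hilb expansion plus $\cos^{4}\psi=\tfrac38+\tfrac12\cos2\psi+\tfrac18\cos4\psi$ identically for the $\ell^{2}\log\ell$ piece, and both rely on similar one-line bounds for the oscillatory and higher-order Taylor remainders. You are also more explicit than the paper about the cancellation of the constant term of $A_{0,\ell}$ (which is $2\cos(C/\ell)=2+O(\ell^{-2})$, not $\cos(C/\ell)$ as the paper's proof has it) against $(\E[\mathcal N^{c}])^{2}$: you correctly identify that this requires the sharper one-point estimate $\E[\mathcal N^{c}]=\tfrac{\lambda_{\ell}}{2}\mathcal I_{0}+O(1)=\tfrac{2}{\sqrt3}\ell(\ell+1)+O(1)$ rather than the looser $\tfrac{2}{\sqrt3}\ell^{2}+O(1)$ quoted in the introduction; the former is what the one-point Kac--Rice formula actually delivers and is what both proofs implicitly need. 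The trade-off of your approach is that it is ad hoc to the $A_{0,\ell}$ term, whereas the paper's uniform Hilb treatment runs in parallel with the estimates of $A_{i_{1}\dots i_{k},\ell}$ in Lemma~\ref{L-uno}.
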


\vspace{0.5cm}
 Then we study the high frequency asymptotic behaviour of the other terms:
\begin{lemma} \label{L-uno}
As $\ell \to \infty$, for $i \ne 3$, we have
%\begin{align*}
 $\lambda _{\ell }^{2} A_{i,\ell}=O(\ell^{2})$,
%\end{align*}
whereas for $i=3$, we get
\begin{align*}
 \lambda _{\ell }^{2} A_{3,\ell} = \left[ - 16 \ell^3 - \frac{2^5 \cdot 3 }{\pi^2}\ell^2 \log \ell \right] +O(\ell^{2}),
%, \hspace{0.5cm}
%2 \lambda _{\ell }^{2} A_{7,\ell
%} = 16 \sqrt{\frac 2 \pi} \sin(\frac{\ell \pi}{2}) \; \ell^{5/2} +O(\ell^{2}).
\end{align*}
%\end{lemma}
%\begin{lemma} \label{L-due}
for $(i,j) \ne (3,3), (7,7)$ we have
%\begin{align*}
 $\lambda _{\ell }^{2} A_{ij,\ell}=O(\ell^{2})$,
%\end{align*}
instead for $(i,j)=(3,3), (7,7)$ we have
\begin{align*}
 \lambda _{\ell }^{2} \frac 1 2 A_{33,\ell
} = \frac{3 \cdot 2^7}{\pi^2} \ell^2 \log \ell +O(\ell^{2}), \hspace{0.5cm}
 \lambda _{\ell }^{2} \frac 1 2 A_{77,\ell
} = \left[ 32 \ell^3 - \frac{ 2^6}{\pi^2} \ell^2 \log \ell \right] +O(\ell^{2}),
\end{align*}
%\end{lemma}
%\begin{lemma} \label{L-tre}
 for $(i,j,k)\ne(3,7,7)$ we have
%\begin{align*}
 $\lambda _{\ell }^{2} A_{ijk,\ell}=O(\ell^{2})$,
%\end{align*}
and
\begin{align*}
\lambda _{\ell }^{2} \frac {3}{3!} A_{377,\ell
} =- \frac{2^9}{ \pi^2} \ell^2 \log \ell +O(\ell^{2}),
\end{align*}
%\end{lemma}
%\begin{lemma} \label{L-quattro}
for $(i,j,k,l)\ne(7,7,7,7)$ we have $\lambda _{\ell }^{2} A_{ijkl,\ell}=O(\ell^{2})$, whereas
\begin{align*}
&\lambda _{\ell }^{2} \frac 1 {4!} A_{7777,\ell
} = \frac{ 2^{9}}{\pi^2} \ell^2 \log \ell +O(\ell^{2}).
\end{align*}
\end{lemma}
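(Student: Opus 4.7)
First, apply Lemma \ref{perturbing} to decompose each $a_{i,\ell}(\phi)$ in the bulk range $\phi\in(C/\ell,\pi-C/\ell)$ as
\[
a_{i,\ell}(\phi) = a_i^{\mathrm{ne}}(\phi;\ell) + a_i^{\mathrm{osc}}(\phi;\ell) + a_i^{\mathrm{err}}(\phi;\ell),
\]
where $a_i^{\mathrm{ne}}$ is a non-oscillatory amplitude with explicit $\phi$- and $\ell$-dependence inherited from the refined Hilb asymptotics of the first four derivatives of $P_\ell(\cos\phi)$, $a_i^{\mathrm{osc}}$ is a sum of fast oscillations of the form $\cos(2m\ell\phi+\theta)$ for $m\ge 1$, and $a_i^{\mathrm{err}}$ is a remainder of strictly smaller order. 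Expand the weight factor $\sin\phi\,[(1-4\alpha_{1,\ell}^{2}/\lambda_\ell^{2})(1-4\alpha_{2,\ell}^{2}/\lambda_\ell^{2})]^{-1/2}$ analogously, keeping sufficiently many terms to reach $O(\ell^{2})$ precision after the outer multiplication by $\lambda_\ell^{2}$. Multiply out the products $a_{i_1,\ell}(\phi)\cdots a_{i_k,\ell}(\phi)$ and collect, for each tuple, the oscillatory and non-oscillatory contributions.

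Dispose of the oscillatory integrals by a single integration by parts against the fast phase $\cos(2m\ell\phi+\theta)$: this produces a factor $1/(m\ell)$ together with boundary contributions at $\phi=C/\ell$ and $\phi=\pi-C/\ell$, controlled using the explicit endpoint amplitude bounds; the resulting total per tuple is $O(\ell^{-4})$, hence $O(\ell^{-2})$ after the $\lambda_\ell^{2}$ prefactor and $O(\ell^{2})$ after summation. The non-oscillatory integrals form the main contribution. For the distinguished tuples $(3)$, $(3,3)$, $(7,7)$, $(3,7,7)$ and $(7,7,7,7)$, a direct computation shows that the product of the $a_i^{\mathrm{ne}}$'s times $\sin\phi$ has a leading piece with a $(\sin\phi)^{-1}$ singularity, so that the integral reduces to
\[
\int_{C/\ell}^{\pi-C/\ell}\frac{d\phi}{\sin\phi}=2\log\ell+O(1),
\]
multiplied by an explicit rational constant yielding the stated $\ell^{2}\log\ell$ coefficients. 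For $i=3$ and for the pair $(7,7)$ the non-oscillatory amplitude additionally contains a piece of size $O(1/\ell)$ that is regular at the endpoints and integrates against $\sin\phi\,d\phi$ to a nonzero constant of order $1/\ell$, producing the extra $\ell^{3}$ contributions after multiplication by $\lambda_\ell^{2}\sim\ell^{4}$. For every other tuple, inspection shows the non-oscillatory amplitude is either identically zero or gives an integrand that is integrable at both endpoints, contributing at most $O(\ell^{2})$.

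The main obstacle will be the combinatorial bookkeeping: pushing the Hilb-type asymptotics through the chain $\alpha_{i,\ell},\beta_{i,\ell},\gamma_{i,\ell}\mapsto a_{i,\ell}\mapsto a_{i_1,\ell}(\phi)\cdots a_{i_k,\ell}(\phi)$, carefully tracking phases and prefactors, and verifying that only the five listed tuples generate a non-oscillatory $(\sin\phi)^{-1}$ singularity while all other contributions remain below the $\ell^{2}$ threshold. Reaching the $O(\ell^{2})$ error in place of the $O(\ell^{5/2})$ of \cite{CMW} requires the sharper Hilb asymptotic of Frenzen--Wong applied in Appendix \ref{horror}, together with one extra order in the Taylor expansion of both the weight factor and the perturbing elements, so that the subleading oscillatory remainders and the $a_i^{\mathrm{err}}$ cross-contributions all stay within the admissible error.
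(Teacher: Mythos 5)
Your strategy is essentially the paper's: substitute the Hilb asymptotics of Lemma \ref{perturbing} into the $A_{i_1\ldots i_k,\ell}$, split each perturbing element and the weight factor into a non-oscillatory mean and a fast oscillation, and extract the $\ell^3$ and $\ell^2\log\ell$ coefficients for the distinguished tuples from the averages of $\sin^2\psi$, $\cos^2\psi$, $\sin^2\psi\cos^2\psi$, $\sin^4\psi$, $\cos^4\psi$ against the $(\ell\sin\phi)^{-1}$- and $(\ell^2\sin\phi)^{-1}$-type amplitudes, with $\int_{C/\ell}^{\pi-C/\ell}\csc\phi\,d\phi=2\log\ell+O(1)$ producing the logarithms — exactly what the paper does. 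One arithmetic slip should be corrected: since $\lambda_\ell^2\sim\ell^4$, an $O(\ell^{-4})$ oscillatory contribution per tuple would give $O(1)$, not $O(\ell^{-2})$, after the prefactor; the bound you actually need, and what the case-by-case computation delivers, is that the oscillatory part of each $A_{i_1\ldots i_k,\ell}$ is $O(\ell^{-2})$, so that $\lambda_\ell^2 A^{\mathrm{osc}}=O(\ell^2)$. Also note that "a single integration by parts" understates the variety of estimates required: for a tuple like $(4)$ the prefactor already carries $O(\ell^{-2})$ decay and one needs only an $O(1)$ bound on the oscillatory integral (substituting $u=(2\ell+1)\phi$), while for a tuple like $(7)$ one needs integration by parts together with the endpoint smallness $\sin^{1/2}(C/\ell)\sim\ell^{-1/2}$ to reach $O(\ell^{-2})$.
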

\noindent The proofs of Lemma \ref{L-zero} and Lemma \ref{L-uno} are postponed to Section \ref{auxiliary}.\\

 In view of Lemma \ref{L-zero} and Lemma \ref{L-uno}, we immediately see that, as $\ell \to \infty$, \eqref{sitt} has the following leading terms

\begin{align} \label{leading_var}
\text{Var}({\cal N}^c(f_{\ell}))&= \frac 1 8 \left[ 2 \ell^3 + \frac{2 \cdot 3^2}{ \pi^2} \ell^2 \log \ell \right] q(\mathbf{0})
+ \frac 1 8 \left[ - 16 \ell^3 - \frac{2^5 \cdot 3 }{\pi^2} \ell^2 \log \ell \right] \; \Big[\frac{\partial }{\partial a_{3} } q (\mathbf{a})\Big]_{\mathbf{a} =\mathbf{0}} \nonumber \\
&+ \frac 1 8 \left[ 32 \ell^3 - \frac{ 2^6}{\pi^2} \ell^2 \log \ell \right] \Big[\frac{\partial^2 }{\partial a_{7} \partial a_{7} } q (\mathbf{a} )\Big]_{\mathbf{a} =\mathbf{0}} + \frac 1 8 \left[ \frac{ 3 \cdot 2^7}{\pi^2} \ell^2 \log \ell \right] \Big[\frac{\partial^2 }{\partial a_{3} \partial a_{3} } q (\mathbf{a})\Big]_{\mathbf{a} =\mathbf{0}} \nonumber \\
&+ \frac 1 8 \left[ - \frac{ 2^9}{ \pi^2} \ell^2 \log \ell \right] \Big[\frac{\partial^3 }{\partial a_{3} \partial a_{7} \partial a_{7} } q (\mathbf{a})\Big]_{\mathbf{a} =\mathbf{0}}
+ \frac 1 8 \left[ \frac{ 2^9}{ \pi^2} \ell^2 \log \ell \right] \Big[\frac{\partial^4 }{\partial a_{7}\partial a_{7} \partial a_{7} \partial a_{7} } q (\mathbf{a})\Big]_{\mathbf{a} =\mathbf{0}} +O(\ell^2).
\end{align}

\vspace{0.4cm}

\subsection{Evaluation of the leading constant} \label{eval-const}

Let $Y=(Y_{1},Y_{2},Y_{3})$ be a centred jointly Gaussian random vector with covariance matrix
\begin{equation*}
U_1=\left(
\begin{array}{ccc}
3 & 0 & 1 \\
0 & 1 & 0 \\
1 & 0 & 3%
\end{array}%
\right),
\end{equation*}%
and let ${\cal I}_r$, $r=0,2,4$, be the Gaussian expectations of the form
$${\cal I}_r=\E[|Y_1 Y_3-Y_2^2| (Y_1-3 Y_3)^r].$$

The relevant derivatives in \eqref{leading_var} and $q(\mathbf{0})$ are evaluated in the following two lemmas. We first note that
\begin{align} \label{qhq0}
q(\mathbf{0}) = (\E[|Y_1 Y_3-Y_2^2| ])^2={\cal I}_0^2.
\end{align}
\begin{lemma} \label{der_q_hat}
One has
\begin{align}
 &\Big[\frac{\partial }{\partial a_{3} } q (\mathbf{a})\Big]_{\mathbf{a} =\mathbf{0}}=\frac{1}{2^3}[ - 3 {\cal I}_0^2+\frac{1}{2^3} {\cal I}_0 {\cal I}_2], \label{d_q_h} \\
 &\Big[\frac{\partial^2 }{\partial a_{7} \partial a_{7} } q (\mathbf{a} )\Big]_{\mathbf{a} =\mathbf{0}}=\frac{1}{2^6} [3 {\cal I}_0-\frac{1}{2^3} {\cal I}_2]^2,\\
&\Big[\frac{\partial^2 }{\partial a_{3} \partial a_{3} } q (\mathbf{a})\Big]_{\mathbf{a} =\mathbf{0}}=\frac{1}{2^{11}} [2^6 \cdot 3^2 {\cal I}_0^2- 2^4 \cdot 3 {\cal I}_0 {\cal I}_2+ \frac{1}{2^2} {\cal I}_0 {\cal I}_4+\frac{1}{2^2} {\cal I}_2^2], \\
&\Big[\frac{\partial^3 }{\partial a_{3} \partial a_{7} \partial a_{7} } q (\mathbf{a})\Big]_{\mathbf{a} =\mathbf{0}}=\frac{1}{2^{19}} [-2^{10} \cdot 3^4 {\cal I}_0^2+2^7 \cdot 3^4 {\cal I}_0 {\cal I}_2- 2^4 \cdot 3 {\cal I}_0 {\cal I}_4+ 2 {\cal I}_2 {\cal I}_4- 2^{5} \cdot 3^2 {\cal I}_2^2 ], \\
&\Big[\frac{\partial^4 }{\partial a_{7}\partial a_{7} \partial a_{7} \partial a_{7} } q (\mathbf{a})\Big]_{\mathbf{a} =\mathbf{0}} = \frac{1}{2^{24}} [2^6 \cdot 3^3 {\cal I}_0+{\cal I}_4- 2^4 \cdot 3^2 {\cal I}_2]^2. \label{d_q_h_1}
\end{align}
\end{lemma}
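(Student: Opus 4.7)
The plan is to differentiate $q(\mathbf{a}) = \E_{\Delta(\mathbf{a})}[G(z)G(w)]$, with $G(y) := |y_1 y_3 - y_2^2|$, by means of the standard relation between covariance-derivatives of a Gaussian density and spatial derivatives,
$$\frac{\partial \phi_{\Sigma}}{\partial \Sigma_{ij}} = \partial_{v_i}\partial_{v_j}\phi_{\Sigma} \quad (i \ne j), \qquad \frac{\partial \phi_{\Sigma}}{\partial \Sigma_{ii}} = \tfrac{1}{2}\partial_{v_i}^2 \phi_{\Sigma},$$
varying $\Sigma_{ij}$ symmetrically with $\Sigma_{ji}$ in the off-diagonal case. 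Reading off the block structure of $\Delta(\mathbf{a})$, the parameter $a_3$ affects only the diagonal entries $\Delta_{33}$ and $\Delta_{66}$, so $\partial/\partial a_3$ acts on $\phi_{\Delta}$ as $\tfrac{1}{2}(\partial_{z_3}^2 + \partial_{w_3}^2)$, whereas $a_7$ affects only the symmetric off-diagonal pair $\Delta_{36} = \Delta_{63}$, giving $\partial/\partial a_7 = \partial_{z_3}\partial_{w_3}$. Iterating these operators and integrating by parts move all spatial derivatives from $\phi_\Delta$ onto $F(z,w) := G(z)G(w)$.

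Evaluation at $\mathbf{a} = \mathbf{0}$ then simplifies dramatically: $\Delta(\mathbf{0}) = \mathrm{diag}(U_1, U_1)$ makes $z$ and $w$ independent copies of $Y \sim N(\mathbf{0}, U_1)$, so every resulting expectation factorizes into two single-$Y$ expectations. Concretely, $[\partial_{a_3} q]_{\mathbf{0}} = \E[G(Y)]\,\E[\partial_3^2 G(Y)]$, $[\partial_{a_7}^2 q]_{\mathbf{0}} = (\E[\partial_3^2 G(Y)])^2$, $[\partial_{a_7}^4 q]_{\mathbf{0}} = (\E[\partial_3^4 G(Y)])^2$, $[\partial_{a_3}\partial_{a_7}^2 q]_{\mathbf{0}} = \E[\partial_3^4 G(Y)]\,\E[\partial_3^2 G(Y)]$, and $[\partial_{a_3}^2 q]_{\mathbf{0}} = \tfrac{1}{2}\bigl({\cal I}_0\,\E[\partial_3^4 G(Y)] + (\E[\partial_3^2 G(Y)])^2\bigr)$. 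To evaluate the single-$Y$ expectations I would apply the Stein-type identity
$$\E[\partial_{i_1}\cdots\partial_{i_k}G(Y)] = \E\bigl[G(Y)\, h_{i_1 \cdots i_k}(Y)\bigr],$$
where $h_{i_1\cdots i_k}(y)$ is the multivariate Hermite polynomial generated by successive applications of the score $-U_1^{-1}y$. Using
$$U_1^{-1} = \tfrac{1}{8}\begin{pmatrix} 3 & 0 & -1 \\ 0 & 8 & 0 \\ -1 & 0 & 3 \end{pmatrix},$$
one has $(U_1^{-1}Y)_3 = (3Y_3 - Y_1)/8$ and $(U_1^{-1})_{33} = 3/8$, so only the combination $(Y_1 - 3Y_3)$ ever enters---precisely why the three moments ${\cal I}_0, {\cal I}_2, {\cal I}_4$ suffice. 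A direct expansion yields $\E[\partial_3^2 G(Y)] = \tfrac{1}{64}{\cal I}_2 - \tfrac{3}{8}{\cal I}_0$ and $\E[\partial_3^4 G(Y)] = \tfrac{1}{4096}({\cal I}_4 - 144\,{\cal I}_2 + 1728\,{\cal I}_0)$, which substituted into the factorizations above reproduces \eqref{d_q_h}--\eqref{d_q_h_1} by elementary algebra.

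The main obstacle is that $G(y)$ is not smooth along the singular variety $\{y_1 y_3 = y_2^2\}$, so the higher-order spatial derivatives appearing above are only distributional. This is handled by mollification: replace $|\cdot|$ by $|\cdot|_{\varepsilon} := \sqrt{(\cdot)^2 + \varepsilon^2}$, carry out the entire argument with the smooth surrogate $G_\varepsilon$, and let $\varepsilon \to 0^+$ via dominated convergence, which is legitimate since the singular set has zero Lebesgue measure, the Gaussian tails guarantee integrability, and the derivatives of $G_\varepsilon$ admit polynomial-in-$Y$ majorants uniformly in $\varepsilon$. A secondary bookkeeping issue is that $(\partial/\partial a_3)^2$ generates a mixed term $\partial_{z_3}^2\partial_{w_3}^2 F$ in addition to $\partial_{z_3}^4 F$ and $\partial_{w_3}^4 F$; but the block-diagonality of $\Delta(\mathbf{0})$ converts it into a product of already-computed one-variable expectations, so no genuinely new quantity arises.
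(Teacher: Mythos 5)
Your proposal is substantively correct and in fact reproduces all five identities, including the numerical constants (I have verified each). The underlying idea is the same as the paper's: differentiate the Gaussian density with respect to the covariance entries and push the derivative through the integral. Where the paper merely invokes ``Leibniz integral rule and a computer-oriented computation,'' you make the calculation transparent and hand-checkable by (i) identifying the covariance-derivatives $\partial_{a_3},\partial_{a_7}$ with spatial Laplacian-type operators via Plackett's identity, (ii) exploiting the block-diagonality of $\Delta(\mathbf{0})$ to factorize into single-$Y$ expectations, and (iii) reducing those to ${\cal I}_0,{\cal I}_2,{\cal I}_4$ via the Hermite/Stein machinery with $(U_1^{-1}Y)_3=(3Y_3-Y_1)/8$. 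This is a genuine improvement in legibility.

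One detail to repair: the mollification paragraph overreaches. You claim the $\varepsilon$-regularized derivatives of $G_\varepsilon$ admit polynomial majorants uniformly in $\varepsilon$; that is false for third and fourth derivatives, which blow up like negative powers of $\varepsilon$ near the singular set $\{y_1y_3=y_2^2\}$, so dominated convergence in that form does not apply. However, the step that needed justifying — moving spatial derivatives from $\phi_\Delta$ onto $F=G(z)G(w)$ and then back — is a detour that can simply be eliminated. Once Plackett gives $\partial_{a_3}^j\partial_{a_7}^k\phi_\Delta$ as a combination of $\partial_{z_3}^p\partial_{w_3}^q\phi_\Delta$, and you write $\partial_{z_3}^p\phi_{U_1}=H_p(z)\,\phi_{U_1}$ for the appropriate Hermite polynomial, every resulting integral is of the form $\int G(z)H_p(z)\,\phi_{U_1}(z)\,dz$, in which $G$ is never differentiated. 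The only interchanges that need justifying are $\partial_{\mathbf{a}}$ past $\int$, which hold because $\phi_{\Delta(\mathbf{a})}$ and all its $\mathbf{a}$-derivatives decay Gaussianly in $(z,w)$ uniformly for $\mathbf{a}$ near $\mathbf{0}$ (by the Kato analyticity the paper cites), and $G$ has polynomial growth. Dropping the IBP-onto-$F$/mollification paragraph and replacing it with this direct route closes the gap without changing anything else.
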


\noindent Substituting \eqref{qhq0} and \eqref{d_q_h}-\eqref{d_q_h_1} into \eqref{leading_var} we obtain the following simple form for the variance
\begin{align} \label{vvv}
\text{Var}({\cal N}^c(f_{\ell}))&= \frac{[{\cal I}_2- 2^3 \cdot 5 \; {\cal I}_0]^2}{2^{10}} \ell^3 + \frac{[2^6 \cdot 3 \cdot 17 \; {\cal I}_0-2^4 \cdot 11\; {\cal I}_2+{\cal I}_4]^2}{2^{18}\, \pi^2 } \ell^2 \log \ell +O(\ell^2).
\end{align}

In the next lemma we compute the Gaussian expectations ${\cal I}_r$, $r=0,2,4$.

\begin{lemma} \label{G-e}
One has
$$
{\cal I}_0=\frac{2^2}{\sqrt 3}, \hspace{1cm} {\cal I}_2=\frac{2^5 \cdot 5}{\sqrt 3}, \hspace{1cm} {\cal I}_4=\frac{2^8 \cdot 5^2 \cdot 7}{3 \sqrt 3}.
$$
\end{lemma}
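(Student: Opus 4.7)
\textbf{Proof proposal for Lemma \ref{G-e}.} The plan is to orthogonally transform the vector $Y$ to i.i.d.\ standard Gaussians, expand the polynomial factor by the binomial theorem, and reduce each $\mathcal{I}_r$ to a finite linear combination of elementary integrals of the form $\mathbb{E}\bigl[|2U-V|\,U^a V^b\bigr]$ with $U\sim\chi^2_1$ and $V\sim\chi^2_2$ independent.

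First, since $Y_2$ is independent of $(Y_1,Y_3)$ and the $2\times 2$ covariance of $(Y_1,Y_3)$ has eigenvalues $4$ and $2$ with eigenvectors $(1,\pm 1)/\sqrt 2$, I would write $Y_1=\sqrt 2\,X_1+X_2$, $Y_3=\sqrt 2\,X_1-X_2$, $Y_2=X_3$ for i.i.d.\ standard Gaussians $X_1,X_2,X_3$. A direct calculation gives
$$
Y_1Y_3-Y_2^2 \;=\; 2X_1^2-X_2^2-X_3^2, \qquad Y_1-3Y_3 \;=\; -2\sqrt 2\,X_1+4X_2,
$$
so that $\mathcal{I}_r = \mathbb{E}\bigl[\,|2X_1^2-X_2^2-X_3^2|\,(-2\sqrt 2\,X_1+4X_2)^r\,\bigr]$.

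Second, the factor $|2X_1^2-X_2^2-X_3^2|$ is even in $X_1$ and in $X_2$ separately, so the binomial expansion of $(-2\sqrt 2\,X_1+4X_2)^r$ retains only the monomials of degree even in $X_1$ and even in $X_2$. Passing to polar coordinates $(X_2,X_3)=(R\cos\theta,R\sin\theta)$, so that $R^2\sim\chi^2_2$ and $\theta$ is uniform on $[0,2\pi]$, both independent of $X_1$, and averaging $X_2^{2k}=R^{2k}\cos^{2k}\theta$ in $\theta$ via $\frac{1}{2\pi}\int_0^{2\pi}\cos^{2k}\theta\,d\theta=\binom{2k}{k}/4^k$, every $\mathcal{I}_r$ becomes a finite sum of integrals
$$
J(a,b) \;:=\; \mathbb{E}\bigl[\,|2U-V|\,U^a V^b\,\bigr], \qquad U=X_1^2,\;\;V=R^2.
$$
Concretely, $r=0$ yields $J(0,0)$; $r=2$ yields $8\,J(1,0)+16\cdot\tfrac12\,J(0,1)$; and $r=4$ yields $64\,J(2,0)+768\cdot\tfrac12\,J(1,1)+256\cdot\tfrac38\,J(0,2)$.

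Third, each $J(a,b)$ is evaluated by splitting the inner $V$-integration at $V=2U$: the truncated integrals $\int_0^{2u}v^j e^{-v/2}dv$ and $\int_{2u}^\infty v^j e^{-v/2}dv$ are computed in closed form by iterated integration by parts, producing a polynomial in $u$ plus a $u$-polynomial times $e^{-u}$. Integrating the result in $U$ with density $\frac{1}{\sqrt{2\pi u}}e^{-u/2}$ reduces everything to half-integer Gamma evaluations $\int_0^\infty u^{s}e^{-u/2}du=\Gamma(s+1)\,2^{s+1}$ and $\int_0^\infty u^{s}e^{-3u/2}du=\Gamma(s+1)(2/3)^{s+1}$; the uniform factor $1/\sqrt 3$ in the stated constants arises precisely from the $e^{-3u/2}$ integrals, while the polynomial-in-$u$ contributions produce the cancellations responsible for the clean values $2^2/\sqrt 3$, $2^5\cdot 5/\sqrt 3$, $2^8\cdot 5^2\cdot 7/(3\sqrt 3)$. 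The main obstacle is purely bookkeeping: for $r=4$ one must track three $J(a,b)$'s, each split into two regions and each yielding several half-integer Gamma terms, so the cancellations between the ``polynomial'' and ``$e^{-u}$-weighted'' parts require care, but no conceptual novelty.
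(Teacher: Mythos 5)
Your proposal is correct, and its starting point is the same as the paper's: you diagonalize the $(Y_1,Y_3)$ covariance $\begin{pmatrix}3&1\\1&3\end{pmatrix}$ along the eigendirections $(1,\pm 1)/\sqrt2$ (eigenvalues $4$ and $2$), which is exactly what the paper does implicitly by passing to $W_1=Y_1$, $W_2=Y_2$, $W_3=Y_1+Y_3$. Indeed, the paper's conditional representation $\E\bigl[\,|t^2/4-Z_1^2-Z_2^2|\,(4Z_1-t)^r\,\big|\,W_3=t\bigr]$ is your formula under the substitution $t=2\sqrt2\,X_1$, $Z_1=X_2$, $Z_2=X_3$. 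Where the two arguments genuinely diverge is in the treatment of the polynomial factor for $r=2,4$. The paper keeps $Z_1$ and $\zeta=Z_1^2+Z_2^2$ together and invokes the joint density $f_{(\xi,\zeta)}(u,v)=\tfrac{1}{2\pi}\,e^{-v/2}(v-u^2)^{-1/2}\ind_{\{0\le u^2<v\}}$, then integrates first over $(u,v)$ and then over the Gaussian $t$. You instead binomially expand $(-2\sqrt2\,X_1+4X_2)^r$, kill the odd monomials by parity, and average $X_2^{2k}$ in $\theta$ via $\frac{1}{2\pi}\int_0^{2\pi}\cos^{2k}\theta\,d\theta=\binom{2k}{k}/4^k$, thereby reducing everything to the independent pair $U\sim\chi^2_1$, $V\sim\chi^2_2$ and the elementary integrals $J(a,b)=\E[|2U-V|U^aV^b]$. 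This sidesteps the $(Z_1,\zeta)$ joint density entirely; what it costs is the combinatorial bookkeeping of the $J(a,b)$ coefficients (which you state correctly: $J(0,0)$, $8J(1,0)+8J(0,1)$, and $64J(2,0)+384J(1,1)+96J(0,2)$). Both routes ultimately split at $v=2u$ (equivalently $v=t^2/4$), integrate by parts, and land on half-integer Gamma integrals with $e^{-u/2}$ and $e^{-3u/2}$ weights; the $1/\sqrt3$ in the final constants comes from the $e^{-3u/2}$ pieces exactly as you say. In short, same orthogonalization, but a cleaner reduction to independent chi-squares for the higher $r$ — a legitimate and arguably tidier computation than the paper's.
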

\noindent The proofs of Lemma \ref{der_q_hat} and Lemma \ref{G-e} are postponed to the next section. \\

The statement of Theorem \ref{th1} now follows upon substituting the values of ${\cal I}_r$, obtained in Lemma \ref{G-e}, into \eqref{vvv}.

%\begin{remark}
%Introducing the corresponding conditions on the Hessian in \eqref{eq:K2 glob gen} to separate the critical points into extrema and saddles and following the lines of the previous proof, it is easy to derive the analogous results for the variance of total number of extrema and saddles in \eqref{N} and \eqref{NN}. In fact, the main difference in the proof of \eqref{N} and \eqref{NN} consists of evaluating the leading constants in terms of Gaussian expectations of the form ${\cal I}^e_r$ and ${\cal I}^s_r$ for extrema and saddles respectively, where
%$${\cal I}^e_r=\E[|Y_1 Y_3-Y_2^2| \ind_{\{Y_1 Y_3-Y_2^2>0\}}(Y_1-3 Y_3)^r], \hspace{0.5cm} {\cal I}^s_r=\E[|Y_1 Y_3-Y_2^2| \ind_{\{Y_1 Y_3-Y_2^2<0\}}(Y_1-3 Y_3)^r], \hspace{0.3cm} r=0,2,4.$$
%\end{remark}

\section{Proofs of auxiliary lemmas} \label{auxiliary}

To prove Lemma \ref{L-zero} and Lemma \ref{L-uno} we first derive, in the next lemma, the asymptotic behaviour of the terms appearing in the perturbing elements of the covariance matrix $\Delta_{\ell}(\phi)$.

\begin{lemma} \label{perturbing}
Let $h_0(0)$, $h_0(1)$ and $h_{1}(0)$ be the constants $h_0(0)=\sqrt{\frac 2 \pi}$, $h_0(1)=-\frac 1 8 \sqrt{\frac 2 \pi}$, $h_{1}(0)= \sqrt{\frac 2 \pi}$ and let $\psi_{n,\ell+u}$ be the functions $\psi_{n,\ell+u}=(\ell+u+1/2) \phi- n \pi /2-\pi/4$ where $\ell \ge 1$, $n,u=0,1$, $\phi \in [C/\ell, \pi/2]$ and $C$ be any positive constant. We have the following estimates.

For $\alpha_{i,\ell}(\phi)$, $i=1,2$, we get
\begin{align}
\frac{\alpha^2_{1,\ell}(\phi)}{\ell^2 (\ell+1)^2}&=\phi^{-3} O(\ell^{-3}), \label{a1}\\
\frac{\alpha^2_{2,\ell}(\phi)}{\ell^2 (\ell+1)^2}&=h^2_0(0) \cos^2 \psi_{0,\ell} \frac{1}{\ell \sin \phi} + 2 h^2_0(0) \sin \psi_{0,\ell+1} \cos \psi_{0,\ell} \frac{1}{\ell^2 \sin^2 \phi}\label{a2} \\
&\;\;- h_0(0) h_0(1) \sin \psi_{0,\ell} \cos \psi_{0,\ell} \frac{1}{\ell^2 \phi \sin \phi} - h^2_0(0) \sin \psi_{0,\ell-1} \cos \psi_{0,\ell} \frac{1}{\ell^2 \sin^2 \phi} \nonumber\\
&\;\;+ h^2_0(0) \cos \phi \cos \psi_{0,\ell} \sin \psi_{0,\ell} \frac{1}{\ell^2 \sin^2 \phi}+\phi^{-1} O(\ell^{-2}), \nonumber \\
\frac{\alpha^4_{2,\ell}(\phi)}{\ell^4 (\ell+1)^4}&=h^4_0(0) \cos^4 \psi_{0,\ell} \frac{1}{\ell^2 \sin^2 \phi} + \phi^{-3} O(\ell^{-3}). \label{a3}
\end{align}

For $\beta_{i,\ell}(\phi)$, $i=1,2,3$, we have
\begin{align}
\frac{\beta^2_{1,\ell}(\phi)}{\ell^{3} (\ell+1)^{3}}&=\phi^{-3} O(\ell^{-3}), \label{b12}\\
\frac{\beta_{2,\ell}(\phi)}{\ell^{3/2} (\ell+1)^{3/2}}&=-h_0(0) \cos \psi_{0,\ell} \cos \phi \frac{1}{\ell^{1+1/2} \sin^{1+1/2} \phi}+ \phi^{-2-1/2} O(\ell^{-2-1/2}), \label{b2}\\
\frac{\beta^2_{2,\ell}(\phi)}{\ell^{3} (\ell+1)^{3}}&=\phi^{-3} O(\ell^{-3}),\label{b22}\\
\frac{\beta_{3,\ell}(\phi)}{\ell^{3/2} (\ell+1)^{3/2}}&=-h_0(0) \sin \psi_{0,\ell} \frac{1}{\ell^{1/2} \sin^{1/2} \phi} \sum_{j=0}^1 \binom{-1/2}{j} \frac{1}{2^j} \frac{1}{\ell^j} + \frac 3 2 h_0(0) \sin \psi_{0,\ell} \frac{1}{\ell^{1+1/2} \sin^{1/2} \phi} \label{b3} \\
&\;\;+3 h_0(0) \cos \psi_{0,\ell+1} \frac{1}{\ell^{1+1/2} \sin^{1+1/2} \phi}-h_0(0) \sin \psi_{0,\ell} \frac{1}{\ell^{1+1/2} \sin^{1/2}} \nonumber \\
&\;\;- h_0(1) \cos \psi_{0,\ell} \frac{1}{\ell^{1+1/2} \phi \sin^{1/2} \phi}+ h_1(0) \sin \psi_{1,\ell} A_1(\phi) \frac{1}{\ell^{1+1/2} \sin^{1/2} \phi} \nonumber \\
&\;\;-\frac{1}{2} h_0(0) (\cos \psi_{0,\ell+1} +5 \cos \psi_{0,\ell-1}) \frac{1}{\ell^{1+1/2} \sin^{1+1/2} \phi} \nonumber \\
&\;\;-3 h_0(0) \cos \phi \cos \psi_{0,\ell} \frac{1}{\ell^{1+1/2} \sin^{1+1/2} \phi} + \ell^{-2-1/2} O(\phi^{-2-1/2}), \nonumber \\
\frac{\beta^2_{3,\ell}(\phi)}{\ell^{3} (\ell+1)^{3}}&=h^2_0(0) \sin^2 \psi_{0,\ell} \frac{1}{\ell \sin \phi} - 6 h^2_0(0) \sin \psi_{0,\ell} \cos \psi_{0,\ell+1} \frac{1}{\ell^2 \sin^2 \phi} \label{b32} \\
&\;\;+ 2 h_0(0) h_0(1) \sin \psi_{0,\ell} \cos \psi_{0,\ell} \frac{1}{\ell^2 \phi \sin \phi}\nonumber \\
&\;\;-\frac{1}{4} h_0(0) h_1(0) \sin \psi_{0,\ell} \sin \psi_{1,\ell} \Big( \frac{1}{\ell^2 \sin^2 \phi} \cos \phi - \frac{1}{\ell^2 \phi \sin \phi} \Big) \nonumber\\
&\;\;+h^2_0(0) \sin \psi_{0,\ell} (\cos \psi_{0,\ell+1} +5 \cos \psi_{0,\ell-1}) \frac{1}{\ell^2 \sin^2 \phi} \nonumber \\
&\;\;+ 6 h^2_0(0) \sin \psi_{0,\ell} \cos \phi \cos \psi_{0,\ell} \frac{1}{\ell^2 \sin^2 \phi}+ \phi^{-1} O(\ell^{-2}), \nonumber \\
\frac{\beta^4_{3,\ell}(\phi)}{\ell^{6} (\ell+1)^{6}}&=h^4_0(0) \sin^4 \psi_{0,\ell} \frac{1}{\ell^2 \sin^2 \phi}+ \phi^{-3} O(\ell^{-3}) \label{b34},
\end{align}
\noindent where
$$
A_1(\phi)=\Big[ (\alpha+1)^2 -\frac 1 4 \Big] \Big( \frac{1-\phi \cot \phi}{2 \phi} \Big)- \frac{(\alpha+1)^2-\beta^2}{4} \tan \frac{\phi}{2}.
$$

And finally for $\gamma_{i,\ell}(\phi)$, $i=1,2,3,4$, we have
\begin{align}
\frac{\gamma_{1,\ell}(\phi)}{\ell^2 (\ell+1)^2}&=\phi^{-2-1/2} O(\ell^{-2-1/2}), \label{g1}\\
\frac{\gamma_{2,\ell}(\phi)}{\ell^2 (\ell+1)^2}&= - h_0(0) \sin \psi_{0,\ell} \frac{1}{\ell^{1+1/2} \sin^{1+1/2} \phi} + \phi^{-2-1/2} O(\ell^{-2-1/2}), \label{g2} \\
\frac{\gamma_{3,\ell}(\phi)}{\ell^2 (\ell+1)^2}&= - h_0(0) \sin \psi_{0,\ell} \cos \phi \frac{1}{\ell^{1+1/2} \sin^{1+1/2} \phi} + \phi^{-2-1/2} O(\ell^{-2-1/2}), \label{g3}\\
\frac{\gamma_{4,\ell}(\phi)}{\ell^2 (\ell+1)^2}&= h_0(0) \cos \psi_{0,\ell} \frac{1}{\ell^{1/2} \sin^{1/2} \phi} \sum_{j=0}^1 \binom{-1/2}{j} \frac{1}{2^j} \frac{1}{\ell^j} -2 h_0(0) \cos \psi_{0,\ell} \frac{1}{\ell^{1+1/2} \sin^{1/2} \phi} \label{g4}\\
&\;\;+ 4 \cos \psi_{1,\ell+1} \frac{1}{\ell^{1+1/2} \sin^{1+1/2} \phi} + h_0(0) \cos \psi_{0,\ell} \frac{1}{\ell^{1+1/2} \sin^{1/2} \phi} \nonumber \\
&\;\; -h_0(1) \sin \psi_{0,\ell} \frac{1}{\ell^{1+1/2} \phi \sin^{1/2} \phi} + h_1(0) \cos \psi_{1,\ell} A_1(\phi) \frac{1}{\ell^{1+1/2} \sin^{1/2} \phi} \nonumber \\
&\;\;- \frac 3 2 h_0(0) (\sin \psi_{0,\ell+1} +3 \sin \psi_{0,\ell-1}) \frac{1}{\ell^{1+1/2} \sin^{1+1/2} \phi}\nonumber \\
&\;\;-6 h_0(0) \cos \phi \sin \psi_{0,\ell} \frac{1}{\ell^{1+1/2} \sin^{1+1/2} \phi} + \phi^{-2-1/2} O(\ell^{-2-1/2}), \nonumber \\
 \frac{\gamma^2_{4,\ell}(\phi)}{\ell^4 (\ell+1)^4}&=h^2_0(0) \cos^2 \psi_{0,\ell} \frac{1}{\ell \sin \phi} +4 h_0(0) \cos \psi_{1,\ell+1} \cos \psi_{0,\ell} \frac{1}{\ell^2 \sin^2 \phi} \label{g42} \\
&\;\; - h_0(0) h_0(1)\sin \psi_{0,\ell} \cos \psi_{0,\ell} \frac{1}{\ell^2 \phi \sin \phi} - \frac 3 2 h^2_0(0) (\sin \psi_{0,\ell+1} +3 \sin \psi_{0,\ell-1}) \cos \psi_{0,\ell} \frac{1}{\ell^2 \sin^2 \phi} \nonumber \\
&\;\;-6 h^2_0(0) \cos \phi \sin \psi_{0,\ell} \cos \psi_{0,\ell} \frac{1}{\ell^2 \sin^2 \phi}+ \phi^{-1} O(\ell^{-2}),\nonumber\\
\frac{\gamma^3_{4,\ell}(\phi)}{\ell^6 (\ell+1)^6}&=h^3_0(0) \cos^3 \psi_{0,\ell} \frac{1}{\ell^{3/2} \sin^{3/2} \phi} +\phi^{-2-1/2} O(\ell^{-2-1/2}), \label{g43}\\
\frac{\gamma^4_{4,\ell}(\phi)}{\ell^8 (\ell+1)^8}&= h^4_0(0) \cos^4 \psi_{0,\ell} \frac{1}{\ell^{2} \sin^{2} \phi} +\phi^{-3} O(\ell^{-3}) \label{g44}.
\end{align}
\end{lemma}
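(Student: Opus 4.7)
The plan is to reduce all three families $\alpha_{i,\ell}, \beta_{i,\ell}, \gamma_{i,\ell}$ to a single source: pointwise asymptotics of $P_\ell(\cos\phi)$ and its first four $\phi$-derivatives. Writing $f(\phi) = P_\ell(\cos\phi)$, a direct chain-rule computation identifies
\begin{align*}
\alpha_{2,\ell}(\phi) = -f''(\phi), \qquad \beta_{3,\ell}(\phi) = f'''(\phi), \qquad \gamma_{4,\ell}(\phi) = f^{(4)}(\phi),
\end{align*}
while the remaining functions $\alpha_{1,\ell}, \beta_{1,\ell}, \beta_{2,\ell}, \gamma_{1,\ell}, \gamma_{2,\ell}, \gamma_{3,\ell}$ can each be written as a fixed linear combination of $f'(\phi), f''(\phi), f'''(\phi), f^{(4)}(\phi)$ divided by $\sin^{k}\phi$ for small $k$ (for instance $\alpha_{1,\ell} = -f'/\sin\phi$, and $\beta_{1,\ell} = [f'' - (\cos\phi/\sin\phi)f']/\sin\phi$). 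Hence the whole lemma follows once one has sufficiently sharp asymptotic expansions of $f^{(j)}(\phi)$ for $j=0,\dots,4$, uniformly on $[C/\ell, \pi/2]$.

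The key analytical input is the refined Hilb/Frenzen-Wong expansion of $P_\ell(\cos\phi)$ in terms of Bessel functions $J_0, J_1$ evaluated at $(\ell+1/2)\phi$, with explicit next-order coefficient $A_1(\phi)$ specialized to Jacobi parameters $\alpha=\beta=0$, combined with the classical large-argument asymptotic
\begin{align*}
J_n(z) = \sqrt{\frac{2}{\pi z}}\Bigl[\cos\bigl(z - \tfrac{n\pi}{2}-\tfrac{\pi}{4}\bigr)\sum_{j\ge 0}\frac{h_n(2j)}{z^{2j}} - \sin\bigl(z - \tfrac{n\pi}{2}-\tfrac{\pi}{4}\bigr)\sum_{j\ge 0}\frac{h_n(2j+1)}{z^{2j+1}}\Bigr],
\end{align*}
whose leading coefficients specialize to $h_0(0)=\sqrt{2/\pi}$, $h_0(1) = -\tfrac{1}{8}\sqrt{2/\pi}$, $h_1(0) = \sqrt{2/\pi}$. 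Substituting the Bessel expansion into Hilb gives $f(\phi)$ to enough orders; differentiating term-by-term in $\phi$ (noting that differentiating the oscillatory phase $\psi_{n,\ell+u} = (\ell+u+\tfrac12)\phi-n\pi/2-\pi/4$ multiplies the amplitude by $\ell+\tfrac12$ while sending $\cos\psi_{n,\cdot}\mapsto -\sin\psi_{n+1,\cdot}$) produces the corresponding expansions of $f', f'', f''', f^{(4)}$, each with leading amplitude of order $\ell^{j-1/2}\sin^{-1/2}\phi$, matching the normalizing factors $\ell^2(\ell+1)^2$, $\ell^{3/2}(\ell+1)^{3/2}$, etc., on the left-hand sides of \eqref{a1}-\eqref{g44}.

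Once the $f^{(j)}$ asymptotics are in hand, each individual identity in \eqref{a1}-\eqref{g44} becomes a bookkeeping exercise: substitute, simplify by elementary trigonometric identities, and read off the coefficients. The squared quantities $\alpha_{2,\ell}^2, \beta_{3,\ell}^2, \gamma_{4,\ell}^2$ require expanding the square and using product-to-sum formulas on products $\cos\psi_{n,\ell+u}\cos\psi_{n',\ell+u'}$ and their mixed variants; by a direct amplitude count, only the cross-term between the leading $\ell^{j-1/2}\sin^{-1/2}\phi$ piece and the first subleading correction survives above the stated $\phi^{-1}O(\ell^{-2})$ error level, producing precisely the $h_0(0)h_0(1)$ and $h_0(0)h_1(0)$ coefficients visible in \eqref{a2}, \eqref{b32}, \eqref{g42}. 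The quartic quantities are easier: only the fourth power of the leading amplitude contributes above the $\phi^{-3}O(\ell^{-3})$ threshold.

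The main obstacle is uniform control of the remainder down to $\phi\sim 1/\ell$. Because differentiating the Hilb expansion generates negative powers of $\sin\phi$, and the error bounds must be of the form $\phi^{-m}O(\ell^{-k})$ for prescribed $m,k$ uniformly across $[C/\ell,\pi/2]$, the coarser Szeg\H{o}-Hilb statement with only a $J_0$ term is not sufficient: it neither supplies the $h_0(1)$- and $h_1(0)$-dependent corrections that appear explicitly in \eqref{a2}, \eqref{b3}, \eqref{b32}, \eqref{g4}, \eqref{g42}, nor provides a remainder robust enough to survive four differentiations while maintaining the claimed bounds. The Frenzen-Wong refinement, with its explicit $J_1$ term and coefficient $A_1(\phi)$, is precisely what is needed, and the bulk of the effort is tracking these corrections consistently through all the derivative and squaring operations.
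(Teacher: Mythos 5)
Your structural observations are correct: the chain-rule identities $\alpha_{2,\ell}=-f''$, $\beta_{3,\ell}=f'''$, $\gamma_{4,\ell}=f^{(4)}$ with $f(\phi)=P_\ell(\cos\phi)$ check out, and the remaining $\alpha_{1,\ell},\beta_{1,\ell},\beta_{2,\ell},\gamma_{1,\ell},\gamma_{2,\ell},\gamma_{3,\ell}$ are indeed fixed combinations of $f',\dots,f^{(4)}$. You also correctly identify the Frenzen--Wong refinement (with its explicit $J_1$ term and coefficient $A_1$) as the necessary input for the $h_0(1)$- and $h_1(0)$-dependent corrections. However, the central step of your plan is to differentiate the Hilb/Frenzen--Wong expansion term-by-term in $\phi$ up to four times, and this is a genuine gap. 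The Frenzen--Wong remainder is only a pointwise bound $\sigma_m=\phi^m\,O(N^{-m-\alpha-1})$; an $O$-estimate on a function gives no control whatsoever on its derivatives, and after four differentiations the remainder could in principle swamp the claimed $\phi^{-k}O(\ell^{-r})$ error terms, especially on the critical scale $\phi\sim C/\ell$. Asymptotic series for special functions cannot be differentiated term-by-term without a separate argument (e.g.\ a Cauchy estimate on a complex neighborhood, a Markov--Bernstein inequality adapted to this family, or an independent asymptotic expansion for the derivative). Your proposal does not supply any such argument.

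The paper deliberately sidesteps this issue: instead of differentiating the expansion, it uses the algebraic recurrences in Lemma~\ref{P} (formulas \eqref{P'}--\eqref{P''''}) to express $P_\ell',\dots,P_\ell''''$ as explicit linear combinations, with singular prefactors $(x^2-1)^{-j}$, of the \emph{undifferentiated} shifted polynomials $P_{\ell+v}(\cos\phi)$ with $v=0,\dots,4$. Each $P_{\ell+v}$ is then expanded directly via Lemma~\ref{hilb1}, so no differentiation of an asymptotic series is ever performed; the entire burden becomes trigonometric cancellation in the prefactors (the systems \eqref{3:32}, \eqref{6:17}, \eqref{7:30}--\eqref{4:47}) to kill the apparent singularity at $\phi=0$. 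A symptom of the difference is that the paper's leading corrections involve the phases $\psi_{0,\ell\pm1}$ (coming from the shifted indices $\ell\pm1$), whereas term-by-term differentiation of $\cos\psi_{n,\ell}$ produces phases $\psi_{n+1,\ell}$; since $\psi_{0,\ell+1}=\psi_{0,\ell}+\phi$ is not of the form $\psi_{n,\ell}$, your approach would yield expressions that require an extra reconciliation step to match the lemma as stated. If you want to pursue the differentiation route you must first establish a differentiated Hilb-type estimate with uniform error control down to $\phi\sim C/\ell$; absent that, the recurrence-relation reduction is the safer (and is the paper's) path.
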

\begin{proof}
The proof follows immediately from the tail decay of the derivatives of Legendre
polynomials derived in Appendix \ref{horror}. Recalling that $\alpha_{1,\ell}(\phi)=P'_{\ell}(\cos \phi)$ and in view of \eqref{P's} we obtain
\begin{align*}
\frac{\alpha_{1,\ell}(\phi)}{\ell (\ell+1)}&=h_0(0) \sin \psi_{0,\ell} \frac{1}{\ell^{1+1/2} \sin^{1+1/2} \phi}+ \phi^{-2-1/2} O(\ell^{-2-1/2})+ \phi^{-1} O(\ell^{-2}).%,\\
%\frac{\alpha^2_{1,\ell}(\phi)}{\ell^2 (\ell+1)^2}&=\phi^{-3} O(\ell^{-3}).
\end{align*}

\noindent Similarly, plugging \eqref{P's} and \eqref{P''s} into $\alpha_{2,\ell}(\phi)$, we have
\begin{align*}
\frac{\alpha_{2,\ell}(\phi)}{\ell (\ell+1)}&= h_0(0) \cos \psi_{0,\ell} \frac{1}{\ell^{1/2} \sin^{1/2} \phi} \sum_{j=0}^1 \binom{-1/2}{j} \frac{1}{2^j} \frac{1}{\ell^j}
- h_0(0) \cos \psi_{0,\ell} \frac{1}{\ell^{1+1/2} \sin^{1/2} \phi} \\
&\;\;+ 2 h_0(0) \sin \psi_{0,\ell+1} \frac{1}{\ell^{1+1/2} \sin^{1+1/2} \phi} + h_0(0) \cos \psi_{0,\ell} \frac{1}{\ell^{1+1/2} \sin^{1/2}} - h_0(1) \sin \psi_{0,\ell} \frac{1}{\ell^{1+1/2} \phi \sin^{1/2} \phi}\\
&\;\;+ h_1(0) \cos \psi_{1,\ell} A_1(\phi) \frac{1}{\ell^{1+1/2} \sin^{1/2} \phi} - h_0(0) \sin \psi_{0,\ell-1} \frac{1}{\ell^{1+1/2} \sin^{1+1/2} \phi}\\
&\;\;+ h_0(0) \cos \phi \sin \psi_{0,\ell} \frac{1}{\ell^{1+1/2} \sin^{1+1/2} \phi}+ \phi^{-2-1/2} O(\ell^{-2-1/2}) +O(\ell^{-2}),%\\
%\frac{\alpha^2_{2,\ell}(\phi)}{\ell^2 (\ell+1)^2}&=\cos^2 \psi_{0,\ell} h^2_0(0) \frac{1}{\ell \sin \phi} + 2 \sin \psi_{0,\ell+1} h^2_0(0) \cos \psi_{0,\ell} \frac{1}{\ell^2 \sin^2 \phi}\\&\;\;- \sin \psi_{0,\ell} \cos \psi_{0,\ell} h_0(0) h_0(1) \frac{1}{\ell^2 \phi \sin \phi} - \sin \psi_{0,\ell-1} \cos \psi_{0,\ell} h^2_0(0) \frac{1}{\ell^2 \sin^2 \phi}\\
%&\;\;+ \cos \phi \cos \psi_{0,\ell} \sin \psi_{0,\ell} h^2_0(0) \frac{1}{\ell^2 \sin^2 \phi}+\phi^{-1} O(\ell^{-2}),\\
%\frac{\alpha^4_{2,\ell}(\phi)}{\ell^4 (\ell+1)^4}&=\cos^4 \psi_{0,\ell} h^4_0(0) \frac{1}{\ell^2 \sin^2 \phi} + \phi^{-3} O(\ell^{-3}),
\end{align*}
\begin{align*}
\frac{\beta_{1,\ell}(\phi)}{\ell^{3/2} (\ell+1)^{3/2}}&=- h_0(0) \cos \psi_{0,\ell} \frac{1}{\ell^{1+1/2} \sin^{1+1/2} \phi}+\phi^{-2-1/2} O(\ell^{-2-1/2}), %\\
%\frac{\beta^2_{1,\ell}(\phi)}{\ell^{3} (\ell+1)^{3}}&=\phi^{-3} O(\ell^{-3}),
\end{align*}
and the asymptotic behaviour of $\frac{\beta_{2,\ell}(\phi)}{\ell^{3/2} (\ell+1)^{3/2}}$ and $\frac{\gamma_{1,\ell}(\phi)}{\ell^2 (\ell+1)^2}$ as in the statement.
%\begin{align*}
%\frac{\beta_{2,\ell}(\phi)}{\ell^{3/2} (\ell+1)^{3/2}}&=-\cos \psi_{0,\ell} \cos \phi h_0(0) \frac{1}{\ell^{1+1/2} \sin^{1+1/2} \phi}+ \phi^{-2-1/2} O(\ell^{-2-1/2}), %\\
%%\frac{\beta^2_{2,\ell}(\phi)}{\ell^{3} (\ell+1)^{3}}&=\phi^{-3} O(\ell^{-3}),
%\end{align*}
%\begin{align*}
%\frac{\gamma_{1,\ell}(\phi)}{\ell^2 (\ell+1)^2}&=\phi^{-2-1/2} O(\ell^{-2-1/2}).
%\end{align*}
\noindent From \eqref{P's}, \eqref{P''s} and \eqref{P'''s} we obtain the decay rate of $\frac{\beta_{3,\ell}(\phi)}{\ell^{3/2} (\ell+1)^{3/2}}$, $\frac{\gamma_{2,\ell}(\phi)}{\ell^2 (\ell+1)^2}$
and $\frac{\gamma_{3,\ell}(\phi)}{\ell^2 (\ell+1)^2}$.
\noindent Finally, in view of \eqref{P's}, \eqref{P''s}, \eqref{P'''s} and \eqref{P''''s}, we obtain the asymptotic behaviour of $\frac{\gamma_{4,\ell}(\phi)}{\ell^2 (\ell+1)^2}$.
\end{proof}

\noindent We exploit now Lemma \ref{perturbing} to obtain the bounds for the terms $A_{0,\ell}$, $A_{i_1,\ell}$ and $A_{i_1, \dots i_k,\ell}$ for $k=2,3,4$, $i_1, \dots i_k=1,\dots,8$. %in the statement of Lemma \ref{L-zero}.

 \begin{proof}[Proof of Lemma \ref{L-zero}] In view of \eqref{a1} and \eqref{a2} we first obtain
 \begin{align*}
A_{0,\ell }& =\int_{C/\ell }^{\pi -C/\ell} \frac{\sin \phi }{\sqrt{%
(1-4\alpha _{2,\ell }^{2}(\phi )/\lambda _{\ell }^{2})(1-4\alpha _{1,\ell
}^{2}(\phi )/\lambda _{\ell }^{2})}}d\phi \\
&=\int_{C/\ell }^{\pi -C/\ell} \left( 1+ 2 \frac{\alpha _{2,\ell }^{2}(\phi )}{\lambda _{\ell }^{2}} + 2 \cdot 3 \frac{\alpha_{2,\ell }^{4}(\phi )}{\lambda _{\ell }^{4}} \right) \sin \phi d\phi +O(\ell^{-2})\\
&= \cos \left( C/\ell \right) +2 \int_{C/\ell }^{\pi -C/\ell} \frac{\alpha _{2,\ell }^{2}(\phi )}{\lambda _{\ell }^{2}} \sin \phi d\phi +2 \cdot 3 \int_{C/\ell }^{\pi -C/\ell} \frac{\alpha _{2,\ell }^{4}(\phi )}{\lambda _{\ell }^{4}} \sin \phi d\phi +O(\ell^{-2}).
\end{align*}
Now, from \eqref{a2}, we have
\begin{align*}
2 \int_{C/\ell }^{\pi -C/\ell} \frac{\alpha _{2,\ell }^{2}(\phi )}{\lambda _{\ell }^{2}} \sin \phi d\phi &=2 \int_{C/\ell }^{\pi -C/\ell} \left[ \frac{h^2_0(0) \cos^2 \psi_{0,\ell}}{\ell \sin \phi } + 2 \binom{-1/2}{1} h^2_0(0) \frac{\sin \psi_{0,\ell+1} \cos \psi_{0,\ell}}{\ell^2 \sin^2 \phi} - h_0(0) h_0(1) \frac{\cos \psi_{0,\ell} \sin \psi_{0,\ell}}{\ell^2 \phi \sin \phi} \right.\\
&\;\; -\left. h^2_0(0) \frac{\cos \psi_{0,\ell} \sin \psi_{0,\ell-1}}{\ell^2 \sin^2 \phi} + h^2_0(0) \frac{\cos \psi_{0,\ell} \cos \phi \sin \psi_{0,\ell}}{\ell^2 \sin^2 \phi}
 \right] \sin \phi d\phi +O(\ell^{-2})\\
&=2 \int_{C/\ell }^{\pi -C/\ell} \left[ \frac{h^2_0(0) \cos^2 \psi_{0,\ell}}{\ell \sin \phi } \right] \sin \phi d\phi +O(\ell^{-2}) \\
&= \frac{4}{\pi} \int_{C/\ell}^{\pi -C/\ell} \frac{1}{\ell \sin \phi} \cos^2[(\ell+1/2) \phi-\pi/4] \sin \phi d \phi + O(\ell^{-2})\\
&= \frac{2}{\pi} \frac{1}{\ell} \int_{C/\ell}^{\pi -C/\ell} d \phi + \frac{2}{\pi} \frac{1}{\ell} \int_{C/\ell}^{\pi -C/\ell} \cos[2 (\ell+1/2) \phi-2 \pi/4 ] d \phi +O(\ell^{-2})\\
&= \frac{2}{\ell}+O(\ell^{-2}),
\end{align*}
and from \eqref{a3}, and the equality $\cos^4 \psi_{0,\ell}=\frac 3 8+ \frac 1 8 [-\cos(2 \phi (2 \ell+1) ) +4 \sin(\phi (2 \ell+1))]$, we have
\begin{align*}
2 \cdot 3 \int_{C/\ell }^{\pi -C/\ell} \frac{\alpha _{2,\ell }^{4}(\phi )}{\lambda _{\ell }^{4}} \sin \phi d\phi
&=2 \cdot 3 \frac{1}{\ell^2} \int_{C/\ell }^{\pi -C/\ell} h_0^4(0) \frac{1}{\sin \phi} \cos^4 \psi_{0,\ell} d \phi+O(\ell^{-2})\\
&=2 \cdot 3 \frac 3 8 \frac{2^2}{\pi^2} \frac{1}{\ell^2} \int_{C/\ell }^{\pi -C/\ell} \frac{1}{\sin \phi} d \phi+O(\ell^{-2})= \frac{2 \cdot 3^2}{\pi^2} \frac{\log \ell}{\ell^2}+O(\ell^{-2}).
\end{align*}
Therefore the statement follows since we obtain
\begin{align*}
{ \lambda _{\ell }^{2} A_{0,\ell } \; q(\mathbf{0}) -(\mathbb{E}\left[ {\cal N}^c(f_{\ell}) \right])^{2}= \lambda _{\ell }^{2} \left[ \frac{2}{\ell}+\frac{2 \cdot 3^2}{\pi^2} \frac{\log \ell}{\ell^2}+O(\ell^{-2}) \right] q(\mathbf{0}) =
\left[ 2 \ell^3 +\frac{2 \cdot 3^2}{\pi^2} \ell^2 \log \ell \right] q(\mathbf{0})+O(\ell^2).}
\end{align*}
\end{proof}

\begin{proof}[Proof of Lemma \ref{L-uno}]
 In view of \eqref{a1}, \eqref{a2}, \eqref{b12} and \eqref{b22}, we immediately obtain that $A_{1,\ell }, A_{2,\ell }=O(\ell^{-2})$:
\begin{align*}
A_{1,\ell }&=-\frac{16}{\lambda_{\ell}^3}\int_{C/\ell}^{\pi -C/\ell} \frac{\beta^2_{2,\ell}(\phi)}{(1-4 \alpha_2^2/\lambda_{\ell}^2)^{3/2} (1-4 \alpha_1^2/\lambda_{\ell}^2)^{1/2}} \sin \phi d \phi=-16 \int_{C/\ell}^{\pi -C/\ell} \frac{\beta^2_{2,\ell}(\phi)}{\ell^6} \sin \phi d \phi+O(\ell^{-2})=O(\ell^{-2}),
\end{align*}
\begin{align*}
A_{2,\ell }&=-\frac{16}{\lambda_{\ell}^3}\int_{C/\ell}^{\pi -C/\ell } \frac{\beta^2_{1,\ell}(\phi)}{(1-4 \alpha_2^2/\lambda_{\ell}^2)^{1/2} (1-4 \alpha_1^2/\lambda_{\ell}^2)^{3/2}} \sin \phi d \phi=-16 \int_{C/\ell}^{\pi -C/\ell} \frac{\beta^2_{1,\ell}(\phi)}{\ell^6} \sin \phi d \phi+O(\ell^{-2})=O(\ell^{-2}).
\end{align*}
Form \eqref{a1}, we have
\begin{align*}
A_{3,\ell}&=-\frac{16}{\lambda_{\ell}^3}\int_{C/\ell}^{\pi -C/\ell} \frac{\beta^2_{3,\ell}(\phi)}{(1-4 \alpha_2^2/\lambda_{\ell}^2)^{3/2} (1-4 \alpha_1^2/\lambda_{\ell}^2)^{1/2}} \sin \phi d \phi =-\frac{16}{\lambda_{\ell}^3}\int_{C/\ell}^{\pi -C/\ell} \beta^2_{3,\ell}(\phi) \left[ 1+ \frac 3 2 \frac{4 \alpha_2^2}{\lambda_{\ell}^2} \right] \sin \phi d \phi + O(\ell^{-2}),
\end{align*}
where, in view of \eqref{b32}, we obtain a leading non-oscillatory term in
\begin{align*}
-\frac{16}{\lambda_{\ell}^3} \int_{C/\ell}^{\pi -C/\ell} \beta^2_{3,\ell}(\phi) \sin \phi d \phi &=-16 \int_{C/\ell}^{\pi -C/\ell} \frac{\beta^2_{3,\ell}(\phi) }{\ell^6} \sin \phi d \phi+O(\ell^{-2})\\
&=-16 \int_{C/\ell}^{\pi -C/\ell} \left[ h_0^2(0) \frac{\sin^2 \psi_{0,\ell}}{\ell \sin \phi} \right] \sin \phi d \phi+O(\ell^{-2})\\
%&= -16 \; h_0^2(0) \; \frac{1}{\ell} \int_{C/\ell}^{\pi -C/\ell} \sin^2 \psi_{0,\ell} d \phi+O(\ell^{-2})\\
&= -16 \; h_0^2(0) \; \frac{1}{\ell} \int_{C/\ell}^{\pi -C/\ell} \left[ \frac 1 2 - \frac 1 2 \cos(2 \psi_{0,\ell}) \right] d \phi+O(\ell^{-2})\\
&= -16 \; h_0^2(0) \; \frac{1}{\ell} \; \frac 1 2 \; \pi +O(\ell^{-2})= - \frac{16}{\ell}+O(\ell^{-2})
\end{align*}
and, from \eqref{a2}, \eqref{b32} and the equality $\sin^2 \psi_{0,\ell} \cos^2 \psi_{0,\ell}=\frac 1 4 \cos^2[(2 \ell+1) \phi]=\frac 1 8 + \frac 1 8 \cos [2 \phi (2 \ell+1)]$, we have
\begin{align*}
-\frac{16}{\lambda_{\ell}^3} \frac {3\cdot 4} {2} \int_{C/\ell}^{\pi -C/\ell} \beta^2_{3,\ell}(\phi) \frac{ \alpha_2^2}{\lambda_{\ell}^2} \sin \phi d \phi&=
-16 \frac {3\cdot 4} {2} \int_{C/\ell}^{\pi -C/\ell} \frac{\beta^2_{3,\ell}(\phi)}{\ell^6} \frac{ \alpha_2^2}{\ell^4} \sin \phi d \phi +O(\ell^{-2})\\
&= -16 \frac {3\cdot 4} {2} h_0^4(0) \frac{1}{\ell^2} \int_{C/\ell}^{\pi -C/\ell} \left[ \frac{\sin^2 \psi_{0,\ell} \cos^2 \psi_{0,\ell}}{\sin^2 \phi} \right] \sin \phi d \phi +O(\ell^{-2})\\
&=-16 \frac {3\cdot 4} {2} h_0^4(0) \frac 1 8 \frac{1}{\ell^2} \int_{C/\ell}^{\pi -C/\ell} \frac{1}{\sin \phi} d \phi +O(\ell^{-2})\\
&= -\frac{3 \cdot 4^2}{\pi^2} \frac{2}{\ell^2} \left[ \log \left( \cos \left( \frac{C}{2 \ell} \right)\right)- \log \left( \sin \left( \frac{C}{2 \ell} \right)\right) \right]+O(\ell^{-2})\\
&= -\frac{3 \cdot 4^2}{\pi^2} \frac{2}{\ell^2} \log \ell+O(\ell^{-2}).
\end{align*}
Then
\begin{align*}
{ \lambda _{\ell }^{2} A_{3,\ell} = - 16 \ell^3 - \frac{2^5 \cdot 3 }{\pi^2} \ell^2 \log \ell +O(\ell^{2}).}
\end{align*}
The terms $A_{4,\ell}, A_{5,\ell}, A_{6,\ell}, A_{7,\ell}, A_{8,\ell}$ are $O(\ell^{-2})$. In fact, for $A_{4,\ell}$, using \eqref{a1}, \eqref{a2}, \eqref{b2}, \eqref{b3} and the trigonometric equality $-\cos \psi_{0,\ell} \sin \psi_{0,\ell}=1/2 \cos[(2 \ell+1) \phi]$, we have
\begin{align*}
A_{4,\ell}&=-\frac{16}{\lambda_{\ell}^3} \int_{C/\ell}^{\pi -C/\ell} \frac{ \beta_{2,\ell}(\phi) \beta_{3,\ell}(\phi)}{(1-4 \alpha_2^2/\lambda_{\ell}^2)^{3/2} (1-4 \alpha_1^2/\lambda_{\ell}^2)^{1/2}} \sin \phi d \phi\\
&= -\frac{16}{\ell^6} \int_{C/\ell}^{\pi -C/\ell} \beta_{2,\ell}(\phi) \beta_{3,\ell}(\phi) \sin \phi d \phi+O(\ell^{-2})\\
&= -16 \int_{C/\ell}^{\pi -C/\ell} \left[ - h_0(0) \frac{\cos \psi_{0,\ell} \cos \phi }{\ell^{1+1/2} \sin^{1+1/2} \phi} \right] \left[- h_0(0) \frac{\sin \psi_{0,\ell}}{\ell^{1/2} \sin^{1/2} \phi} \right] \sin \phi d \phi+O(\ell^{-2})=O(\ell^{-2}),
\end{align*}
for $A_{5,\ell}$, form \eqref{a1}, \eqref{a2}, \eqref{b22} and \eqref{g1}, we immediately have
\begin{align*}
&A_{5,\ell}\\
&=\frac{8}{\lambda_{\ell}^2} \int_{C/\ell}^{\pi -C/\ell} \frac{\gamma_{1,\ell}(\phi)}{ (1-4 \alpha_2^2/\lambda_{\ell}^2)^{1/2} (1-4 \alpha_1^2/\lambda_{\ell}^2)^{1/2} } \sin \phi d \phi - \frac{8 \cdot 4}{\lambda_{\ell}^4} \int_{C/\ell}^{\pi -C/\ell} \frac{\alpha_{2,\ell}(\phi) \beta_{2,\ell}^2(\phi)}{ (1-4 \alpha_2^2/\lambda_{\ell}^2)^{3/2} (1-4 \alpha_1^2/\lambda_{\ell}^2)^{1/2} } \sin \phi d \phi\\
&= 8 \int_{C/\ell}^{\pi -C/\ell} \frac{\gamma_{1,\ell}(\phi)}{\ell^4} \sin \phi d \phi
- 8 \cdot 4 \int_{C/\ell}^{\pi -C/\ell} \frac{\alpha_{2,\ell}(\phi) \beta_{2,\ell}^2(\phi)}{\ell^8} \sin \phi d \phi=O(\ell^{-2}),
\end{align*}
\noindent the asymptotic behaviour of $A_{6,\ell}$ follows from \eqref{a1}, \eqref{a2}, \eqref{b12} and \eqref{g2}
\begin{align*}
&A_{6,\ell}\\
&=\frac{8}{\lambda_{\ell}^2} \int_{C/\ell}^{\pi -C/\ell} \frac{\gamma_{2,\ell}(\phi)}{ (1-4 \alpha_2^2/\lambda_{\ell}^2)^{1/2} (1-4 \alpha_1^2/\lambda_{\ell}^2)^{1/2} } \sin \phi d \phi - \frac{8 \cdot 4}{\lambda_{\ell}^4} \int_{C/\ell}^{\pi -C/\ell} \frac{\alpha_{1,\ell}(\phi) \beta_{1,\ell}^2(\phi)}{ (1-4 \alpha_2^2/\lambda_{\ell}^2)^{1/2} (1-4 \alpha_1^2/\lambda_{\ell}^2)^{3/2} } \sin \phi d \phi\\
&= 8 \int_{C/\ell}^{\pi -C/\ell} \frac{\gamma_{2,\ell}(\phi)}{\ell^4} \sin \phi d \phi +O(\ell^{-2})= - 8 \int_{C/\ell}^{\pi -C/\ell} \frac{1}{\ell^{1+1/2} \sin^{1+1/2} \phi} \sin \psi_{0,\ell} h_0(0) \sin \phi d \phi+O(\ell^{-2})=O(\ell^{-2}),
\end{align*}
\noindent the asymptotic behaviour of $A_{7,\ell}$ follows from \eqref{a1}, \eqref{a2}, \eqref{b32} and \eqref{g4}, in fact we have
\begin{align*}
&A_{7,\ell}\\
&=\frac{8}{\lambda_{\ell}^2} \int_{C/\ell}^{\pi -C/\ell} \frac{\gamma_{4,\ell}(\phi)}{ (1-4 \alpha_2^2/\lambda_{\ell}^2)^{1/2} (1-4 \alpha_1^2/\lambda_{\ell}^2)^{1/2} } \sin \phi d \phi - \frac{8 \cdot 4}{\lambda_{\ell}^4} \int_{C/\ell}^{\pi -C/\ell} \frac{\alpha_{2,\ell}(\phi) \beta_{3,\ell}^2(\phi)}{ (1-4 \alpha_2^2/\lambda_{\ell}^2)^{3/2} (1-4 \alpha_1^2/\lambda_{\ell}^2)^{1/2} } \sin \phi d \phi\\
&= \frac{8}{\lambda_{\ell}^2} \int_{C/\ell}^{\pi -C/\ell} \gamma_{4,\ell}(\phi) \sin \phi d \phi+
 \frac{8}{\lambda_{\ell}^2} \frac 4 2 \frac{1}{\lambda_{\ell}^2} \int_{C/\ell}^{\pi -C/\ell} \gamma_{4,\ell}(\phi) \alpha^2_{2,\ell}(\phi) \sin \phi d \phi- \frac{8 \cdot 4}{\lambda_{\ell}^4} \int_{C/\ell}^{\pi/2} \alpha_{2,\ell}(\phi) \beta_{3,\ell}^2(\phi) \sin \phi d \phi+O(\ell^{-2})\\
 &= \frac{8}{{\ell}^4} \left( 1-\frac 2 \ell \right) \int_{C/\ell}^{\pi -C/\ell} \gamma_{4,\ell}(\phi) \sin \phi d \phi+
 \frac{16}{{\ell}^8} \int_{C/\ell}^{\pi -C/\ell} \gamma_{4,\ell}(\phi) \alpha^2_{2,\ell}(\phi) \sin \phi d \phi \\
 &\;\;- \frac{8 \cdot 4}{{\ell}^8} \int_{C/\ell}^{\pi -C/\ell} \alpha_{2,\ell}(\phi) \beta_{3,\ell}^2(\phi) \sin \phi d \phi+O(\ell^{-2})= \frac{8}{{\ell}^4} \int_{C/\ell}^{\pi -C/\ell} \gamma_{4,\ell}(\phi) \sin \phi d \phi+O(\ell^{-2}),
\end{align*}
where, using integration by parts,
\begin{align*}
&\frac{8}{{\ell}^4} \int_{C/\ell}^{\pi -C/\ell} \gamma_{4,\ell}(\phi) \sin \phi d \phi\\
&=8 h_0(0) \frac{1}{\ell^{1/2}} \int_{C/\ell}^{\pi -C/\ell} \frac{\cos \psi_{0,\ell}}{\sin^{1/2} \phi} \sin \phi \; d \phi\\
&= 8 h_0(0) \frac{1}{\ell^{1/2}} \int_{C/\ell}^{\pi -C/\ell} \cos [(\ell+1/2) \phi-\pi/4] \sin^{1/2} \phi \; d \phi\\
&= 8 h_0(0) \frac{1}{\ell^{1/2}(\ell+1/2)} \left\{ \left. \sin[(\ell+1/2) \phi-\pi/4] \sin^{1/2} \phi \right|_{C/\ell}^{\pi -C/\ell}
- \frac 1 2 \int_{C/\ell}^{\pi -C/\ell} \frac{\sin[(\ell+1/2) \phi-\pi/4]}{\sin^{1/2} \phi} d \phi \right\} \\
&= 8 \sqrt{\frac 2 \pi} \frac{1}{\ell^{3/2}} \left[ \sin[(\ell+1/2) \pi/2-\pi/4] \sin^{1/2} (\pi-C/\ell) -\sin[(\ell+1/2) C/\ell -\pi/4] \sin^{1/2} (C/\ell) \right] +O(\ell^{-2}),
\end{align*}
and finally $A_{8,\ell}$ follows from \eqref{a1}, \eqref{a2}, \eqref{b2}, \eqref{b3} and \eqref{g3}:
\begin{align*}
&A_{8,\ell}\\
&=\frac{8}{\lambda_{\ell}^2} \int_{C/\ell}^{\pi -C/\ell} \frac{\gamma_{3,\ell}(\phi)}{ (1-4 \alpha_2^2/\lambda_{\ell}^2)^{1/2} (1-4 \alpha_1^2/\lambda_{\ell}^2)^{1/2} } \sin \phi d \phi - \frac{8 \cdot 4}{\lambda_{\ell}^4} \int_{C/\ell}^{\pi -C/\ell} \frac{\alpha_{2,\ell}(\phi) \beta_{2,\ell}(\phi) \beta_{3,\ell}(\phi)}{ (1-4 \alpha_2^2/\lambda_{\ell}^2)^{3/2} (1-4 \alpha_1^2/\lambda_{\ell}^2)^{1/2} } \sin \phi d \phi\\
&= \frac{8}{\ell^4} \int_{C/\ell}^{\pi -C/\ell} \gamma_{3,\ell}(\phi)\sin \phi d \phi+O(\ell^{-2})= -8 h_0(0) \frac{1}{\ell^{1+1/2}} \int_{C/\ell}^{\pi -C/\ell} \frac{\sin \psi_{0,\ell} \cos \phi }{\sin^{1/2} \phi} d \phi +O(\ell^{-2})=O(\ell^{-2}).
\end{align*}
\vspace{0.3cm}

We study now the asymptotic behaviour of the higher order terms of the form $A_{i_1, \dots i_k,\ell}$ with $k=2,3,4$.
Note that each term of the form
$$A_{i_1, \dots i_k,\ell}\;\; \text{with}\;\; (i_1, \dots i_k) \ne (3,3), (3,7), (7,7), (3,7,7), (7,7,7), (7,7,7,7),$$
is of order $O(\ell^{-2})$. This implies that, to prove the statement, it is enough to analyse the high energy asymptotic behaviour of the following terms. \\

We first note that $A_{77,\ell}$ produces a leading non-oscillating term, in fact
\begin{align*}
A_{77,\ell}&= \int_{C/\ell}^{\pi -C/\ell} \frac{a^2_{7,\ell}(\phi)}{ (1-4 \alpha_2^2/\lambda_{\ell}^2)^{1/2} (1-4 \alpha_1^2/\lambda_{\ell}^2)^{1/2} } \sin \phi d \phi
\end{align*}
where
\begin{align*}
a^2_{7,\ell}(\phi)&= \frac{8^2}{\lambda_{\ell}^4} \left[ \gamma_4- \frac{4}{\lambda_{\ell}^2} \frac{\alpha_2 \; \beta_3^2}{ (1-4 \alpha_2^2/\lambda_{\ell}^2)} \right]^2 = \frac{8^2}{\lambda_{\ell}^4} \left[ \gamma_4^2 +\frac{16}{\lambda_{\ell}^4} \frac{\alpha^2_2 \; \beta_3^4}{ (1-4 \alpha_2^2/\lambda_{\ell}^2)^2} - 2 \gamma_4 \frac{4}{\lambda_{\ell}^2} \frac{\alpha_2 \; \beta_3^2}{ (1-4 \alpha_2^2/\lambda_{\ell}^2)} \right],
\end{align*}
so that
\begin{align} \label{roba}
A_{77,\ell}&= 8^2 \int_{C/\ell}^{\pi -C/\ell} \frac{\gamma_4^2}{\ell^8} \left( 1+2 \frac{\alpha_2^2}{\ell^4} \right) \sin \phi d \phi
- \frac{8^3}{\ell^{12}} \int_{C/\ell}^{\pi -C/\ell} \gamma_4 \alpha_2 \beta_3^2 \sin \phi d \phi +O(\ell^{-2}).
\end{align}
Now, in view of \eqref{g42} and \eqref{a2}, we obtain
\begin{align*}
8^2 \int_{C/\ell}^{\pi -C/\ell} \frac{\gamma_4^2}{\ell^8} \sin \phi d \phi = 8^2 h^2_0(0) \frac{1}{\ell} \int_{C/\ell}^{\pi -C/\ell} \cos^2 \psi_{0,\ell} d \phi+O(\ell^{-2})=\frac{2 \cdot 32}{\ell} +O(\ell^{-2}),
\end{align*}
\begin{align*}
2 \cdot 8^2 \int_{C/\ell}^{\pi -C/\ell} \frac{\gamma_4^2}{\ell^8} \frac{\alpha_2^2}{\ell^4} \sin \phi d \phi
&=2 \cdot 8^2 \int_{C/\ell}^{\pi -C/\ell} \left(h^2_0(0) \frac{\cos^2 \psi_{0,\ell}}{\ell \sin \phi} \right) \left( h^2_0(0) \frac{\cos^2 \psi_{0,\ell}}{\ell \sin \phi} \right) \sin \phi d \phi +O(\ell^{-2})\\
&= 2 \cdot 8^2 h^4_0(0) \frac{1}{\ell^2} \int_{C/\ell}^{\pi -C/\ell} \Big[\frac 3 8 + \frac 1 8 \cos(4 \psi_{0,\ell})+ \frac 1 2 \cos(2 \psi_{0,\ell}) \Big] \frac{1}{\sin \phi}d \phi +O(\ell^{-2})\\
&= 2 \cdot 8^2 h^4_0(0) \frac 3 8 \frac{1}{\ell^2} \int_{C/\ell}^{\pi -C/\ell} \frac{1}{\sin \phi} d \phi +O(\ell^{-2})\\
%&= 2 \cdot 2 \cdot 8^2 h^4_0(0) \frac 3 8 \frac{1}{\ell^2} \log \ell +O(\ell^{-2})\\
&= 2 \cdot 2 \cdot 8^2 \frac{2^2}{\pi^2} \frac 3 8 \frac{1}{\ell^2} \log \ell +O(\ell^{-2})= \frac{2 \cdot 3 \cdot 2^6}{\pi^2} \frac{1}{\ell^2} \log \ell +O(\ell^{-2}),
\end{align*}
for the last term in \eqref{roba} we use \eqref{b32}, \eqref{g4} and
%$$\cos^2 \psi^-_{\ell} \cos^2 \psi^+_{\ell}=\frac 1 8+ \frac 1 4 \cos (2 \psi^+_{\ell})+ \frac 1 4 \cos (2 \psi^-_{\ell})+ \frac 1 8 \cos(2(2 \ell+1) \phi)$$
$$\cos^2 \psi_{0,\ell} \sin^2 \psi_{0,\ell}=\frac 1 8+ \frac 1 4 \cos [2 (\ell+1/2) \phi+ \pi/2 ]+ \frac 1 4 \cos [2 (\ell+1/2) \phi- \pi/2]+ \frac 1 8 \cos[2(2 \ell+1) \phi]$$
to obtain
\begin{align*}
&- \frac{8^3}{\ell^{12}} \int_{C/\ell}^{\pi -C/\ell} \gamma_4 \alpha_2 \beta_3^2 \sin \phi d \phi\\
& =-8^3 \int_{C/\ell}^{\pi -C/\ell} \left(h_0(0) \frac{\cos \psi_{0,\ell}}{\ell^{1/2} \sin^{1/2} \phi} \right) \left( h_0(0) \frac{\cos \psi_{0,\ell}}{\ell^{1/2} \sin^{1/2} \phi} \right) \left(h^2_0(0) \frac{\sin^2 \psi_{0,\ell}}{\ell \sin \phi} \right) \sin \phi d \phi+O(\ell^{-2})\\
& =-8^3 h^4_0(0) \frac{1}{\ell^2} \int_{C/\ell}^{\pi -C/\ell} \frac{\cos^2 \psi_{0,\ell} \, \sin^2 \psi_{0,\ell} }{ \sin \phi} d \phi+O(\ell^{-2})\\
& =-8^2 h^4_0(0) \frac{1}{\ell^2} \int_{C/\ell}^{\pi -C/\ell} \frac{1}{ \sin \phi} d \phi+O(\ell^{-2})
%&= -2 \cdot 8^2 \frac{2^2}{\pi^2} \frac{1}{\ell^2} \log \ell +O(\ell^{-2})\\
= - \frac{2 \cdot 2^8}{\pi^2} \frac{1}{\ell^2} \log \ell +O(\ell^{-2}).
\end{align*}
Therefore
\begin{align*}
{ \lambda _{\ell }^{2} \frac 1 2 A_{77,\ell} =\lambda _{\ell }^{2} \left[ \frac{32}{\ell} + \frac{3 \cdot 2^6}{\pi^2} \frac{1}{\ell^2} \log \ell - \frac{2^8}{\pi^2} \frac{1}{\ell^2} \log \ell+O(\ell^{-2}) \right] = 32 \ell^3 - \frac{ 2^6}{\pi^2} \ell^2 \log \ell +O(\ell^{2}).}
\end{align*}
We apply now \eqref{a1}, \eqref{a2} and \eqref{b34} to study the asymptotic behaviour of $A_{33,\ell}$:
\begin{align*}
A_{33,\ell}&= \int_{C/\ell}^{\pi -C/\ell} \frac{a^2_{3,\ell}(\phi)}{ (1-4 \alpha_2^2/\lambda_{\ell}^2)^{1/2} (1-4 \alpha_1^2/\lambda_{\ell}^2)^{1/2} } \sin \phi d \phi
\end{align*}
where
\begin{align*}
a^2_{3,\ell}(\phi)= 16^2 \frac{\beta_3^4}{\lambda_{\ell}^6 (1-4 \alpha_2^2/\lambda_{\ell}^2)^2},
\end{align*}
that is
\begin{align*}
A_{33,\ell}&= 16^2 \int_{C/\ell}^{\pi -C/\ell} \frac{\beta_3^4}{ \lambda_{\ell}^6 (1-4 \alpha_2^2/\lambda_{\ell}^2)^{5/2} (1-4 \alpha_1^2/\lambda_{\ell}^2)^{1/2} } \sin \phi d \phi =16^2 \int_{C/\ell}^{\pi -C/\ell} \frac{\beta_3^4}{ \lambda_{\ell}^6} \sin \phi d \phi+O(\ell^{-2})\\
&= 16^2 \frac{1}{\ell^2} \int_{C/\ell}^{\pi -C/\ell} h^4_0(0) \frac{\sin^4 \psi_{0,\ell} }{\sin \phi} d \phi+O(\ell^{-2})
= 16^2 \frac{2}{\ell^2} \frac 3 8 \frac{2^2}{\pi^2} \log \ell + O(\ell^{-2})= \frac{2 \cdot 3 \cdot 2^7}{\pi^2} \frac{1}{\ell^2} \log \ell+O(\ell^{-2})
\end{align*}
since $\sin^4 \psi_{0,\ell}=\frac 3 8 - \frac 1 8 \cos[2 \phi (2 \ell+1)] - \frac 1 2 \sin [\phi (2 \ell+1)]$. Therefore
\begin{align*}
{ \lambda _{\ell }^{2} \frac 1 2 A_{33,\ell} =\lambda _{\ell }^{2} \left[ \frac{3 \cdot 2^7}{\pi^2} \frac{1}{\ell^2} \log \ell+O(\ell^{-2}) \right]
= \frac{3 \cdot 2^7}{\pi^2} \ell^2 \log \ell +O(\ell^{2}).}
\end{align*}
The terms $A_{37,\ell}$ and $A_{777,\ell}$ are both $O(\ell^{-2})$ since their leading non-constant term are oscillating. One has \begin{align*}
A_{37,\ell}&= \int_{C/\ell}^{\pi -C/\ell} \frac{a_{3,\ell}(\phi) a_{7,\ell}(\phi)}{ (1-4 \alpha_2^2/\lambda_{\ell}^2)^{1/2} (1-4 \alpha_1^2/\lambda_{\ell}^2)^{1/2} } \sin \phi d \phi
\end{align*}
where, by \eqref{a1} and \eqref{a2},
\begin{align*}
 \frac{a_{3,\ell}(\phi) a_{7,\ell}(\phi)}{ (1-4 \alpha_2^2/\lambda_{\ell}^2)^{1/2} (1-4 \alpha_1^2/\lambda_{\ell}^2)^{1/2} } =-16 \frac{ \frac{\beta_3^2}{\lambda_{\ell}^3 (1-4 \alpha_2^2/\lambda_{\ell}^2)} 8\frac{\gamma _{4}+\frac{4\alpha _{2
} \beta _{3}^{2}}{4\alpha _{2}^{2}-\lambda _{\ell }^{2}}}{\lambda _{\ell }^2}}{(1-4 \alpha_2^2/\lambda_{\ell}^2)^{1/2} (1-4 \alpha_1^2/\lambda_{\ell}^2)^{1/2}}= -16 \cdot 8 \; \frac{\beta_3^2}{\ell^6} \frac{\gamma_4}{\ell^4}+O(\frac{1}{\ell^3 \phi^3}).
\end{align*}
Then, in view of \eqref{b32} and \eqref{g4}, we have
\begin{align*}
A_{37,\ell}&= -16 \cdot 8 \; \int_{C/\ell}^{\pi -C/\ell} \frac{\beta_3^2}{\ell^6} \frac{\gamma_4}{\ell^4} \sin \phi d \phi +O(\ell^{-2})\\
&=-16 \cdot 8 \; \int_{C/\ell}^{\pi -C/\ell} \left[ h^2_0(0) \frac{\sin^2 \psi_{0,\ell}}{\ell \sin \phi} \right] \left[ h_0(0) \frac{\cos \psi_{0,\ell}}{\ell^{1/2} \sin^{1/2} \phi} \right] \sin \phi d \phi =O(\ell^{-2})
\end{align*}
since
\begin{align*}
 \int_{C/\ell}^{\pi -C/\ell} \frac{\sin^2 \psi_{0,\ell} \cos \psi_{0,\ell} }{ \sin^{1/2} \phi} d \phi =O(\ell^{-1/2}).
\end{align*}
And for $A_{777,\ell}$ we write
\begin{align*}
A_{777,\ell}&= \int_{C/\ell}^{\pi -C/\ell} \frac{ a^3_{7,\ell}(\phi)}{ (1-4 \alpha_2^2/\lambda_{\ell}^2)^{1/2} (1-4 \alpha_1^2/\lambda_{\ell}^2)^{1/2} } \sin \phi d \phi
\end{align*}
where
\begin{align*}
\frac{ a^3_{7,\ell}(\phi)}{ (1-4 \alpha_2^2/\lambda_{\ell}^2)^{1/2} (1-4 \alpha_1^2/\lambda_{\ell}^2)^{1/2} }= 8^3 \frac{\gamma_4^3}{\ell^{4 \cdot 3}}+O(\frac{1}{\ell^3 \phi^3})
\end{align*}
and, from \eqref{g43},
\begin{align*}
A_{777,\ell}&= 8^3 h^3_0(0) \frac{1}{\ell^{1+1/2}} \int_{C/\ell}^{\pi -C/\ell} \frac{\cos^3 \psi_{0,\ell}}{\sin^{1/2} \phi} d \phi=O(\ell^{-2}).
\end{align*}
The last two terms we need to study are $A_{377,\ell}$ and $A_{7777,\ell}$; $A_{377,\ell}$ is defined by
\begin{align*}
A_{377,\ell}&= \int_{C/\ell}^{\pi -C/\ell} \frac{ a_{3,\ell}(\phi) a^2_{7,\ell}(\phi)}{ (1-4 \alpha_2^2/\lambda_{\ell}^2)^{1/2} (1-4 \alpha_1^2/\lambda_{\ell}^2)^{1/2} } \sin \phi d \phi,
\end{align*}
where
\begin{align*}
 \frac{a_{3,\ell}(\phi) a^2_{7,\ell}(\phi)}{ (1-4 \alpha_2^2/\lambda_{\ell}^2)^{1/2} (1-4 \alpha_1^2/\lambda_{\ell}^2)^{1/2} } =-16 \frac{ \frac{\beta_3^2}{\lambda_{\ell}^3 (1-4 \alpha_2^2/\lambda_{\ell}^2)} 8^2 \frac{ \left[ \gamma _{4}+\frac{4\alpha _{2
} \beta _{3}^{2}}{4\alpha _{2}^{2}-\lambda _{\ell }^{2}}\right]^2}{\lambda _{\ell }^4}}{(1-4 \alpha_2^2/\lambda_{\ell}^2)^{1/2} (1-4 \alpha_1^2/\lambda_{\ell}^2)^{1/2}}= -16 \cdot 8^2 \; \frac{\beta_3^2}{\ell^6} \frac{\gamma_4^2}{\ell^8}+O(\frac{1}{\ell^3 \phi^3}).
\end{align*}
Now, by applying \eqref{a1}, \eqref{a2}, \eqref{b32} and \eqref{g42}, we have
\begin{align*}
A_{377,\ell}&= -16 \cdot 8^2 \; \int_{C/\ell}^{\pi -C/\ell} \frac{\beta_3^2}{\ell^6} \frac{\gamma_4^2}{\ell^8} \sin \phi d \phi +O(\ell^{-2})\\
&= -16 \cdot 8^2 \; \int_{C/\ell}^{\pi -C/\ell} \left[ h^2_0(0) \frac{\sin^2 \psi_{0,\ell}}{\ell \sin \phi} \right] \left[ h^2_0(0) \frac{\cos^2 \psi_{0,\ell}}{\ell \sin \phi} \right] \sin \phi d \phi +O(\ell^{-2})\\
%&= -16 \cdot 8^2 \frac{1}{\ell^2} h^4_0(0) \int_{C/\ell}^{\pi -C/\ell} \cos^2 \psi^+_{\ell} \cos^2 \psi^-_{\ell} \frac{1}{\sin \phi} d \phi +O(\ell^{-2})\\
&= -2 \cdot 16 \cdot 8^2 \frac{1}{\ell^2} \frac{2^2}{\pi^2} \frac 1 8 \log \ell +O(\ell^{-2})= -\frac{2 \cdot 2^9}{\pi^2} \frac{1}{\ell^2} \log \ell +O(\ell^{-2}).
\end{align*}
Therefore
\begin{align*}
{ \lambda _{\ell }^{2} \frac {3}{3!} A_{377,\ell}= - \frac{2^9}{ \pi^2} \ell^2 \log \ell +O(\ell^{2}).}
\end{align*}
Finally for $A_{7777,\ell}$ we apply \eqref{a1}, \eqref{a2} and \eqref{g44}, so that
\begin{align*}
A_{7777,\ell}&= \int_{C/\ell}^{\pi -C/\ell} \frac{ a^4_{7,\ell}(\phi)}{ (1-4 \alpha_2^2/\lambda_{\ell}^2)^{1/2} (1-4 \alpha_1^2/\lambda_{\ell}^2)^{1/2} } \sin \phi d \phi
= 8^4 \int_{C/\ell}^{\pi -C/\ell} \frac{\gamma_4^4}{\ell^{4 \cdot 4}} \sin \phi d \phi \\
&=8^4 h^4_0(0) \frac{1}{\ell^2}\int_{C/\ell}^{\pi -C/\ell} \frac{\cos^4 \psi_{0,\ell}}{\sin \phi} d \phi+O(\ell^{-2})
%&= 2 \cdot 8^4 \frac{2^2}{\pi^2} \frac{1}{\ell^2} \frac 3 8 \log \ell +O(\ell^{-2})\\
= \frac{2 \cdot 3 \cdot 2^{11}}{\pi^2} \frac{1}{\ell^2} \log \ell +O(\ell^{-2}),
\end{align*}
and
\begin{align*}
{ \lambda _{\ell }^{2} \frac 1 {4!} A_{7777,\ell} =\lambda _{\ell }^{2} \frac{1}{3\cdot 4} \left[ \frac{3 \cdot 2^{11}}{\pi^2} \frac{1}{\ell^2} \log \ell +O(\ell^{-2})\right] = \frac{ 2^{9}}{\pi^2} \ell^2 \log \ell +O(\ell^{2}).}
\end{align*}
\end{proof}

\noindent We prove now Lemma \ref{der_q_hat} and Lemma \ref{G-e} stated in Section \ref{eval-const}.

\begin{proof}[Proof of Lemma \ref{der_q_hat}]
Let
$$\hat{q}(\mathbf{a}, z_1,z_2,z_3, w_1,w_2,w_3)= \frac{1}{\sqrt{{\rm det}(\Delta (\mathbf{a}) )}} \exp \big\{- \frac 1 2 (z_1,z_2,z_3, w_1,w_2,w_3 ) \Delta(\mathbf{a})^{-1} (z_1,z_2,z_3, w_1,w_2,w_3 )^t \big\},
$$
and
$$
\mathbf{a}_i=(0,\dots,0,a_i,0,\dots,0), \hspace{1cm} i=1,\dots,8,
$$
where $a_i$ is the $i$th perturbing element of $\mathbf{a}$. Since $\hat{q} (\mathbf{a}, z_1,z_2,z_3, w_1,w_2,w_3) $ is an analytic function of the elements of the
vector $ \mathbf{a}$ \cite[Theorem 1.5]{kato}, to simplify the calculations note that, for example for the $j$th derivative with respect to $a_i$, we have
\begin{equation*}
\Big[\frac{\partial^{j}}{\partial a_{i}^{j}} \hat{q}(\mathbf{a}; t_{1},t_{2} ; z_1,z_2,w_1,w_2 )\Big]_{\mathbf{a}=\mathbf{0}}=%
\Big[\frac{\partial ^{j}}{\partial a_{i}^{j}} \hat{q} (\mathbf{a}%
_{i}; t_{1},t_{2} ; z_1,z_2,w_1,w_2 )\Big]_{\mathbf{a}_i=\mathbf{0}}, \;\;\;i=1,\dots,8.
\end{equation*}%
Now, using Leibniz integral rule and a computer-oriented computation to evaluate the derivates of $\hat{q}$, we obtain the statement of Lemma \ref{der_q_hat}.
\end{proof}

\begin{proof}[Proof of Lemma \ref{G-e}]
To prove the lemma it is convenient to introduce the transformation $W_1=Y_1$, $W_2=Y_2$ and $W_3=Y_1+Y_3$,
so that
$$
{\cal I}_r=\E[|Y_1 Y_3-Y_2^2| (Y_1-3 Y_3)^r]=\E[ |W_1 (W_3-W_1) -W_2^2|\, (W_1-3 (W_3-W_1) )^r ].
$$
We write now ${\cal I}_r$ in terms of a conditional expectation as follows:
$$
{\cal I}_r=\E_{W_3}[\E[ |W_1 (W_3-W_1) -W_2^2|\, (W_1-3 (W_3-W_1) )^r | W_3=t] ]
$$
and note that
$$
\E[ |W_1 (W_3-W_1) -W_2^2|\, (W_1-3 (W_3-W_1) )^r | W_3=t]= \E \left[ \left| \frac{t^2}{4}-Z_1^2-Z_2^2\right | \, (4 Z_1-t)^r \right]
$$
where $Z_1,Z_2$ denote standard independent Gaussian variables.

In the case $r=0$ we only have the chi-squared random variable $\zeta=Z_1^2+Z_2^2$ with density
$$f_{\zeta}(v)=\frac 1 2 e^{-\frac{v}{2}}, \hspace{1cm} v \in \mathbb{R},$$
so that we immediately have
$$
\E \left[ \left |\frac{t^2}{4}-Z_1^2-Z_2^2 \right| \right]=\E \left[ \left|\frac{t^2}{4}-\zeta \right| \right]=-2 + 4 e^{-\frac{t^2}{8}}+\frac{t^2}{4},
$$
and, since $W_3$ is a centred Gaussian with density
$$f_{W_3}(t)=\frac{1}{4 \sqrt \pi} e^{-\frac{t^2}{16}},$$
we obtain
$$
{\cal I}_0=\frac{1}{4 \sqrt \pi} \int_{\mathbb{R}} e^{-\frac{t^2}{16}} \left(-2 + 4 e^{-\frac{t^2}{8}}+\frac{t^2}{4} \right) d t=\frac{2^2}{\sqrt 3}.
$$

 For $r=2,4$ the proof is similar with the only difference that now we need to compute the joint density function of $\xi=Z_1$ and $\zeta=Z_1^2+Z_2^2$ that is given by
\begin{align*}
f_{(\xi,\zeta)}(u,v) =\frac{1}{2 \pi} \frac{e^{- \frac v 2}}{\sqrt{v-u^2}} \ind_{\{v\ge 0,\; u \in (-\sqrt v, \sqrt v )\}}.
\end{align*}

\end{proof}

\appendix

\section{Estimates for the first four derivatives of Legendre polynomials} \label{horror}
\noindent We start with the following lemma:

\begin{lemma} \label{hilb1}
For $\alpha+1> -1/2$ and $\alpha + \beta + 1 \ge - 1$, we have
\begin{align*}
\Big( \sin \frac{\phi}{2} \Big)^{\alpha+1} \Big(\cos \frac{\phi}{2}\Big)^{\beta} P_{\ell}^{(\alpha+1,\beta)}(\cos \phi)
=\frac{\Gamma(\ell+\alpha+2)}{\ell !} \Big( \frac{\phi}{\sin \phi} \Big)^{1/2} \Big[\sum_{n=0}^{m-1} A_n(\phi) \frac{J_{\alpha+n+1}(N \phi)}{N^{\alpha+n+1}}+\sigma_m \Big]
\end{align*}
where $$N=\ell+\frac 1 2(\alpha+\beta+2)$$ and
$$\sigma_m=\phi^m O(N^{-m-\alpha-1})$$
the $O$-therm being uniform with respect to $\theta \in [0,\pi-\varepsilon]$, $\varepsilon>0$. The coefficients $A_n(\phi)$
are analytic functions in $0 \le \phi \le \pi-\varepsilon$, and are $O(\phi^n)$ in that interval. In particular, $A_0(\phi)=1$ and
\begin{align*}
A_1(\phi)=\Big[ (\alpha+1)^2 -\frac 1 4 \Big] \Big( \frac{1-\phi \cot \phi}{2 \phi} \Big)- \frac{(\alpha+1)^2-\beta^2}{4} \tan \frac{\phi}{2}.
\end{align*}
\end{lemma}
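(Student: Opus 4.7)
The plan is to derive this refined Hilb-type expansion by the Liouville--Green (Langer uniform asymptotic) method applied to the Jacobi differential equation, following the strategy of Frenzen--Wong. Setting $y(\phi)=P_{\ell}^{(\alpha+1,\beta)}(\cos\phi)$, the Jacobi ODE in the variable $\phi$ takes the form
\begin{equation*}
y''(\phi)+\bigl[(\alpha+1)\cot(\phi/2)-(\beta+1)\tan(\phi/2)\bigr]y'(\phi)+\ell(\ell+\alpha+\beta+2)\,y(\phi)=0.
\end{equation*}
The first step is to remove the drift term by the substitution
\begin{equation*}
w(\phi)=\bigl[\sin(\phi/2)\bigr]^{\alpha+3/2}\bigl[\cos(\phi/2)\bigr]^{\beta+1/2}y(\phi),
\end{equation*}
which produces a Schr\"odinger-type equation $w''(\phi)+\bigl[N^{2}+V(\phi)\bigr]w(\phi)=0$ with $N=\ell+\tfrac12(\alpha+\beta+2)$ and an explicit trigonometric potential $V(\phi)$ whose leading singularity at $\phi=0$ is $-\bigl[(\alpha+1)^{2}-\tfrac14\bigr]/\phi^{2}+O(1)$.

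The next step is to recognise this as a perturbation of the Bessel equation of order $\alpha+1$. After the further factorisation $w(\phi)=(\phi/\sin\phi)^{1/2}\,u(\phi)$ (which absorbs the geometric Jacobian), the equation for $u$ differs from the Bessel equation $u''+u'/\phi+\bigl[N^{2}-(\alpha+1)^{2}/\phi^{2}\bigr]u=0$ by a smooth bounded perturbation on $[0,\pi-\varepsilon]$. I would then construct a formal solution by the ansatz
\begin{equation*}
u(\phi)\sim\sum_{n\ge 0}A_{n}(\phi)\,\frac{J_{\alpha+n+1}(N\phi)}{N^{\alpha+n+1}},
\end{equation*}
substitute into the ODE, and use the Bessel recurrences together with $J_{\nu}''=-(1/z)J_{\nu}'+((\nu^{2}/z^{2})-1)J_{\nu}$ to collect powers of $N^{-1}$. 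Matching orders yields $A_{0}(\phi)=1$ (with the overall normalisation fixed by the value $P_{\ell}^{(\alpha+1,\beta)}(1)=\Gamma(\ell+\alpha+2)/(\ell!\,\Gamma(\alpha+2))$ combined with $J_{\alpha+1}(z)\sim (z/2)^{\alpha+1}/\Gamma(\alpha+2)$ as $z\to 0$) and a first-order linear ODE for $A_{1}(\phi)$ whose solution, after integration and simplification using $V(\phi)$, is exactly
\begin{equation*}
A_{1}(\phi)=\Bigl[(\alpha+1)^{2}-\tfrac14\Bigr]\frac{1-\phi\cot\phi}{2\phi}-\frac{(\alpha+1)^{2}-\beta^{2}}{4}\tan\frac{\phi}{2}.
\end{equation*}
In general $A_{n}(\phi)=O(\phi^{n})$ because each recursion step introduces a factor vanishing at $\phi=0$, which I would verify by induction from the explicit form of the recurrence.

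The main obstacle, and the heart of the Frenzen--Wong improvement, is the \emph{uniform} error bound $\sigma_{m}=\phi^{m}O(N^{-m-\alpha-1})$. For this I would apply variation of parameters: the remainder $R_{m}(\phi)$ after truncating the series at level $m$ satisfies an inhomogeneous Bessel equation whose right-hand side is an explicit combination of $A_{n}$'s and $J_{\alpha+n+1}(N\phi)$ with $n$ near $m$. Writing $R_{m}$ via the Bessel Green's function and using the standard estimate $|J_{\nu}(z)|\lesssim\min\{(z/2)^{\nu}/\Gamma(\nu+1),z^{-1/2}\}$ together with the companion bound for $Y_{\nu}$, one obtains an integral inequality to which a Gronwall-type argument applies, giving the claimed bound uniformly on $[0,\pi-\varepsilon]$. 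The restriction $\alpha+1>-1/2$ enters to guarantee integrability of the Bessel Green's kernel near $\phi=0$, while $\alpha+\beta+1\ge -1$ ensures $N\ge 1/2$ for large $\ell$ so that the $N^{-1}$ expansion is legitimate.
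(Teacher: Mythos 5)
The paper does not actually prove this lemma: it defers entirely to Frenzen and Wong, ``Asymptotic expansions of the Lebesgue constants for Jacobi series,'' Lemma~1 (Pacific J.\ Math.\ 122 (1986), 391--415), so there is no proof in the paper against which to compare yours. Your sketch --- remove the first-derivative term from the Jacobi equation in the variable $\phi$, obtain a Schr\"odinger form $w''+[N^{2}+V(\phi)]w=0$, recognise it as a bounded perturbation of the Bessel equation of order $\alpha+1$, posit a Bessel ansatz with slowly varying coefficients $A_{n}(\phi)$ determined by transport equations, fix the normalisation via the value at $\phi=0$, and control the remainder by variation of parameters plus a Gronwall estimate --- is the standard Liouville--Langer machinery for Hilb-type uniform expansions and is a plausible reconstruction of what the cited lemma establishes.

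One concrete slip: with $x=\cos\phi$ and $y(\phi)=P_{\ell}^{(\alpha+1,\beta)}(\cos\phi)$, the first-order coefficient in the Jacobi equation in $\phi$ is
\begin{equation*}
\Bigl(\alpha+\tfrac32\Bigr)\cot\tfrac{\phi}{2}-\Bigl(\beta+\tfrac12\Bigr)\tan\tfrac{\phi}{2},
\end{equation*}
not $(\alpha+1)\cot(\phi/2)-(\beta+1)\tan(\phi/2)$ as you wrote. Your subsequent substitution $w=[\sin(\phi/2)]^{\alpha+3/2}[\cos(\phi/2)]^{\beta+1/2}y$, the identification $N=\ell+\tfrac12(\alpha+\beta+2)$, and the quoted leading singularity $-[(\alpha+1)^{2}-\tfrac14]/\phi^{2}$ of $V(\phi)$ are all consistent with the corrected coefficient (indeed they agree with Szeg\H{o}, Theorem~4.24.2, applied with parameter $\alpha+1$), so the slip is local and does not propagate. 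Two points you state as ``to be verified'' --- that the transport recursion forces $A_{n}(\phi)=O(\phi^{n})$, and that the Green's-function remainder estimate closes uniformly near $\phi=0$ under $\alpha+1>-\tfrac12$ --- are precisely the delicate parts of the Frenzen--Wong argument and would need to be written out to make the proof complete, but the overall route is sound.
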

\noindent For a proof of Lemma \ref{hilb1} see \cite[Lemma 1]{frenzen&wong}. We will apply Lemma \ref{hilb1} with $\alpha=-1$, $\beta=0$ and $m=1,2,3$, i.e.,
\begin{align*}
P_{\ell+u}(\cos \phi)= \Big( \frac{\phi}{\sin \phi} \Big)^{1/2} \Big[\sum_{n=0}^{m-1} A_n(\phi) \frac{J_{n}((\ell+u+1/2) \phi)}{(\ell+u+1/2)^{n}}+\phi^m O((\ell+u+1/2)^{-m}) \Big],
\end{align*}
with $u=0,1,2\dots$
\begin{lemma} \label{bessel}
The following asymptotic representation for the Bessel
functions of the first kind holds:
\begin{align*}
J_n(x)&=\left( \frac{2}{\pi x} \right)^{1/2} \cos(x- n \pi /2-\pi/4) %
 \sum_{k=0}^\infty (-1)^k (n,2k) \; (2 x)^{-2 k} \\
&\;\;-\left( \frac{2}{\pi x} \right)^{1/2} \sin(x-n \pi/2 -\pi/4)
\sum_{k=0}^\infty (-1)^k (n,2k+1)\; (2 x)^{-2 k-1},
\end{align*}
where $\varepsilon>0$, $|\arg x|\le \pi-\varepsilon$, $(n,0)=1$, and
$$(n,k)=\frac{(4 n^2-1) (4 n^2-3^2) \cdots (4 n^2-(2k-1)^2)}{2^{2k} k!}.$$
\end{lemma}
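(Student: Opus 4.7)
The result is the classical large-argument (Hankel) asymptotic expansion for Bessel functions of the first kind. The plan is to derive it from the corresponding asymptotic expansions of the Hankel functions $H_n^{(1)}$ and $H_n^{(2)}$ and then combine them via the identity $J_n=\tfrac{1}{2}(H_n^{(1)}+H_n^{(2)})$.

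First I would quote (or, if one insists on a self-contained derivation, recover by steepest descent applied to Hankel's contour integral representation of $H_n^{(1,2)}$) the asymptotics
$$H_n^{(1,2)}(x)\sim \left(\frac{2}{\pi x}\right)^{1/2} e^{\pm i\theta}\sum_{k=0}^{\infty} (\pm i)^k\,\frac{(n,k)}{(2x)^k},\qquad \theta:=x-\tfrac{n\pi}{2}-\tfrac{\pi}{4},$$
valid uniformly in the sector $|\arg x|\le \pi-\varepsilon$. Here the prefactor $(2/\pi x)^{1/2}$ comes from the Gaussian contribution of the unique saddle point, the oscillatory factor $e^{\pm i\theta}$ from evaluating the phase at the saddle, and the coefficients $(n,k)$ are generated by the higher-order terms in the Laplace expansion around the saddle. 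The definition of $(n,k)$ in the statement matches exactly this expansion.

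Next, averaging the two expansions and using $e^{\pm i\theta}=\cos\theta\pm i\sin\theta$, the $k$-th term of the combined series carries the scalar factor
$$\tfrac{1}{2}\bigl(i^k e^{i\theta}+(-i)^k e^{-i\theta}\bigr).$$
Splitting according to the parity of $k$: for $k=2m$ one has $i^{2m}=(-i)^{2m}=(-1)^m$, so this collapses to $(-1)^m\cos\theta$; for $k=2m+1$ one has $i^{2m+1}=i(-1)^m$ and $(-i)^{2m+1}=-i(-1)^m$, so the bracket collapses to $-(-1)^m\sin\theta$. Re-indexing the two resulting subseries, separated by parity, produces exactly the two sums $\sum_k (-1)^k(n,2k)(2x)^{-2k}$ (attached to $\cos\theta$) and $-\sum_k(-1)^k(n,2k+1)(2x)^{-2k-1}$ (attached to $\sin\theta$) in the statement.

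The main obstacle is the usual one for the method of steepest descent, namely showing that the error in the Hankel expansion is uniform on the full sector $|\arg x|\le\pi-\varepsilon$; the Stokes directions $\arg x=\pm\pi$ are the reason why one cannot avoid the $\varepsilon$-cutoff. This is standard and is carried out in full detail in Watson's treatise on Bessel functions; for the applications in this paper it suffices to quote the stated form of the expansion.
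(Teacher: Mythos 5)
Your proof is correct: the parity-splitting computation $\tfrac12(i^k e^{i\theta}+(-i)^k e^{-i\theta})=(-1)^m\cos\theta$ for $k=2m$ and $-(-1)^m\sin\theta$ for $k=2m+1$ correctly reproduces the two subseries, and the coefficient normalization $(n,k)/(2x)^k$ matches the standard Hankel-expansion coefficients. The paper does not prove this lemma at all, merely citing Lebedev's \emph{Special Functions}, Section 5.11, whose derivation is precisely the route you sketch (Hankel-function asymptotics combined via $J_n=\tfrac12(H_n^{(1)}+H_n^{(2)})$), so your argument matches the source in substance.
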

\noindent For a proof of Lemma \ref{bessel} see \cite[Section 5.11]{lebedev}.\\

 We will use the following notation: for $n=0, \dots, m-1$ and $u=0,1,2,\dots$
$$p_{n, \ell+u}(\phi)=\Big( \frac{\phi}{\sin \phi} \Big)^{1/2} A_n(\phi) \frac{J_{n}((\ell+u+1/2) \phi)}{(\ell+u+1/2)^{n}},$$
so that we have
\begin{align} \label{Nform}
P_{\ell+u}(\cos \phi)= \sum_{n=0}^{m-1} p_{n, \ell+u}(\phi) +\phi^m O(\ell^{-m}) .
\end{align}
Let $$h_n(k)= \left( \frac{2}{\pi} \right)^{1/2} (n,k) \frac{1}{2^{k}}, \hspace{0.5cm} \psi_{n,\ell+u}=(\ell+u+1/2) \phi- n \pi /2-\pi/4, \hspace{0.5cm} s_{n,k} (\ell,\phi)= \frac{1}{\sqrt{\sin \phi}} \frac{A_n(\phi)}{\phi^k} \frac{1}{\ell^{k+n+1/2}}.$$

In view of Lemma \ref{bessel} we can rewrite $p_{n, \ell+u}$ as follows
\begin{align*}
p_{n,\ell+u}(\phi)=p_{n,r,\ell+u}(\phi) +\phi^{n-1/2} O(\ell^{-r-n-3/2}),
\end{align*}
where
\begin{align} \label{Nform_2}
p_{n,r,\ell+u}(\phi) &=\cos \psi_{n,\ell+u} \sum_{k=0}^\infty (-1)^k h_n(2k) s_{n,2k} (\ell,\phi) \sum_{i=0}^r \sum_{j=i}^r \binom{-2k-n-1/2}{j} \binom{j}{i} \frac{1}{\ell^j} u^i 2^{i-j} \nonumber \\
&- \sin \psi_{n,\ell+u} \sum_{k=0}^\infty (-1)^k h_n(2k+1) s_{n,2k+1} (\ell,\phi) \sum_{i=0}^r \sum_{j=i}^r \binom{-2k-n-3/2}{j} \binom{j}{i} \frac{1}{\ell^j} u^i 2^{i-j} \nonumber\\
&+\phi^{n-1/2} O(\ell^{-r-n-3/2});
\end{align}
in particular for $u=0$ we have
\begin{align*}
p_{n, \ell}(\phi)&=p_{n,r,\ell}(\phi) +\phi^{n-1/2} O(\ell^{-r-n-3/2})\\
&=\cos \psi_{n,\ell} \sum_{k=0}^\infty (-1)^k h_n(2k) s_{n,2k} (\ell,\phi) \sum_{j=0}^r \binom{-2k-n-1/2}{j} \frac{1}{\ell^j} \frac{1}{2^j}\\
&- \sin \psi_{n,\ell} \sum_{k=0}^\infty (-1)^k h_n(2k+1) s_{n,2k+1} (\ell,\phi) \sum_{j=0}^r \binom{-2k-n-3/2}{j} \frac{1}{\ell^j} \frac{1}{2^{j}}\\&+\phi^{n-1/2} O(\ell^{-r-n-3/2}).
\end{align*}

We will use the following recurrence relations to express the first four derivatives of Legendre polynomials in terms of $P_{\ell+u}$, for $u=0,1,2,3,4$. We have \cite[Section 4.3]{lebedev}:
\begin{lemma} \label{P}
For $\ell=0,1,2 \dots$ %and $x \in \mathbb{R}$
\begin{align} \label{P'}
P'_\ell(x)&= \frac{\ell+1}{(x^2-1)} [x P_\ell(x)-P_{\ell+1}(x)],
\end{align}
\begin{align} \label{P''}
P''_\ell(x)&=\frac{\ell(\ell+1)}{(x^2-1)^2} [ x^2 P_\ell(x)-2 x P_{\ell+1}(x)+ P_{\ell+2}(x) ]+\frac{\ell+1}{(x^2-1)^2} [ (1+2 x^2 ) P_\ell(x)-5 x P_{\ell+1}(x)+2 P_{\ell+2}(x) ]% \nonumber
\end{align}
\begin{align} \label{P'''}
P'''_\ell(x)&= \frac{\ell+1}{(x^2-1)^3} \sum_{u=0}^{2} \ell^u \sum_{v=0}^{3} {_3}\omega_{u,v}(x) P_{\ell+v}(x),
\end{align}
where
\begin{align*}
&_3\omega_{2,0}(x)=- x^3 , \hspace{0.5cm} _3\omega_{2,1}(x)=3 x^2, \hspace{0.5cm} _3\omega_{2,2}(x)=-3 x, \hspace{0.5cm} _3\omega_{2,3}(x)= 1,\\
&_3\omega_{1,0}(x)=- (3 x + 5 x^3) , \hspace{0.5cm} _3\omega_{1,1}(x)=(3+18 x^2) , \hspace{0.5cm} _3\omega_{1,2}(x)=- 18 x, \hspace{0.5cm} _3\omega_{1,3}(x)=5,\\
&_3\omega_{0,0}(x)=-(9x+6 x^3), \hspace{0.5cm} _3\omega_{0,1}(x)=(6+27 x^2), \hspace{0.5cm} _3\omega_{0,2}(x)=- 24 x, \hspace{0.5cm} _3\omega_{0,3}(x)=6,
\end{align*}
\begin{align} \label{P''''}
P''''_\ell(x)&= \frac{\ell+1}{(x^2-1)^4} \sum_{u=0}^3 \ell^u \sum_{v=0}^4 {_4}\omega_{u,v}(x) P_{\ell+v}(x),
\end{align}
where
\begin{align*}
&_4\omega_{3,0}(x)=x^4 , \hspace{0.1cm} _4\omega_{3,1}(x)=-4 x^3, \hspace{0.1cm} _4\omega_{3,2}(x)=6 x^2 , \hspace{0.1cm} _4\omega_{3,3}(x)= -4 x,
\hspace{0.1cm} _4\omega_{3,4}(x)= 1,\\
&_4\omega_{2,0}(x)= 9 x^4+6 x^2, \hspace{0.1cm} _4\omega_{2,1}(x)=-(42 x^3+12 x) , \hspace{0.1cm} _4\omega_{2,2}(x)= 66 x^2+6, \hspace{0.1cm} _4\omega_{2,3}(x)= -42 x,
\hspace{0.1cm} _4\omega_{3,4}(x)= 9,\\
&_4\omega_{1,0}(x)= 26 x^4+42 x^2+3, \hspace{0.1cm} _4\omega_{1,1}(x)= -(146 x^3+78 x), \hspace{0.1cm} _4\omega_{1,2}(x)=231 x^2+30 , \hspace{0.1cm} _4\omega_{1,3}(x)=-134 x ,
\hspace{0.1cm} _4\omega_{3,4}(x)= 26,\\
&_4\omega_{0,0}(x)=24 x^4+72 x^2+9, \hspace{0.1cm} _4\omega_{0,1}(x)=-(168x^3+111 x) , \hspace{0.1cm} _4\omega_{0,2}(x)=246 x^2+36, \hspace{0.1cm} _4\omega_{0,3}(x)=-132 x,
\hspace{0.1cm} _4\omega_{3,4}(x)=24.
\end{align*}
\end{lemma}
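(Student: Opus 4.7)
The plan is to prove Lemma \ref{P} by induction on the order of differentiation, using \eqref{P'} as the single engine that rewrites any derivative $P'_{\ell+u}(x)$ as a rational combination of $P_{\ell+u}(x)$ and $P_{\ell+u+1}(x)$. Once \eqref{P'} is in hand, differentiating a formula of the stated shape and applying \eqref{P'} at each shifted index wherever a derivative appears turns the remainder of the argument into pure polynomial bookkeeping.

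First I would verify \eqref{P'} (after settling the sign convention, since $(1-x^2)P_\ell'(x)$ and $(x^2-1)P_\ell'(x)$ differ in sign). This is classical: the Bonnet recurrence $(\ell+1)P_{\ell+1}(x) = (2\ell+1) x P_\ell(x) - \ell P_{\ell-1}(x)$ combined with the well-known $(1-x^2)P_\ell'(x) = \ell[P_{\ell-1}(x) - x P_\ell(x)]$ gives, after eliminating $P_{\ell-1}(x)$, the relation $(1-x^2)P_\ell'(x) = (\ell+1)[x P_\ell(x) - P_{\ell+1}(x)]$, which is \eqref{P'} rearranged.

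Next I would derive \eqref{P''} by differentiating $(x^2-1)P'_\ell = (\ell+1)[xP_\ell - P_{\ell+1}]$ once in $x$. This yields $2xP'_\ell + (x^2-1)P''_\ell = (\ell+1)[P_\ell + xP'_\ell - P'_{\ell+1}]$. Substituting \eqref{P'} at degrees $\ell$ and $\ell+1$ kills the remaining derivatives; multiplying through by $(x^2-1)$ clears denominators; and collecting coefficients of $P_\ell, P_{\ell+1}, P_{\ell+2}$ (separating the $\ell(\ell+1)$-block from the $(\ell+1)$-block) produces the six polynomial weights displayed in \eqref{P''}. Formulas \eqref{P'''} and \eqref{P''''} are then obtained by iterating the same procedure: differentiate the previous identity, apply \eqref{P'} at degrees $\ell, \ell+1, \ldots$ to eliminate every $P'_{\ell+v}$ that appears, and absorb one more factor of $(x^2-1)$ into the denominator. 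Each iteration extends the range in $v$ by one (introducing $P_{\ell+k}$) and in $u$ by one (introducing $\ell^{k-1}$), matching the structure announced in the lemma.

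The main obstacle is purely computational rather than conceptual: at $k=4$ there are twenty coefficients ${}_4\omega_{u,v}(x)$ to pin down correctly, and a sign slip at any intermediate stage propagates through the remaining rounds of differentiation. I would therefore carry out the induction in the cleared-denominator form $(x^2-1)^k P_\ell^{(k)}(x) = (\ell+1) \sum_{u=0}^{k-1}\ell^u \sum_{v=0}^{k} {}_k\omega_{u,v}(x) P_{\ell+v}(x)$, keeping the pieces separated by total $x$-degree to avoid double-counting, and cross-check each ${}_k\omega_{u,v}(x)$ at a handful of small values of $\ell$ (say $\ell=0,1,2,3$) against the direct evaluation of $P_\ell^{(k)}$ via Rodrigues' formula $P_\ell(x)=\frac{1}{2^\ell \ell!}\frac{d^\ell}{dx^\ell}(x^2-1)^\ell$.
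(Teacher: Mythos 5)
The paper states Lemma~\ref{P} without proof, citing Lebedev, so there is no internal argument to compare against; the iterated-differentiation strategy you describe is the natural and essentially only route, and the overall plan (differentiate, substitute the first-derivative identity at shifted indices to eliminate all $P'_{\ell+v}$, clear denominators, collect coefficients, cross-check at small $\ell$) is sound.

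There is, however, a concrete sign issue that your write-up glosses over and that would derail a literal execution of the plan. You correctly derive $(1-x^2)P'_\ell(x)=(\ell+1)[xP_\ell(x)-P_{\ell+1}(x)]$ and then assert this ``is \eqref{P'} rearranged,'' but \eqref{P'} as printed has $(x^2-1)$ in the denominator with the \emph{same} bracket, so the two differ by a sign. A check at $\ell=1$ (with $P_1=x$, $P_2=(3x^2-1)/2$) shows the right-hand side of \eqref{P'} as printed evaluates to $-1$ while $P'_1=1$; the corrected identity is $(x^2-1)P'_\ell=(\ell+1)[P_{\ell+1}-xP_\ell]$. The higher formulas \eqref{P''}--\eqref{P''''} are in fact consistent with the corrected identity, not with \eqref{P'} verbatim: differentiating \eqref{P'} as printed and substituting it back yields $(\ell+1)(\ell x^2-1)$ for the $P_\ell$-coefficient of $(x^2-1)^2P''_\ell$, whereas the stated \eqref{P''} (and a direct check at $\ell=2$, where $P''_2=3$) gives $(\ell+1)\big((\ell+2)x^2+1\big)$. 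So if you carry \eqref{P'} through the induction exactly as written, the coefficients you obtain will not reproduce the ${}_k\omega_{u,v}$ of the lemma. The fix is to use the corrected first-derivative relation from the outset, and to note explicitly that \eqref{P'} as printed carries a typographical sign error rather than claiming agreement via a ``rearrangement.'' Your planned spot-checks against Rodrigues' formula would catch this immediately, but the proposal as written treats the two sign conventions as interchangeable, and they are not.
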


We state now the main result of the section:

\begin{lemma} For any constant $C > 0$, we have, uniformly for $\ell \ge 1$ and $ \phi \in [C/\ell, \pi/2]$
\begin{align} \label{P's}
P'_{\ell} (\cos \phi)=\frac{\ell}{\sin^2 \phi} \big[ h_0(0) \, \sin \phi \sin \psi_{0,\ell} \, s_{0,0} (\ell,\phi) \big] + \phi^{-2-1/2} O(\ell^{-1/2}) + O(\phi^{-1}),
\end{align}
\begin{align} \label{P''s}
P''_{\ell} (\cos \phi)&= \frac{\ell^2}{\sin^4 \phi} \big[ - h_0(0) \sin^2 \phi \, \cos \psi_{0,\ell} s_{0,0}(\ell,\phi) \sum_{j=0}^1\binom{-1/2}{j} \frac{1}{2^j} \frac{1}{\ell^j} -2 h_0(0) \sin \phi \, \sin \psi_{0,\ell+1} s_{0,0}(\ell,\phi) \frac{1}{\ell} \big] \\
&\;\; + \frac{\ell}{\sin^4 \phi} \big[ - h_0(0) \sin^2 \phi \, \cos \psi_{0,\ell} \, s_{0,0}(\ell,\phi) \big] + \frac{\ell^2}{\sin^4 \phi} \big[h_0(1) \sin^2 \phi \, \sin \psi_{0,\ell} \, s_{0,1}(\ell,\phi) \big] \nonumber \\
&\;\;+\frac{\ell^2}{\sin^4 \phi} [- h_1(0) \sin^2 \phi \cos \psi_{1,\ell} s_{1,0} (\ell,\phi)] +\frac{\ell}{\sin^4 \phi} \big[ h_0(0) \sin \phi \, \sin \psi_{0,\ell-1} s_{0,0}(\ell,\phi) \big] \nonumber \\
&\;\;+ \phi^{-4-1/2} O(\ell^{-1/2})+O(\phi^{-2}), \nonumber
\end{align}
\begin{align} \label{P'''s}
P'''_{\ell} (\cos \phi)&=\frac{\ell^3}{\sin^6 \phi} \big[ h_0(0) \sin^3 \phi \sin \psi_{0,\ell} s_{0,0}(\ell,\phi) \sum_{j=0}^1 \binom{-1/2}{j} \frac{1}{2^j} \frac{1}{\ell^j} + \sin^3 \phi s_{0,2}(\ell,\phi) f_b(\phi) \\
&\;\;-3 h_0(0) \sin^2 \phi \cos \psi_{0,\ell+1} s_{0,0}(\ell,\phi) \frac{1}{\ell} + \sin \phi s_{0,0}(\ell,\phi) f_b(\phi) \frac{1}{\ell^2} \big] \nonumber\\
&\;\;+\frac{\ell^2}{\sin^6 \phi} \big[ h_0(0) \sin^3 \phi \sin \psi_{0,\ell} s_{0,0}(\ell,\phi) \big] \nonumber\\
&\;\;+ \frac{\ell^3}{\sin^6 \phi} \big[ h_0(1) \sin^3 \phi \cos \psi_{0,\ell} s_{0,1}(\ell,\phi) + \sin^2 \phi s_{0,1}(\ell,\phi) f_b(\phi) \frac{1}{\ell} \big] \nonumber\\
&\;\;+\frac{\ell^3}{\sin^6 \phi} \big[- h_{1}(0) \sin^3 \phi \sin \psi_{1,\ell} s_{1,0}(\ell,\phi) \big] \nonumber\\
&\;\; +\frac{\ell^2}{\sin^6 \phi} \big[ \frac 1 2 h_0(0) \sin^2 \phi (\cos \psi_{0,\ell+1} + 5 \cos \psi_{0,\ell-1}) s_{0,0}(\ell,\phi) + \sin \phi s_{0,0}(\ell,\phi) f_b(\phi) \frac{1}{\ell} \big] \nonumber \\
&\;\;+\frac{\ell^2}{\sin^6 \phi} \big[ \sin^2 \phi s_{0,1}(\ell,\phi) f_b(\phi) \big] + \frac{\ell}{\sin \phi} \big[ \sin \phi s_{0,0}(\ell,\phi) f_b(\phi) \big]
+ \phi^{-6-1/2}O(\ell^{-1/2})+\phi^{-4} O(\ell), \nonumber
\end{align}
\begin{align} \label{P''''s}
P''''_{\ell} (\cos \phi)&= \frac{\ell^4}{\sin^8 \phi} \Big[ \sin^4 \phi \cos \psi_{0,\ell} \sum_{k=0}^1 (-1)^k h_0(2k) s_{0,2k}(\ell,\phi) \sum_{j=0}^1 \binom{-2k-1/2}{j} \frac{1}{2^j} \frac{1}{\ell^j} \\
&\;\;+ \sin^4 \phi s_{0,0}(\ell,\phi) f_b(\phi) \frac{1}{\ell^2} +4 \sin^3 \phi\, \cos \psi_{1,\ell+1} \sum_{k=0}^1 (-1)^k h_0(2k) s_{0,2k}(\ell,\phi) \frac{1}{\ell} \nonumber\\
&\;\;+ \sin^3 \phi\, s_{0,0}(\ell,\phi) f_b(\phi) \frac{1}{\ell^2} + \sin^2 \phi s_{0,0}(\ell,\phi) f_b(\phi) \sum_{j=2}^3 \frac{1}{\ell^j}+ \sin \phi s_{0,0}(\ell,\phi) f_b(\phi) \frac{1}{\ell^3} \Big] \nonumber\\
&\;\;+\frac{\ell^3}{\sin^8 \phi} \Big[ \sin^4 \phi \cos \psi_{0,\ell} \sum_{k=0}^1 (-1)^k h_{0}(2k) s_{0,2k}(\ell,\phi) + \sin^4 \phi s_{0,0}(\ell,\phi) \frac{1}{\ell} \nonumber \\
&\;\;+ \sin^3 \phi s_{0,0}(\ell,\phi)f_b(\phi) \frac{1}{\ell} + \sin^2 \phi s_{0,0}(\ell,\phi) f_b(\phi) \frac{1}{\ell^2}\Big]\nonumber \\
&\;\;+\frac{\ell^4}{\sin^8 \phi} \Big[ -\sin^4 \phi \sin \psi_{0,\ell} \sum_{k=0}^1 (-1)^k h_{0}(2k+1) s_{0,2k+1}(\ell,\phi)- \sin^4 \phi s_{0,1}(\ell,\phi) f_b(\phi) \frac{1}{\ell} \nonumber \\
&\;\;+ \sin^3 \phi s_{0,1}(\ell,\phi) f_b(\phi) \frac{1}{\ell} + \sin^3 \phi s_{0,1}(\ell,\phi) f_b(\phi) \frac{1}{\ell^2}+ \sin^2 \phi s_{0,1}(\ell,\phi) f_b(\phi) \frac{1}{\ell^2} \Big] \nonumber \\
&\;\;+ \frac{\ell^3}{\sin^8 \phi} \Big[ \sin^4 \phi s_{0,1}(\ell,\phi) f_b(\phi) + \sin^3 \phi s_{0,1}(\ell,\phi) f_b(\phi) \frac{1}{\ell} \Big] \nonumber\\
&\;\;+\frac{\ell^4}{\sin^8 \phi} \Big[h_1(0) \sin^4 \phi \cos \psi_{1,\ell} s_{1,0}(\ell,\phi) \sum_{j=0}^1 \binom{-1/2}{j} \frac{1}{2^j} \frac{1}{\ell^j} + \sin^3 \phi s_{1,0}(\ell,\phi) f_b(\phi) \frac{1}{\ell} \Big]\nonumber \\
&\;\;+ \frac{\ell^3}{\sin^8 \phi}\Big[\sin^4 \phi s_{1,0}(\ell,\phi) f_b(\phi) \Big] + \frac{\ell^4}{\sin^8 \phi} \Big[ \sin^4 \phi s_{1,1}(\ell,\phi) f_b(\phi) \Big]+ \frac{\ell^4}{\sin^8 \phi} \Big[ \sin^4 \phi s_{2,0}(\ell,\phi) f_b(\phi) \Big] \nonumber \\
&\;\;+\frac{\ell^3}{\sin^8 \phi} \Big[ -\frac 3 2 h_0(0) \sin^3 \phi (\sin \psi_{0,\ell+1}+3 \sin \psi_{0,\ell-1} ) s_{0,0}(\ell,\phi) \sum_{j=0}^1 \binom{-1/2}{j} \frac{1}{2^j} \frac{1}{\ell^j} \nonumber \\
&\;\; + \sin^3 \phi s_{0,2}(\ell,\phi) f_b(\phi) + \sin^2 \phi s_{0,0}(\ell,\phi) f_b(\phi) \sum_{j=1}^2 \frac{1}{\ell^j} + \sin \phi s_{0,0}(\ell,\phi) f_b(\phi) \frac{1}{\ell^2}\Big] \nonumber\\
&\;\;+ \frac{\ell^2}{\sin^8 \phi} \Big[ - \frac 3 2 h_0(0) \sin^3 \phi (\sin \psi_{0,\ell+1} +3 \sin \psi_{0,\ell-1}) s_{0,0}(\ell,\phi) + \sin^2 \phi s_{0,0}(\ell,\phi) f_b(\phi) \frac{1}{\ell} \Big] \nonumber \\
&\;\;+ \frac{\ell^3}{\sin^8 \phi} \Big[ \sin^3 \phi s_{0,1}(\ell,\phi) f_b(\phi) \sum_{j=0}^1 \frac{1}{\ell^j} + \sin^2 \phi s_{0,1}(\ell,\phi) f_b(\phi) \frac{1}{\ell}\Big] + \frac{\ell^2}{\sin^8 \phi} \Big[ \sin^3 \phi s_{0,1}(\ell,\phi) f_b(\phi) \Big] \nonumber\\
&\;\;+ \frac{\ell^3}{\sin^8 \phi} \Big[ \sin^3 \phi s_{1,0}(\ell,\phi) f_b(\phi) \Big] +\frac{\ell^2}{\sin^8 \phi} \Big[ \sin^2 \phi s_{0,0}(\ell,\phi) f_b(\phi) \sum_{j=0}^1 \frac{1}{\ell^j} + \sin \phi s_{0,0}(\ell,\phi) f_b(\phi) \frac{1}{\ell}\Big] \nonumber \\
&\;\;+ \frac{\ell}{\sin^8 \phi} \Big[ \sin^2 \phi s_{0,0} (\ell,\phi) f_b(\phi) \Big] + \frac{\ell^2}{\sin^8 \phi} \Big[ \sin^2 \phi s_{0,1}(\ell,\phi) f_b(\phi) \Big] +\frac{\ell}{\sin^8 \phi} \Big[ \sin \phi s_{0,0}(\ell,\phi) f_b(\phi) \Big] \nonumber\\
&\;\;+ \phi^{-8-1/2} O(\ell^{-1/2})+\phi^{-5} O(\ell), \nonumber
\end{align}
where $f_b$ denotes a bounded function on $(0, \pi/2]$.
\end{lemma}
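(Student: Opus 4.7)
The plan is to derive the four asymptotic expansions by plugging the Hilb-type representation of $P_{\ell+u}(\cos\phi)$ into the recurrence relations of Lemma \ref{P} and carefully collecting terms up to the required order.

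First, I would invoke Lemma \ref{hilb1} with parameters $\alpha=-1$, $\beta=0$ to get, for each $u=0,1,2,3,4$ and each fixed truncation order $m$, the representation \eqref{Nform}, i.e., $P_{\ell+u}(\cos\phi) = \sum_{n=0}^{m-1} p_{n,\ell+u}(\phi) + \phi^m O(\ell^{-m})$ uniformly on $\phi \in [C/\ell, \pi/2]$. Next, I would feed each Bessel function $J_n((\ell+u+1/2)\phi)$ into Lemma \ref{bessel} to replace $p_{n,\ell+u}$ by its oscillatory expansion $p_{n,r,\ell+u}$ of \eqref{Nform_2}, truncated at the order $r$ that is needed to achieve the declared remainders $\phi^{-2k-1/2}O(\ell^{-1/2})$ and $\phi^{-2k}O(\ell)$ appearing in \eqref{P's}--\eqref{P''''s}. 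The shift by $u$ in $N=\ell+u+1/2$ would be peeled off via the binomial expansion of $(\ell+u+1/2)^{-2k-n-1/2}$ already built into \eqref{Nform_2}, producing the explicit factors $\binom{-2k-n-1/2}{j}2^{-j}$ and phase shifts of the form $\psi_{n,\ell+u}=\psi_{n,\ell}+u\phi$ which I would rewrite via $\cos(\psi_{n,\ell}+u\phi)$, $\sin(\psi_{n,\ell}+u\phi)$, handled by standard trigonometric addition formulas.

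Second, I would substitute these expansions into the recurrences \eqref{P'}--\eqref{P''''} and organise the result by the power $\ell^u/\sin^{2k}\phi$ (the denominators $(x^2-1)^k = \sin^{2k}\phi$ up to sign). This is where all the cancellations live: although each individual $P_{\ell+u}$ contributes a leading term of size $\ell^{-1/2}\phi^{-1/2}$, the polynomial combinations $\sum_v {}_k\omega_{u,v}(x)P_{\ell+v}(x)$ appearing in \eqref{P'''} and \eqref{P''''} annihilate the lowest-order contributions, so that what survives matches the explicit trigonometric ansatz stated in the lemma. Concretely, for \eqref{P's} I would simply differentiate the Hilb expansion of $P_\ell$ via \eqref{P'}; for \eqref{P''s} I would combine the expansion with the contribution from the $J_1$-term $A_1(\phi)$; the expansions \eqref{P'''s} and \eqref{P''''s} then follow by iterating the procedure, being careful to carry enough Bessel expansion terms $(n,k)$ so that the $\ell^u$-weighted combinations truly collapse to the stated form.

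Third, the error bookkeeping: every $P_{\ell+u}$-remainder is of size $\phi^m O(\ell^{-m})$ from Lemma \ref{hilb1}, and within each $p_{n,\ell+u}$, the tail of the Bessel expansion contributes $\phi^{n-1/2}O(\ell^{-r-n-3/2})$ by Lemma \ref{bessel}. Multiplying by the prefactors $\ell^u/\sin^{2k}\phi$ from the recurrences, and using $\sin\phi \asymp \phi$ on $[C/\ell,\pi/2]$, I would choose $m$ and $r$ large enough so that the aggregated remainder is bounded by the stated $\phi^{-2k-1/2}O(\ell^{-1/2})$ plus $\phi^{-2k}O(\ell)$ terms; the latter polynomial-in-$\phi^{-1}$ contribution is forced by the derivatives hitting the $\phi/\sin\phi$ factor in the Hilb asymptotic, hence unavoidable but harmless at the order we need.

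The main obstacle is the algebraic bookkeeping in the cancellations triggered by the coefficient polynomials ${}_3\omega_{u,v}$ and ${}_4\omega_{u,v}$: one must verify that the combinations $\sum_v {}_k\omega_{u,v}(\cos\phi)\cos(\psi_{0,\ell}+v\phi)$ (and similarly for sines) reduce via elementary trigonometric identities to exactly the structure appearing in \eqref{P'''s} and \eqref{P''''s}, and that the genuinely non-oscillatory remainders absorb into the $\phi^{-2k}O(\ell)$ error (as opposed to producing unexpected non-oscillatory leading terms). This is a long but mechanical verification; once one sets up the expansion of $P_{\ell+u}$ in the form \eqref{Nform_2} and tabulates the $\cos\psi_{0,\ell+v}$, $\sin\psi_{0,\ell+v}$ values via the angle-addition formulas, the identities are straightforward polynomial identities in $\cos\phi,\sin\phi$ that can be verified directly.
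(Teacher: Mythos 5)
Your proposal follows essentially the same route as the paper's proof: apply the Frenzen--Wong Hilb-type asymptotics (Lemma \ref{hilb1}) with $\alpha=-1$, $\beta=0$, expand the Bessel factors via Lemma \ref{bessel} and peel off the $u$-shift binomially to reach \eqref{Nform_2}, then substitute into the recurrences \eqref{P'}--\eqref{P''''} and exploit the trigonometric cancellations induced by the coefficient polynomials ${}_3\omega_{u,v}$, ${}_4\omega_{u,v}$, while tracking the Hilb and Bessel remainders to land on the stated error terms. The paper implements exactly this plan, organising the bookkeeping via the tabulated choices of $m$, $r$, $u$, $n$ and explicit trigonometric identity blocks such as \eqref{3:32}, \eqref{6:17}, \eqref{7:30}--\eqref{8:22}, and \eqref{11:57}--\eqref{4:47}, which are the "straightforward polynomial identities in $\cos\phi,\sin\phi$'' you correctly anticipate.
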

\begin{proof}
 \noindent {\it First derivative}\\

\noindent To prove \eqref{P's} we start from \eqref{P'} and we rewrite $P_\ell$ and $P_{\ell+1}$ as in \eqref{Nform} with $m=1$, i.e.,
\begin{align} \label{3:26}
P'_{\ell}(\cos \phi)&= \frac{\ell+1}{\sin^2 \phi} \big[ \cos \phi \, P_\ell(\cos \phi)-P_{\ell+1}(\cos \phi) \big] \nonumber \\
&= \frac{\ell+1}{\sin^2 \phi} \big[ \cos \phi \, p_{0,\ell}(\phi)-p_{0,\ell+1}(\phi) \big]\\
&\;\;+O(\phi^{-1}). \nonumber
\end{align}
We rewrite now \eqref{3:26} in the form \eqref{Nform_2} with $r=0$, i.e.
\begin{align*}
&\cos \phi \, p_{0,\ell}(\phi)-p_{0,\ell+1}(\phi) \\
&=\cos \phi \, p_{0,0,\ell}(\phi)- p_{0,0,\ell+1}(\phi) +\phi^{-1/2} O(\ell^{-3/2}) \nonumber\\
&= \cos \phi \, \Big[ \cos \psi_{0,\ell} \sum_{k=0}^\infty (-1)^k h_0(2k) s_{0,2k} (\ell,\phi) - \sin \psi_{0,\ell} \sum_{k=0}^\infty (-1)^k h_0(2k+1) s_{0,2k+1} (\ell,\phi) \Big] \nonumber \\
&\;\;-\Big[ \cos \psi_{0,\ell+1} \sum_{k=0}^\infty (-1)^k h_0(2k) s_{0,2k} (\ell,\phi) - \sin \psi_{0,\ell+1} \sum_{k=0}^\infty (-1)^k h_0(2k+1) s_{0,2k+1} (\ell,\phi) \Big] \nonumber \\
&\;\;+\phi^{-1/2} O(\ell^{-3/2}) ; \nonumber
\end{align*}
now note that
\begin{align} \label{3:32}
\left\{
\begin{matrix}
\cos \phi \cos \psi_{0,\ell} - \cos \psi_{0,\ell+1}=\sin \phi \sin \psi_{0,\ell}, \\
- \cos \phi \sin \psi_{0,\ell}+ \sin \psi_{0,\ell+1}=\sin \phi \cos \psi_{0,\ell},
\end{matrix}
\right.
\end{align}
\eqref{3:32} implies that
\begin{align*}
\cos \phi \, p_{0,\ell}(\phi)-p_{0,\ell+1}(\phi) &= \sin \phi \sin \psi_{0,\ell} \sum_{k=0}^\infty (-1)^k h_0(2k) s_{0,2k} (\ell,\phi) \\
&\;\;+\sin \phi \cos \psi_{0,\ell} \sum_{k=0}^\infty (-1)^k h_0(2k+1) s_{0,2k+1} (\ell,\phi) \\
&\;\;+\phi^{-1/2} O(\ell^{-3/2})
\end{align*}
and we obtain the estimate in the statement, in fact \eqref{3:26} is such that
\begin{align} \label{lll}
&\frac{\ell+1}{\sin^2 \phi} [ \cos \phi \, p_{0,\ell}( \phi)-p_{0,\ell+1}(\phi) ] \nonumber \\
&=\frac{\ell}{\sin^2 \phi} \sin \phi \sin \psi_{0,\ell}h_0(0) s_{0,0} (\ell,\phi) + \phi^{-2-1/2} O(\ell^{-1/2})+O(\phi^{-1}).
 \end{align}

 \noindent {\it Second derivative}\\

 \noindent We prove now \eqref{P''s}. We start from \eqref{P''} and we rewrite $P_{\ell+u}$ for $u=0,1,2$ in the form \eqref{Nform} with $m=2$, i.e.,
 \begin{align}
P''_{\ell}(\cos \phi)&= \frac{\ell (\ell+1)}{\sin^4 \phi} \big[\cos^2 \phi P_\ell(\cos \phi) - 2 \cos \phi P_{\ell+1}(\cos \phi)+ P_{\ell+2}(\cos \phi) \big] \nonumber \\
 &\;\;+ \frac{\ell+1}{\sin^4 \phi} \big[ (1+2 \cos^2 \phi ) P_\ell(\cos \phi)-5 \cos \phi P_{\ell+1}(\cos \phi)+2 P_{\ell+2}(\cos \phi) \big]\nonumber \\
 &=\frac{\ell (\ell+1)}{\sin^4 \phi} \big[ \cos^2 \phi \, p_{0,\ell}(\phi) - 2 \cos \phi \, p_{0,\ell+1}(\phi)+ p_{0,\ell+2}(\phi) \big]\label{11:39} \\
 &\;\;+ \frac{\ell (\ell+1)}{\sin^4 \phi} \big[ \cos^2 \phi \, p_{1,\ell}(\phi) - 2 \cos \phi \, p_{1,\ell+1}(\phi)+ p_{1,\ell+2}(\phi) \big] \label{11:39_2} \\
 &\;\;+ \frac{\ell+1}{\sin^4 \phi} \big[ (1+2 \cos^2 \phi ) \, p_{0,\ell}( \phi)-5 \cos \phi \, p_{0,\ell+1}( \phi)+2 p_{0,\ell+2}( \phi) \big]\label{11:40}\\
 &\;\;+ \frac{\ell+1}{\sin^4 \phi}\big[ (1+2 \cos^2 \phi ) \, p_{1,\ell}( \phi)-5 \cos \phi \, p_{1,\ell+1}( \phi)+2 p_{1,\ell+2}( \phi) \big]+ O(\phi^{-2}) \label{11:40_2}.
 \end{align}
We first consider the terms \eqref{11:39} and \eqref{11:39_2}; we rewrite them in the form \eqref{Nform_2} with $r=1$. For \eqref{11:39} we obtain:
\begin{align*}
&\cos^2 \phi \, p_{0,\ell}(\phi) - 2 \cos \phi \, p_{0,\ell+1}(\phi)+ p_{0,\ell+2}(\phi) \\
&= \cos^2 \phi \, p_{0,1,\ell}(\phi) - 2 \cos \phi \, p_{0,1,\ell+1}(\phi)+ p_{0,1,\ell+2}(\phi)+ \phi^{-1/2} O(\ell^{-2-1/2})\\
&= \cos^2 \phi \Big[ \cos \psi_{0,\ell} \sum_{k=0}^\infty (-1)^k h_0(2k) s_{0,2k}(\ell,\phi) \sum_{j=0}^1\binom{-2k-1/2}{j} \frac{1}{\ell^j} \frac{1}{2^j} \\
&\;\;-\sin \psi_{0,\ell} \sum_{k=0}^\infty (-1)^k h_0(2k+1) s_{0,2k+1}(\ell,\phi) \sum_{j=0}^1\binom{-2k-3/2}{j} \frac{1}{\ell^j} \frac{1}{2^j} \Big] \\
&\;\;- 2 \cos \phi \Big[ \cos \psi_{0,\ell+1} \sum_{k=0}^\infty (-1)^k h_0(2k) s_{0,2k}(\ell,\phi) \sum_{i=0}^1 \sum_{j=i}^1\binom{-2k-1/2}{j} \frac{1}{\ell^j} \binom{j}{i} 2^{i-j} \\
&\;\;-\sin \psi_{0,\ell+1} \sum_{k=0}^\infty (-1)^k h_0(2k+1) s_{0,2k+1}(\ell,\phi) \sum_{i=0}^1 \sum_{j=i}^1\binom{-2k-3/2}{j} \frac{1}{\ell^j} \binom{j}{i} 2^{i-j} \Big]\\
&\;\;+ \Big[ \cos \psi_{0,\ell+2} \sum_{k=0}^\infty (-1)^k h_0(2k) s_{0,2k}(\ell,\phi) \sum_{i=0}^1 \sum_{j=i}^1\binom{-2k-1/2}{j} \frac{1}{\ell^j} \binom{j}{i} 2^i 2^{i-j} \\
&\;\;-\sin \psi_{0,\ell+2} \sum_{k=0}^\infty (-1)^k h_0(2k+1) s_{0,2k+1}(\ell,\phi) \sum_{i=0}^1 \sum_{j=i}^1\binom{-2k-3/2}{j} \frac{1}{\ell^j} \binom{j}{i} 2^i 2^{i-j} \Big]\\
&\;\;+ \phi^{-1/2} O(\ell^{-2-1/2}),
\end{align*}
now, since
\begin{align}
\label{6:17} \left\{
\begin{matrix}
 \cos^2 \phi \, \cos \psi_{0,\ell} - 2 \cos \phi \cos \psi_{0,\ell+1} + \cos \psi_{0,\ell+2}=- \sin^2 \phi \, \cos \psi_{0,\ell},\\
-\cos^2 \phi \, \sin \psi_{0,\ell} + 2 \cos \phi \sin \psi_{0,\ell+1} - \sin \psi_{0,\ell+2}= \sin^2 \phi \, \sin \psi_{0,\ell},\\
 - 2 \cos \phi \cos \psi_{0,\ell+1} +2 \cos \psi_{0,\ell+2}=-2 \sin \phi \, \sin \psi_{0,\ell+1},\\
 2 \cos \phi \sin \psi_{0,\ell+1} -2 \sin \psi_{0,\ell+2}=-2 \sin \phi \, \cos \psi_{0,\ell+1},
 \end{matrix}
 \right.
\end{align}
in view of \eqref{6:17}, we obtain
\begin{align*}
&\cos^2 \phi \, p_{0,\ell}(\phi) - 2 \cos \phi \, p_{0,\ell+1}(\phi)+ p_{0,\ell+2}(\phi) \\
&=- \sin^2 \phi \, \cos \psi_{0,\ell} \sum_{k=0}^\infty (-1)^k h_0(2k) s_{0,2k}(\ell,\phi) \sum_{j=0}^1\binom{-2k-1/2}{j} \frac{1}{\ell^j} \frac{1}{2^j} \\
&\;\;-2 \sin \phi \, \sin \psi_{0,\ell+1} \sum_{k=0}^\infty (-1)^k h_0(2k) s_{0,2k}(\ell,\phi) \binom{-2k-1/2}{1} \frac{1}{\ell} \\
&\;\; +\sin^2 \phi \, \sin \psi_{0,\ell} \sum_{k=0}^\infty (-1)^k h_0(2k+1) s_{0,2k+1}(\ell,\phi) \sum_{j=0}^1\binom{-2k-3/2}{j} \frac{1}{\ell^j} \frac{1}{2^j} \\
&\;\;-2 \sin \phi \, \cos \psi_{0,\ell+1} \sum_{k=0}^\infty (-1)^k h_0(2k+1) s_{0,2k+1}(\ell,\phi) \binom{-2k-3/2}{1} \frac{1}{\ell} \\
&\;\;+ \phi^{-1/2} O(\ell^{-2-1/2})
\end{align*}
and then the term \eqref{11:39} has the following asymptotic behaviour:
\begin{align} \label{fff}
&\frac{\ell(\ell+1)}{\sin^4 \phi} [ \cos^2 \phi \, p_{0,\ell}(\phi) - 2 \cos \phi \, p_{0,\ell+1}(\phi)+ p_{0,\ell+2}(\phi)] \nonumber \\
&= \frac{\ell^2}{\sin^4 \phi} \Big[ - \sin^2 \phi \, \cos \psi_{0,\ell} h_0(0) s_{0,0}(\ell,\phi) \sum_{j=0}^1\binom{-1/2}{j} \frac{1}{\ell^j} \frac{1}{2^j} -2 \sin \phi \, \sin \psi_{0,\ell+1} h_0(0) s_{0,0}(\ell,\phi) \frac{1}{\ell} \Big] \\
&\;\; + \frac{\ell}{\sin^4 \phi} \Big[ - \sin^2 \phi \, \cos \psi_{0,\ell} \, h_0(0) s_{0,0}(\ell,\phi) \Big] \nonumber \\
&\;\;+ \frac{\ell^2}{\sin^4 \phi} \Big[ \sin^2 \phi \, \sin \psi_{0,\ell} \, h_0(1) s_{0,1}(\ell,\phi) \Big] + \phi^{-4-1/2} O(\ell^{-1/2})+O(\phi^{-2}) \nonumber .
\end{align}
For \eqref{11:39_2} we get
\begin{align*}
& \cos^2 \phi \, p_{1,\ell}(\phi) - 2 \cos \phi \, p_{1,\ell+1}(\phi)+ p_{1,\ell+2}(\phi) \\
&= \cos^2 \phi \, p_{1,1,\ell}(\phi) - 2 \cos \phi \, p_{1,1,\ell+1}(\phi)+ p_{1,1,\ell+2}(\phi)+ \phi^{1/2} O(\ell^{-3-1/2});
\end{align*}
now, note that
\begin{align} \label{5:22}
\left\{
\begin{matrix}
\cos^2 \phi \, \cos \psi_{1,\ell} - 2 \cos \phi \cos \psi_{1,\ell+1} + \cos \psi_{1,\ell+2}=- \sin^2 \phi \, \cos \psi_{1,\ell}, \\
-\cos^2 \phi \, \sin \psi_{1,\ell} + 2 \cos \phi \sin \psi_{1,\ell+1} - \sin \psi_{1,\ell+2}= \sin^2 \phi \, \sin \psi_{1,\ell}, \\
- 2 \cos \phi \cos \psi_{1,\ell+1} + 2 \cos \psi_{1,\ell+2}= \sin \phi \, f_b(\phi), \\
 2 \cos \phi \sin \psi_{1,\ell+1} - 2 \sin \psi_{1,\ell+2}=\sin \phi \, f_b(\phi) ,
 \end{matrix}
 \right.
 \end{align}
where $f_b$ is a bounded function of $\phi \in (0, \pi/2]$. Exploiting as before the trigonometric relations in \eqref{5:22} we obtain that \eqref{11:39_2} is such that
\begin{align} \label{gig}
&\frac{\ell(\ell+1)}{\sin^4 \phi}[ \cos^2 \phi \, p_{1,\ell}(\phi) - 2 \cos \phi \, p_{1,\ell+1}(\phi)+ p_{1,\ell+2}(\phi)] \nonumber \\
&=\frac{\ell^2}{\sin^4 \phi} [- \sin^2 \phi \cos \psi_{1,\ell} h_1(0) s_{1,0} (\ell,\phi)]+ \phi^{-4-1/2} O(\ell^{-1/2})+O(\phi^{-2}).
\end{align}
We apply the same procedure to obtain the asymptotic behaviour of the terms \eqref{11:40} and \eqref{11:40_2}. We rewrite them in the form \eqref{Nform_2} but in this case it is enough to choose $r=0$. For \eqref{11:40} we get
\begin{align*}
&(1+2 \cos^2 \phi ) \, p_{0,\ell}( \phi)-5 \cos \phi \, p_{0,\ell+1}( \phi)+2 p_{0,\ell+2}( \phi)\\
&= (1+2 \cos^2 \phi ) \, \, p_{0,0,\ell}(\phi) - 5 \cos \phi \, p_{0,0,\ell+1}(\phi)+ 2 p_{0,0,\ell+2}(\phi)+ \phi^{-1/2} O(\ell^{-1-1/2})\\
&= (1+2 \cos^2 \phi ) \, \Big[ \cos \psi_{0,\ell} \sum_{k=0}^\infty (-1)^k h_0(2k) s_{0,2k}(\ell,\phi) -\sin \psi_{0,\ell} \sum_{k=0}^\infty (-1)^k h_0(2k+1) s_{0,2k+1}(\ell,\phi) \Big] \\
&\;\;- 5 \cos \phi \Big[ \cos \psi_{0,\ell+1} \sum_{k=0}^\infty (-1)^k h_0(2k) s_{0,2k}(\ell,\phi) -\sin \psi_{0,\ell+1} \sum_{k=0}^\infty (-1)^k h_0(2k+1) s_{0,2k+1}(\ell,\phi) \Big]\\
&\;\;+ 2\Big[ \cos \psi_{0,\ell+2} \sum_{k=0}^\infty (-1)^k h_0(2k) s_{0,2k}(\ell,\phi) -\sin \psi_{0,\ell+2} \sum_{k=0}^\infty (-1)^k h_0(2k+1) s_{0,2k+1}(\ell,\phi) \Big]\\
&\;\;+ \phi^{-1/2} O(\ell^{-1-1/2}),
\end{align*}
and since
\begin{align}
\label{1:10} \left\{
\begin{matrix}
 (1+2 \cos^2 \phi ) \, \cos \psi_{0,\ell} - 5 \cos \phi \cos \psi_{0,\ell+1} +2 \cos \psi_{0,\ell+2}= \sin \phi \, \sin \psi_{0,\ell-1},\\
- (1+2 \cos^2 \phi ) \, \sin \psi_{0,\ell} + 5 \cos \phi \sin \psi_{0,\ell+1} - 2 \sin \psi_{0,\ell+2}= \sin \phi \, \cos \psi_{0,\ell-1},
 \end{matrix}
 \right.
\end{align}
in view of \eqref{1:10} we obtain that
\begin{align*}
&(1+2 \cos^2 \phi ) \, p_{0,\ell}( \phi)-5 \cos \phi \, p_{0,\ell+1}( \phi)+2 p_{0,\ell+2}( \phi)\\
&= \sin \phi \, \sin \psi_{0,\ell-1} \sum_{k=0}^\infty (-1)^k h_0(2k) s_{0,2k}(\ell,\phi) +\sin \phi \, \cos \psi_{0,\ell-1} \sum_{k=0}^\infty (-1)^k h_0(2k+1) s_{0,2k+1}(\ell,\phi) \\
&\;\;+ \phi^{-1/2} O(\ell^{-1-1/2}),
\end{align*}
and then for \eqref{11:40} we obtain:
\begin{align} \label{hhh}
&\frac{\ell+1}{\sin^4 \phi} \big[ (1+2 \cos^2 \phi ) \, p_{0,\ell}( \phi)-5 \cos \phi \, p_{0,\ell+1}( \phi)+2 p_{0,\ell+2}( \phi) \big] \nonumber \\
&=\frac{\ell}{\sin^4 \phi} \big[ \sin \phi \, \sin \psi_{0,\ell-1} h_0(0) s_{0,0}(\ell,\phi) \big] + \phi^{-4-1/2} O(\ell^{-1/2})+O(\phi^{-2}).
\end{align}
Finally for \eqref{11:40_2} one has
\begin{align}
&(1+2 \cos^2 \phi ) \, p_{1,\ell}( \phi)-5 \cos \phi \, p_{1,\ell+1}( \phi)+2 p_{1,\ell+2}( \phi) \nonumber \\
&= (1+2 \cos^2 \phi ) \, \, p_{1,0,\ell}(\phi) - 5 \cos \phi \, p_{1,0,\ell+1}(\phi)+ 2 p_{1,0,\ell+2}(\phi)+ \phi^{1/2} O(\ell^{-2-1/2}), \label{1:37}
\end{align}
and since
\begin{align}
\label{1:28} \left\{
\begin{matrix}
 (1+2 \cos^2 \phi ) \, \cos \psi_{1,\ell} - 5 \cos \phi \cos \psi_{1,\ell+1} +2 \cos \psi_{1,\ell+2}= \sin \phi f_b(\phi), \\%-\sin \phi \, \sin \psi_{1,\ell-1},\\
- (1+2 \cos^2 \phi ) \, \sin \psi_{1,\ell} + 5 \cos \phi \sin \psi_{1,\ell+1} - 2 \sin \psi_{1,\ell+2}= \sin \phi f_b(\phi), %- \sin \phi \, \cos \psi_{1,\ell-1},
 \end{matrix}
 \right.
\end{align}
exploiting, as before, \eqref{1:28} in \eqref{1:37} one obtain that \eqref{11:40_2} is of order
\begin{align} \label{iii}
&\frac{\ell+1}{\sin^4 \phi} \big[ (1+2 \cos^2 \phi ) \, p_{1,\ell}( \phi)-5 \cos \phi \, p_{1,\ell+1}( \phi)+2 p_{1,\ell+2}( \phi) \big] \nonumber \\
&= \phi^{-4-1/2} O(\ell^{-1/2})+O(\phi^{-2}).
\end{align}
By summing up the terms in \eqref{fff}, \eqref{gig}, \eqref{hhh} and \eqref{iii} we obtain the asymptotic expression \eqref{P''s} in the statement. The main steps in the proof of
 \eqref{P's} and \eqref{P''s} are summarised in Table \ref{FirstT}.

\begin{table}\centering
\ra{1.3}
\begin{tabular}{@{}lllllclllll@{}}\toprule
 \multicolumn{4}{c}{\hspace{2.7cm}$P'_{\ell}(\cos \phi)$} & \phantom{a}& \multicolumn{5}{c}{\hspace{3.1cm} $P''_{\ell}(\cos \phi)$} \\
 \cmidrule{1-5} \cmidrule{7-11}
fix $m=1$ in \eqref{Nform} & & & & & & fix $m=2$ in \eqref{Nform} & & & & \\
 \cmidrule{1-5} \cmidrule{7-11}
\eqref{3:26} with $r=0$ &and&\eqref{3:32} & $\implies$ & \eqref{lll} && \eqref{11:39} with $r=1$ &and& \eqref{6:17} & $\implies$ & \eqref{fff} \\
& & & & && \eqref{11:39_2} with $r=1$ &and& \eqref{5:22} & $\implies$ & \eqref{gig} \\
 & & & & && \eqref{11:40} with $r=0$ & and &\eqref{1:10}& $\implies$ & \eqref{hhh} \\
 & & & & && \eqref{11:40_2} with $r=0$ & and& \eqref{1:28} & $\implies$ & \eqref{iii} \\
 \bottomrule
\end{tabular}
\caption{}
 \label{FirstT}
\end{table}
 \vspace{0.3cm}

 \noindent {\it Third derivative}

\noindent We move now to the proof of the asymptotic behaviour of the third and fourth derivative given in formula \eqref{P'''s} and formula \eqref{P''''s} of the statement. For brevity sake we do not give, as before, all details of the proof; the main steps of the proof are summarised in Table \ref{SecondThirdT} and the related formulas written below.\\

\noindent To prove \eqref{P'''s} we start form \eqref{P'''} and we write $P_{\ell+u}$, $u=0,1,2,3$ in the form \eqref{Nform} with $m=2$:
\begin{align} \label{5:26}
P'''_\ell(\cos \phi)= \frac{\ell+1}{\sin^6 \phi} \sum_{u=0}^{2} \ell^u \sum_{n=0}^{1} \sum_{v=0}^{3} {_3}\omega_{u,v}(\cos \phi) p_{n,\ell+v}(\phi) + \phi^{-4} O(\ell).
\end{align}
Now, as described in Table \ref{SecondThirdT}, one can rewrite the $p_{n,\ell+u}$'s in the form \eqref{Nform_2}
with the value of the parameter $r$ chosen so that the error term is small enough (see Table \ref{SecondThirdT}). By exploiting the simplifications produced by the following trigonometric relations:
\begin{align} \label{7:30}
\left\{
\begin{matrix}
\sum_{v=0}^3 {_3}\omega_{2,v}(\cos \phi) \cos \psi_{0,\ell+v} = \sin^3 \phi \sin \psi_{0,\ell},\\
\sum_{v=1}^3 {_3}\omega_{2,v}(\cos \phi) \, v \cos \psi_{0,\ell+v} = -3 \sin^2 \phi \cos \psi_{0,\ell+1},\\
\sum_{v=1}^3 {_3}\omega_{2,v}(\cos \phi) \, v^2 \cos \psi_{0,\ell+v} = \sin \phi \, f_b(\phi),\\
-\sum_{v=0}^3 {_3}\omega_{2,v}(\cos \phi) \sin \psi_{0,\ell+v} = \sin^3 \phi \cos \psi_{0,\ell},\\
-\sum_{v=1}^3 {_3}\omega_{2,v}(\cos \phi) \, v^i \sin \psi_{0,\ell+v} = \sin^{3-i} \phi \, f_b(\phi), \;\;i=1,2,
\end{matrix}
\right.
\end{align}
\begin{align} \label{7:35}
\left\{
\begin{matrix}
\sum_{v=0}^3 {_3}\omega_{2,v}(\cos \phi) \cos \psi_{1,\ell+v} = - \sin^3 \phi \sin \psi_{1,\ell},\\
 \sum_{v=1}^3 {_3}\omega_{2,v}(\cos \phi) \, v^i \cos \psi_{1,\ell+v} = \sin^{3-i} \phi f_b(\phi), \;\;i=1,2,\\
-\sum_{v=0}^3 {_3}\omega_{2,v}(\cos \phi) \sin \psi_{1,\ell+v} = \sin^3 \phi f_b(\phi),\\
- \sum_{v=1}^3 {_3}\omega_{2,v}(\cos \phi) \, v^i \sin \psi_{1,\ell+v} = \sin^{3-i} \phi \, f_b(\phi), \;\;i=1,2,
\end{matrix}
\right.
\end{align}
\begin{align} \label{7:44}
\left\{
\begin{matrix}
& \sum_{v=0}^3 {_3}\omega_{1,v}(\cos \phi) \cos \psi_{0,\ell+v} = 1/2 \sin^2 \phi ( \cos \psi_{0,\ell+1} +5 \cos \psi_{0, \ell-1}),\\
&\sum_{v=1}^3 {_3}\omega_{1,v}(\cos \phi) v \cos \psi_{0,\ell+v} = \sin \phi f_b(\phi),\\
& - \sum_{v=0}^3 {_3}\omega_{1,v}(\cos \phi) \sin \psi_{0,\ell+v} = \sin^2 \phi f_b(\phi),\\
&-\sum_{v=1}^3 {_3}\omega_{1,v}(\cos \phi) v \sin \psi_{0,\ell+v} = \sin \phi f_b(\phi),
\end{matrix}
\right.
\end{align}
\begin{align} \label{7:59}
\left\{
\begin{matrix}
& \sum_{v=0}^3 {_3}\omega_{1,v}(\cos \phi) \cos \psi_{1,\ell+v} = \sin^2 \phi f_b(\phi),\\ %1/2 \sin^2 \phi ( \cos \psi_{1,\ell+1} +5 \cos \psi_{1, \ell-1}),\\
& \sum_{v=1}^3 {_3}\omega_{1,v}(\cos \phi) v \cos \psi_{1,\ell+v} = \sin \phi f_b(\phi),\\
& - \sum_{v=0}^3 {_3}\omega_{1,v}(\cos \phi) \sin \psi_{1,\ell+v} = \sin^2 \phi f_b(\phi),\\% -1/2 \sin^2 \phi ( \sin \psi_{1,\ell+1} +5 \sin \psi_{1, \ell-1}),\\
&-\sum_{v=1}^3 {_3}\omega_{1,v}(\cos \phi) v \sin \psi_{1,\ell+v} = \sin \phi f_b(\phi),
\end{matrix}
\right.
\end{align}
\begin{align} \label{8:14}
\left\{
\begin{matrix}
& \sum_{v=0}^3 {_3}\omega_{0,v}(\cos \phi) \cos \psi_{0,\ell+v} = \sin \phi f_b(\phi),\\
& -\sum_{v=0}^3 {_3}\omega_{0,v}(\cos \phi) \sin \psi_{0,\ell+v} = \sin \phi f_b(\phi), %\hspace{-9cm}= -3 \sin \phi \cos \phi \cos \psi_{0,\ell-1},
\end{matrix}
\right.
\end{align}
\begin{align} \label{8:22}
\left\{
\begin{matrix}
& \sum_{v=0}^3 {_3}\omega_{0,v}(\cos \phi) \cos \psi_{1,\ell+v} = \sin \phi f_b(\phi),\\
 &- \sum_{v=0}^3 {_3}\omega_{0,v}(\cos \phi) \sin \psi_{1,\ell+v} = \sin \phi f_b(\phi),
\end{matrix}
\right.
\end{align}
we can rewrite the terms \eqref{5:26} as follows:
\begin{align}\label{9:50}
 \frac{\ell+1}{\sin^6 \phi} \ell^2 \sum_{v=0}^{3} {_3}\omega_{2,v}(\cos \phi) p_{0,\ell+v}(\phi) &=\frac{\ell^3}{\sin^6 \phi} \big[\sin^3 \phi \sin \psi_{0,\ell} h_0(0) s_{0,0}(\ell,\phi) \sum_{j=0}^1 \binom{-1/2}{j} \frac{1}{\ell^j} \frac{1}{2^j} \\
&\;\;+ \sin^3 \phi \sin \psi_{0,\ell} h_0(2) s_{0,2}(\ell,\phi) -3 \sin^2 \phi \cos \psi_{0,\ell+1} h_0(0) s_{0,0}(\ell,\phi) \frac{1}{\ell} \nonumber\\
&\;\;+f_b(\phi) \sin \phi s_{0,0}(\ell,\phi) \frac{1}{\ell^2} \big] +\frac{\ell^2}{\sin^6 \phi} \big[ \sin^3 \phi \sin \psi_{0,\ell} h_0(0) s_{0,0}(\ell,\phi) \big] \nonumber\\
&\;\;+ \frac{\ell^3}{\sin^6 \phi} \big[ \sin^3 \phi \cos \psi_{0,\ell} h_0(1) s_{0,1}(\ell,\phi) + \sin^2 \phi f_b(\phi) s_{0,1}(\ell,\phi) \frac{1}{\ell} \big] \nonumber\\
&\;\;+ \phi^{-6-1/2}O(\ell^{-1/2})+\phi^{-4} O(\ell), \nonumber
\end{align}
\begin{align} \label{9:50i}
\frac{\ell+1}{\sin^6 \phi} \ell^2 \sum_{v=0}^{3} {_3}\omega_{2,v}(\cos \phi) p_{1,\ell+v}(\phi) &= \frac{\ell^3}{\sin^6 \phi} \big[- \sin^3 \phi \sin \psi_{1,\ell} h_{1}(0) s_{1,0}(\ell,\phi) \big]\\
&\;\;+ \phi^{-6-1/2}O(\ell^{-1/2})+\phi^{-4} O(\ell), \nonumber
\end{align}
\begin{align} \label{9:50ii}
\frac{\ell+1}{\sin^6 \phi} \ell \sum_{v=0}^{3} {_3}\omega_{1,v}(\cos \phi) p_{0,\ell+v}(\phi) &=\frac{\ell^2}{\sin^6 \phi} \big[ \frac 1 2 \sin^2 \phi (\cos \psi_{0,\ell+1} + 5 \cos \psi_{0,\ell-1}) h_0(0) s_{0,0}(\ell,\phi) + \sin \phi f_b(\phi) s_{0,0}(\ell,\phi) \frac{1}{\ell} \big] \\
&\;\;+\frac{\ell^2}{\sin^6 \phi} \big[ - \frac 1 2 \sin^2 \phi (\sin \psi_{0,\ell+1}+5 \sin \psi_{0,\ell-1}) h_0(1) s_{0,1}(\ell,\phi) \big] \nonumber\\
&\;\;+ \phi^{-6-1/2}O(\ell^{-1/2})+\phi^{-4} O(\ell), \nonumber
\end{align}
\begin{align} \label{9:50iii}
\frac{\ell+1}{\sin^6 \phi} \ell \sum_{v=0}^{3} {_3}\omega_{1,v}(\cos \phi) p_{1,\ell+v}(\phi) = \phi^{-6-1/2}O(\ell^{-1/2})+\phi^{-4} O(\ell),
\end{align}
\begin{align} \label{9:50iiii}
\frac{\ell+1}{\sin^6 \phi} \sum_{v=0}^{3} {_3}\omega_{0,v}(\cos \phi) p_{0,\ell+v}(\phi) = \frac{\ell}{\sin \phi} \big[ \sin \phi f_b(\phi) s_{0,0}(\ell,\phi) \big] + \phi^{-6-1/2}O(\ell^{-1/2})+\phi^{-4} O(\ell),
\end{align}
\begin{align} \label{9:50iiiii}
\frac{\ell+1}{\sin^6 \phi} \sum_{v=0}^{3} {_3}\omega_{0,v}(\cos \phi) p_{1,\ell+v}(\phi) = \phi^{-6-1/2}O(\ell^{-1/2})+\phi^{-4} O(\ell).
\end{align}
Formula \eqref{P'''s} in the statement is obtained by summing up the terms \eqref{9:50}-\eqref{9:50iiiii}. \\

\noindent {\it Fourth derivative}\\

\begin{table}\centering
\ra{1.3}
\begin{tabular}{@{}llcll@{}}\toprule
 \multicolumn{2}{c}{\hspace{1cm}$P'''_{\ell}(\cos \phi)$} & \phantom{a}& \multicolumn{2}{c}{\hspace{1cm} $P''''_{\ell}(\cos \phi)$} \\
 \cmidrule{1-2} \cmidrule{4-5}
fix $m=2$ in \eqref{Nform} & && fix $m=3$ in \eqref{Nform} & \\
 \cmidrule{1-2} \cmidrule{4-5}
\eqref{5:26} with $u=2, n=0$ $r=2$ & and \eqref{7:30} $\implies$ \eqref{9:50} && \eqref{11:22} $u=3, n=0$ $r=3$ & and \eqref{11:57} $\implies$ \eqref{6:24} \\
 \eqref{5:26} with $u=2, n=1$ $r=2$ & and \eqref{7:35} $\implies$ \eqref{9:50i}&& \eqref{11:22} $u=3, n=1$ $r=3$ & and \eqref{12:10} $\implies$ \eqref{6:24_1} \\
 \eqref{5:26} with $u=1, n=0$ $r=1$ & and \eqref{7:44} $\implies$ \eqref{9:50ii} &&\eqref{11:22} $u=3, n=2$ $r=3$ & and \eqref{12:18} $\implies$ \eqref{6:24_2}\\
\eqref{5:26} with $u=1, n=1$ $r=1$ & and \eqref{7:59} $\implies$ \eqref{9:50iii} &&\eqref{11:22} $u=2, n=0$ $r=2$ & and \eqref{12:31} $\implies$ \eqref{6:24_3} \\
\eqref{5:26} with $u=0, n=0$ $r=0$ & and \eqref{8:14} $\implies$ \eqref{9:50iiii} && \eqref{11:22} $u=2, n=1$ $r=2$ & and \eqref{12:33} $\implies$ \eqref{6:24_4} \\
\eqref{5:26} with $u=0, n=1$ $r=0$ & and \eqref{8:22} $\implies$ \eqref{9:50iiiii} && \eqref{11:22} $u=2, n=2$ $r=2$ & and \eqref{12:35} $\implies$ \eqref{6:24_5} \\
 & & &\eqref{11:22} $u=1, n=0$ $r=1$ & and \eqref{4:35} $\implies$ \eqref{6:24_6}\\
 & & &\eqref{11:22} $u=1, n=1$ $r=1$ & and \eqref{4:40} $\implies$ \eqref{6:24_7}\\
 & & & \eqref{11:22} $u=1, n=2$ $r=1$ & and \eqref{4:41} $\implies$ \eqref{6:24_8} \\
 & && \eqref{11:22} $u=0, n=0$ $r=0$ & and\eqref{4:43} $\implies$ \eqref{6:24_9} \\
 & && \eqref{11:22} $u=0, n=1$ $r=0$ & and \eqref{4:46} $\implies$ \eqref{6:24_10}\\
 & && \eqref{11:22}$u=0, n=2$ $r=0$ & and \eqref{4:47} $\implies$ \eqref{6:24_11}\\
 \bottomrule
\end{tabular}
\caption{}
 \label{SecondThirdT}
\end{table}

\noindent The proof of formula \eqref{P''''s} goes along the same lines. In view of \eqref{P''''} and by applying \eqref{Nform}, where we fix $m=3$, we have:
 \begin{align} \label{11:22}
P''''_\ell(\cos \phi)= \frac{\ell+1}{\sin^8 \phi} \sum_{u=0}^3 \ell^u \sum_{n=0}^2 \sum_{v=0}^4
{_4}\omega_{u,v}(\cos \phi) p_{n,\ell+v}(\phi) +\phi^{-5} O(\ell).
\end{align}
\noindent We can simplify each term in \eqref{11:22} by observing that:
\begin{align} \label{11:57}
\left\{
\begin{matrix}
&\sum_{v=0}^4 {_4}\omega_{3,v }(\cos \phi) \cos \psi_{0,\ell+v} =\sin^4 \phi \cos \psi_{0,\ell},\\
&\sum_{u=1}^4 u\, {_4}\omega_{3,v }(\cos \phi) \cos \psi_{0,\ell+v} = 4 \sin^3 \phi \cos \psi_{1,\ell+1},\\
&\sum_{u=1}^4 u^i \, {_4}\omega_{3,v }(\cos \phi) \cos \psi_{0,\ell+v} = \sin^{4-i} \phi f_b(\phi), \hspace{0.5cm} i=2,3,\\
& - \sum_{u=0}^4 {_4}\omega_{3,v }(\cos \phi) \sin \psi_{0,\ell+v} =-\sin^4 \phi \sin \psi_{0,\ell},\\
&- \sum_{u=1}^4 u^i \, {_4}\omega_{3,v }(\cos \phi) \sin \psi_{0,\ell+v} = \sin^{4-i} \phi f_b(\phi), \hspace{0.5cm} i=1,2,3,\\
\end{matrix}
\right.
\end{align}
\begin{align} \label{12:10}
\left\{
\begin{matrix}
&\sum_{u=0}^4 {_4}\omega_{3,v }(\cos \phi) \cos \psi_{1,\ell+v} =\sin^4 \phi \cos \psi_{1,\ell},\\
&\sum_{u=1}^4 u^i \, {_4}\omega_{3,v }(\cos \phi) \cos \psi_{1,\ell+v} = \sin^{4-i} \phi f_b(\phi), \hspace{0.5cm} i=1,2,3,\\
&-\sum_{u=0}^4 {_4}\omega_{3,v }(\cos \phi) \sin \psi_{1,\ell+v} =\sin^4 \phi f_b(\phi),\\
& -\sum_{u=1}^4 u^i \, {_4}\omega_{3,v }(\cos \phi) \sin \psi_{1,\ell+v} = \sin^{4-i} \phi f_b(\phi), \hspace{0.5cm} i=1,2,3,\\
\end{matrix}
\right.
\end{align}
\begin{align} \label{12:18}
\left\{
\begin{matrix}
&\sum_{u=0}^4 {_4}\omega_{3,v }(\cos \phi) \cos \psi_{2,\ell+v} =\sin^4 \phi f_b(\phi),\\
&\sum_{u=1}^4 u^i \, {_4}\omega_{3,v }(\cos \phi) \cos \psi_{2,\ell+v} = \sin^{4-i} \phi f_b(\phi), \hspace{0.5cm} i=1,2,3,\\
&-\sum_{u=0}^4 {_4}\omega_{3,v }(\cos \phi) \sin \psi_{2,\ell+v} =\sin^4 \phi f_b(\phi),\\
& -\sum_{u=1}^4 u^i \, {_4}\omega_{3,v }(\cos \phi) \sin \psi_{2,\ell+v} = \sin^{4-i} \phi f_b(\phi), \hspace{0.5cm} i=1,2,3,\\
\end{matrix}
\right.
\end{align}
\begin{align} \label{12:31}
\left\{
\begin{matrix}
&\sum_{u=0}^4 {_4}\omega_{2,v }(\cos \phi) \cos \psi_{0,\ell+v} =-3/2 \sin^3 \phi (\sin \psi_{0,\ell+1}+3 \sin \psi_{0,\ell-1}), \\
&\sum_{u=1}^4 u^i\, {_4}\omega_{2,v }(\cos \phi) \cos \psi_{0,\ell+v} = \sin^{3-i} \phi f_b(\phi),\hspace{0.5cm} i=1,2 \\
& - \sum_{u=0}^4 {_4}\omega_{2,v }(\cos \phi) \sin \psi_{0,\ell+v} = \sin^3 \phi f_b(\phi), \\
&- \sum_{u=1}^4 u^i \, {_4}\omega_{2,v }(\cos \phi) \sin \psi_{0,\ell+v} = \sin^{3-i} \phi f_b(\phi) ,\hspace{0.5cm} i=1,2, \\
\end{matrix}
\right.
\end{align}
\begin{align} \label{12:33}
\left\{
\begin{matrix}
&\sum_{u=0}^4 {_4}\omega_{2,v }(\cos \phi) \cos \psi_{1,\ell+v} = \sin^3 \phi f_b(\phi), \\
&\sum_{u=1}^4 u^i\, {_4}\omega_{2,v }(\cos \phi) \cos \psi_{1,\ell+v} = \sin^{3-i} \phi f_b(\phi),\hspace{0.5cm} i=1,2, \\
& - \sum_{u=0}^4 {_4}\omega_{2,v }(\cos \phi) \sin \psi_{1,\ell+v} = \sin^3 \phi f_b(\phi), \\
&- \sum_{u=1}^4 u^i \, {_4}\omega_{2,v }(\cos \phi) \sin \psi_{1,\ell+v} = \sin^{3-i} \phi f_b(\phi) ,\hspace{0.5cm} i=1,2, \\
\end{matrix}
\right.
\end{align}
\begin{align} \label{12:35}
\left\{
\begin{matrix}
&\sum_{u=0}^4 {_4}\omega_{2,v }(\cos \phi) \cos \psi_{2,\ell+v} = \sin^3 \phi f_b(\phi), \\
&\sum_{u=1}^4 u^i\, {_4}\omega_{2,v }(\cos \phi) \cos \psi_{2,\ell+v} = \sin^{3-i} \phi f_b(\phi),\hspace{0.5cm} i=1,2, \\
& - \sum_{u=0}^4 {_4}\omega_{2,v }(\cos \phi) \sin \psi_{2,\ell+v} = \sin^3 \phi f_b(\phi), \\
&- \sum_{u=1}^4 u^i \, {_4}\omega_{2,v }(\cos \phi) \sin \psi_{2,\ell+v} = \sin^{3-i} \phi f_b(\phi) ,\hspace{0.5cm} i=1,2, \\
\end{matrix}
\right.
\end{align}
 \begin{align} \label{4:35}
\left\{
\begin{matrix}
&\sum_{u=0}^4 {_4}\omega_{1,v }(\cos \phi) \cos \psi_{0,\ell+v} = \sin^2 \phi f_b(\phi), \\
&\sum_{u=1}^4 u\, {_4}\omega_{1,v }(\cos \phi) \cos \psi_{0,\ell+v} = \sin \phi f_b(\phi), \\
& - \sum_{u=0}^4 {_4}\omega_{1,v }(\cos \phi) \sin \psi_{0,\ell+v} = \sin^2 \phi f_b(\phi), \\
&- \sum_{u=1}^4 u \, {_4}\omega_{1,v }(\cos \phi) \sin \psi_{0,\ell+v} = \sin \phi f_b(\phi), \\
\end{matrix}
\right.
\end{align}
\begin{align} \label{4:40}
\left\{
\begin{matrix}
&\sum_{u=0}^4 {_4}\omega_{1,v }(\cos \phi) \cos \psi_{1,\ell+v} = \sin^2 \phi f_b(\phi), \\
&\sum_{u=1}^4 u\, {_4}\omega_{1,v }(\cos \phi) \cos \psi_{1,\ell+v} = \sin \phi f_b(\phi), \\
& - \sum_{u=0}^4 {_4}\omega_{1,v }(\cos \phi) \sin \psi_{1,\ell+v} =\sin^2 \phi f_b(\phi), \\
&- \sum_{u=1}^4 u \, {_4}\omega_{1,v }(\cos \phi) \sin \psi_{1,\ell+v} = \sin \phi f_b(\phi), \\
\end{matrix}
\right.
\end{align}
\begin{align} \label{4:41}
\left\{
\begin{matrix}
&\sum_{u=0}^4 {_4}\omega_{1,v }(\cos \phi) \cos \psi_{2,\ell+v} = \sin^2 \phi f_b(\phi), \\
&\sum_{u=1}^4 u\, {_4}\omega_{1,v }(\cos \phi) \cos \psi_{2,\ell+v} = \sin \phi f_b(\phi), \\
& - \sum_{u=0}^4 {_4}\omega_{1,v }(\cos \phi) \sin \psi_{2,\ell+v} =\sin^2 \phi f_b(\phi), \\
&- \sum_{u=1}^4 u \, {_4}\omega_{1,v }(\cos \phi) \sin \psi_{2,\ell+v} = \sin \phi f_b(\phi), \\
\end{matrix}
\right.
\end{align}
\begin{align} \label{4:43}
\left\{
\begin{matrix}
&\sum_{u=0}^4 {_4}\omega_{0,v }(\cos \phi) \cos \psi_{0,\ell+v} = \sin \phi f_b(\phi), \\
& - \sum_{u=0}^4 {_4}\omega_{0,v }(\cos \phi) \sin \psi_{0,\ell+v} = \sin \phi f_b(\phi), \\
\end{matrix}
\right.
\end{align}
\begin{align} \label{4:46}
\left\{
\begin{matrix}
&\sum_{u=0}^4 {_4}\omega_{0,v }(\cos \phi) \cos \psi_{1,\ell+v} = \sin \phi f_b(\phi), \\
& - \sum_{u=0}^4 {_4}\omega_{0,v }(\cos \phi) \sin \psi_{1,\ell+v} = \sin \phi f_b(\phi), \\
\end{matrix}
\right.
\end{align}
\begin{align} \label{4:47}
\left\{
\begin{matrix}
&\sum_{u=0}^4 {_4}\omega_{0,v }(\cos \phi) \cos \psi_{2,\ell+v} = \sin \phi f_b(\phi), \\
& - \sum_{u=0}^4 {_4}\omega_{0,v }(\cos \phi) \sin \psi_{2,\ell+v} = \sin \phi f_b(\phi), \\
\end{matrix}
\right.
\end{align}
\noindent Now combining \eqref{11:22} with \eqref{11:57}-\eqref{4:47} as described in Table \ref{SecondThirdT} one obtain:
\begin{align}
&\frac{\ell+1}{\sin^8 \phi} \ell^3 \sum_{v=0}^4 {_4}\omega_{3,v}(\cos \phi) p_{0,\ell+v}(\phi) \nonumber \\
&= \frac{\ell^4}{\sin^8 \phi} \Big[ \sin^4 \phi \cos \psi_{0,\ell} \sum_{k=0}^1 (-1)^k h_0(2k) s_{0,2k}(\ell,\phi) \sum_{j=0}^1 \binom{-2k-1/2}{j} \frac{1}{\ell^j} \frac{1}{2^j} \label{6:24} \\
&\;\;+ \sin^4 \phi s_{0,0}(\ell,\phi) \frac{1}{\ell^2} f_b(\phi)+ \sin^3 \phi\, 4 \cos \psi_{1,\ell+1} \sum_{k=0}^1 (-1)^k h_0(2k) s_{0,2k}(\ell,\phi) \frac{1}{\ell} \nonumber\\
&\;\;+ \sin^3 \phi\, s_{0,0}(\ell,\phi) \frac{1}{\ell^2} f_b(\phi) + \sin^2 \phi f_b(\phi) s_{0,0}(\ell,\phi) \sum_{j=2}^3 \frac{1}{\ell^j}+ \sin \phi f_b(\phi) s_{0,0}(\ell,\phi) \frac{1}{\ell^3} \Big] \nonumber\\
&\;\;+\frac{\ell^3}{\sin^8 \phi} \Big[ \sin^4 \phi \cos \psi_{0,\ell} \sum_{k=0}^1 (-1)^k h_{0}(2k) s_{0,2k}(\ell,\phi) + \sin^4 \phi s_{0,0}(\ell,\phi) \frac{1}{\ell} f_b(\phi) \nonumber \\
&\;\;+ \sin^3 \phi s_{0,0}(\ell,\phi) \frac{1}{\ell} f_b(\phi) + \sin^2 \phi f_b(\phi) s_{0,0}(\ell,\phi) \frac{1}{\ell^2}\Big]\nonumber \\
&\;\;+\frac{\ell^4}{\sin^8 \phi} \Big[ -\sin^4 \phi \sin \psi_{0,\ell} \sum_{k=0}^1 (-1)^k h_{0}(2k+1) s_{0,2k+1}(\ell,\phi)- \sin^4 \phi s_{0,1}(\ell,\phi) \frac{1}{\ell} f_b(\phi) \nonumber \\
&\;\;+ \sin^3 \phi f_b(\phi) s_{0,1}(\ell,\phi) \frac{1}{\ell} + \sin^3 \phi f_b(\phi) s_{0,1}(\ell,\phi) \frac{1}{\ell^2}+ \sin^2 \phi f_b(\phi) s_{0,1}(\ell,\phi) \frac{1}{\ell^2} \Big] \nonumber \\
&\;\;+ \frac{\ell^3}{\sin^8 \phi} \Big[ \sin^4 \phi s_{0,1}(\ell,\phi) f_b(\phi)+ \sin^3 \phi f_b(\phi) s_{0,1}(\ell,\phi) \frac{1}{\ell} \Big] \nonumber\\
&\;\;+ \phi^{-8-1/2} O(\ell^{-1/2})+\phi^{-5} O(\ell), \nonumber
\end{align}

\begin{align}
\frac{\ell+1}{\sin^8 \phi} \ell^3 \sum_{v=0}^4 {_4}\omega_{3,v}(\cos \phi) p_{1,\ell+v}(\phi) &=\frac{\ell^4}{\sin^8 \phi} \Big[ \sin^4 \phi \cos \psi_{1,\ell} h_1(0) s_{1,0}(\ell,\phi) \sum_{j=0}^1 \frac{1}{\ell^j}+ \sin^3 \phi f_b(\phi) s_{1,0}(\ell,\phi) \frac{1}{\ell} \Big]\label{6:24_1}\\
&\;\;+ \frac{\ell^3}{\sin^8 \phi}\Big[\sin^4 \phi s_{1,0}(\ell,\phi) f_b(\phi) \Big] + \frac{\ell^4}{\sin^8 \phi} \Big[ \sin^4 \phi f_b(\phi) s_{1,1}(\ell,\phi) \Big] \nonumber \\
&\;\; + \phi^{-8-1/2} O(\ell^{-1/2})+\phi^{-5} O(\ell), \nonumber
\end{align}

\begin{align}
\frac{\ell+1}{\sin^8 \phi} \ell^3 \sum_{v=0}^4 {_4}\omega_{3,v}(\cos \phi) p_{2,\ell+v}(\phi) = \frac{\ell^4}{\sin^8 \phi} \Big[ \sin^4 \phi f_b(\phi) s_{2,0}(\ell,\phi) \Big] + \phi^{-8-1/2} O(\ell^{-1/2})+\phi^{-5} O(\ell), \label{6:24_2}
\end{align}

\begin{align}
&\frac{\ell+1}{\sin^8 \phi} \ell^2 \sum_{v=0}^4 {_4}\omega_{2,v}(\cos \phi) p_{0,\ell+v}(\phi) \nonumber \\
&=\frac{\ell^3}{\sin^8 \phi} \Big[ -\frac 3 2 \sin^3 \phi (\sin \psi_{0,\ell+1}+3 \sin \psi_{0,\ell-1} ) h_0(0) s_{0,0}(\ell,\phi) \sum_{j=0}^1 \binom{-1/2}{j} \frac{1}{\ell^j} \frac{1}{2^j} \label{6:24_3} \\
&\;\; + \sin^3 \phi f_b(\phi) s_{0,2}(\ell,\phi) + \sin^2 \phi f_b(\phi) s_{0,0}(\ell,\phi) \sum_{j=1}^2 \frac{1}{\ell^j} + \sin \phi f_b(\phi) s_{0,0}(\ell,\phi) \frac{1}{\ell^2}\Big] \nonumber\\
&\;\;+ \frac{\ell^2}{\sin^8 \phi} \Big[ \sin^3 \phi f_b(\phi) s_{0,0}(\ell,\phi) + \sin^2 \phi f_b(\phi) s_{0,0}(\ell,\phi) \frac{1}{\ell} \Big] \nonumber \\
&\;\;+ \frac{\ell^3}{\sin^8 \phi} \Big[ \sin^3 \phi f_b(\phi) s_{0,1}(\ell,\phi) \sum_{j=0}^1 \frac{1}{\ell^j} + \sin^2 \phi f_b(\phi) s_{0,1}(\ell,\phi) \frac{1}{\ell}\Big] + \frac{\ell^2}{\sin^8 \phi} \Big[ \sin^3 \phi f_b(\phi) s_{0,1}(\ell,\phi) \Big] \nonumber
\\&\;\;+ \phi^{-8-1/2} O(\ell^{-1/2})+\phi^{-5} O(\ell), \nonumber
\end{align}

\begin{align}
\frac{\ell+1}{\sin^8 \phi} \ell^2 \sum_{v=0}^4 {_4}\omega_{2,v}(\cos \phi) p_{1,\ell+v}(\phi) = \frac{\ell^3}{\sin^8 \phi} \Big[ \sin^3 \phi f_b(\phi) s_{1,0}(\ell,\phi) \Big]+\phi^{-8-1/2} O(\ell^{-1/2})+\phi^{-5} O(\ell), \label{6:24_4}
\end{align}

\begin{align}
&\frac{\ell+1}{\sin^8 \phi} \ell^2 \sum_{v=0}^4 {_4}\omega_{2,v}(\cos \phi) p_{2,\ell+v}(\phi) = \phi^{-8-1/2} O(\ell^{-1/2})+\phi^{-5} O(\ell), \label{6:24_5}
\end{align}

\begin{align}
\frac{\ell+1}{\sin^8 \phi} \ell \sum_{v=0}^4 {_4}\omega_{1,v}(\cos \phi) p_{0,\ell+v}(\phi) &=\frac{\ell^2}{\sin^8 \phi} \Big[ \sin^2 \phi f_b(\phi) s_{0,0}(\ell,\phi) \sum_{j=0}^1 \frac{1}{\ell^j} + \sin \phi f_b(\phi) s_{0,0}(\ell,\phi) \frac{1}{\ell}\Big] \label{6:24_6} \\
&\;\; + \frac{\ell}{\sin^8 \phi} \Big[ \sin^2 \phi f_b(\phi) s_{0,0} (\ell,\phi) \Big] + \frac{\ell^2}{\sin^8 \phi} \Big[ \sin^2 \phi f_b(\phi) s_{0,1}(\ell,\phi) \Big] \nonumber\\ &\;\;+ \phi^{-8-1/2} O(\ell^{-1/2})+\phi^{-5} O(\ell), \nonumber
\end{align}

\begin{align}
&\frac{\ell+1}{\sin^8 \phi} \ell \sum_{v=0}^4 {_4}\omega_{1,v}(\cos \phi) p_{1,\ell+v}(\phi) = \phi^{-8-1/2} O(\ell^{-1/2})+\phi^{-5} O(\ell), \label{6:24_7}
\end{align}

\begin{align}
&\frac{\ell+1}{\sin^8 \phi} \ell \sum_{v=0}^4 {_4}\omega_{1,v}(\cos \phi) p_{2,\ell+v}(\phi) = \phi^{-8-1/2} O(\ell^{-1/2})+\phi^{-5} O(\ell), \label{6:24_8}
\end{align}

\begin{align}
\frac{\ell+1}{\sin^8 \phi} \sum_{v=0}^4 {_4}\omega_{0,v}(\cos \phi) p_{0,\ell+v}(\phi) =\frac{\ell}{\sin^8 \phi} \Big[ \sin \phi f_b(\phi) s_{0,0}(\ell,\phi) \Big]+\phi^{-8-1/2} O(\ell^{-1/2})+\phi^{-5} O(\ell), \label{6:24_9}
\end{align}

\begin{align}
&\frac{\ell+1}{\sin^8 \phi} \sum_{v=0}^4 {_4}\omega_{0,v}(\cos \phi) p_{1,\ell+v}(\phi)= \phi^{-8-1/2} O(\ell^{-1/2})+\phi^{-5} O(\ell), \label{6:24_10}
\end{align}

\begin{align}
&\frac{\ell+1}{\sin^8 \phi} \sum_{v=0}^4 {_4}\omega_{0,v}(\cos \phi) p_{2,\ell+v}(\phi)= \phi^{-8-1/2} O(\ell^{-1/2})+\phi^{-5} O(\ell). \label{6:24_11}
\end{align}
The sum of \eqref{6:24}-\eqref{6:24_11} gives the asymptotic behaviour of the fourth order derivative \eqref{P''''s}.

 \end{proof}

\section{Proof of formula \eqref{propinI}} \label{ProofpropinI}

\subsection{Approximate Kac-Rice formula for counting critical points with value in $I \subseteq \R$} \label{K-RinI}
For counting the number of critical points with corresponding value lying in any interval $I$ in the real line, we define, for $x \ne \pm y$, the two-point correlation function $K_{2,\ell}(x,y)$ as:
\begin{equation*}
K_{2,\ell}(x,y;t_{1},t_{2})=
\mathbb{E}\left[ \left\vert \nabla ^{2}f_{\ell }(x)\right\vert
\cdot \left\vert \nabla ^{2}f_{\ell }(y)\right\vert \Big| \nabla
f_{\ell }(x)=\nabla f_{\ell }(y)={\bf 0},f_{\ell }(x)=t_{1},f_{\ell
}(y)=t_{2}\right] \cdot \varphi _{x,y, \ell}(t_{1},t_{2}, \mathbf{0},\mathbf{0}),
\end{equation*}
where $\varphi _{x,y, \ell}(t_{1},t_{2},\mathbf{0},\mathbf{0})$ denotes
the density of the 6-dimensional vector
$$
\left( f_{\ell }(x),f_{\ell }(y),\nabla f_{\ell }(x),\nabla
f_{\ell }(y)\right)
$$
in $f_{\ell }(x)=t_{1},f_{\ell }(y)=t_{2},\nabla f_{\ell }(x)=\nabla f_{\ell}(y)={\bf 0}$. In \cite{CMW} the following {\it approximate Kac-Rice} formula is derived:
\begin{align} \label{afkaoI}
{\rm Var} \left( \mathcal{N}_I^{c} (f_{\ell })\right) = \int_{\mathcal{W}} \iint_{I \times I} K_{2,\ell}(x,y; t_1, t_2) \; d t_1 d t_2 d x d y - (\E [ \mathcal{N}_I^{c} (f_{\ell })])^2 + O(\ell^2).
\end{align}
Now, exploiting isotropy and observing that the level field $f_{\ell}$ is a linear combination of gradient and second order derivatives,
we have \cite[Section 4.1.2]{CMW}:
\begin{equation*}
K_{2,\ell }(\phi ; t_{1},t_{2}) = \frac{\lambda_{\ell}^{4}}{8} \frac{1}{\pi^{2} \sqrt{(\lambda _{\ell }^{2}-4\alpha _{2,\ell }^{2}(\phi
))(\lambda _{\ell }^{2}-4\alpha _{1,\ell }^{2}(\phi ))}} q (\mathbf{a}_{\ell}(\phi);t_1,t_2),
\end{equation*}
where
\begin{align*}
q (\mathbf{a};t_1,t_2)&=\frac{1}{(2\pi )^{3} \sqrt{\det (\Delta (\mathbf{a}))} }\iint_{\mathbb{R}^{2}\times \mathbb{R}%
^{2}} \left|z_{1} \sqrt 8 t_1 -z_{1}^{2}-z_{2}^{2}\right|\cdot \left| w_{1} \sqrt 8 t_2 -w_{1}^{2}-w_{2}^{2}\right| \\
& \hspace{0.2cm} \times \exp \left\{-\frac{1}{2}
(z_{1},z_{2},\sqrt 8
t_{1}-z_{1},w_{1},w_{2}, \sqrt 8 t_{2}-w_{1}) \Delta
(\mathbf{a})^{-1} (z_{1},z_{2},\sqrt 8
t_{1}-z_{1},w_{1},w_{2}, \sqrt 8 t_{2}-w_{1})^t \right\}\\
& \hspace{0.2cm} \times d z_1 dz_2 d w_1 dw_2.
\end{align*}

\subsection{Taylor expansion and asymptotics for the two-point correlation function}
By performing the Taylor expansion as in Section \ref{sec-taylor} and by applying Lemma \ref{L-zero} and
Lemma \ref{L-uno} as in Section \ref{sec-asymp}, one obtain that, in the high energy limit,
\begin{align} \label{var_crit_ex}
\text{Var}({\cal N}^c(f_{\ell}))&= \frac 1 8 \left[ 2 \ell^3 + \frac{2 \cdot 3^2}{ \pi^2} \ell^2 \log \ell \right] \iint_{I \times I} q(\mathbf{0};t_1,t_2) dt_1 dt_2 \nonumber \\
&\;\;+ \frac 1 8 \left[ - 16 \ell^3 - \frac{2^5 \cdot 3 }{\pi^2} \ell^2 \log \ell \right] \; \iint_{I \times I} \Big[\frac{\partial }{\partial a_{3} } q (\mathbf{a};t_1,t_2)\Big]_{\mathbf{a} =\mathbf{0}} dt_1 dt_2 \nonumber \\
&\;\;+ \frac 1 8 \left[ 32 \ell^3 - \frac{ 2^6}{\pi^2} \ell^2 \log \ell \right] \iint_{I \times I} \Big[\frac{\partial^2 }{\partial a_{7} \partial a_{7} } q (\mathbf{a};t_1,t_2)\Big]_{\mathbf{a} =\mathbf{0}} dt_1 dt_2 \nonumber \\
&\;\;+ \frac 1 8 \left[ \frac{ 3 \cdot 2^7}{\pi^2} \ell^2 \log \ell \right] \iint_{I \times I} \Big[\frac{\partial^2 }{\partial a_{3} \partial a_{3} } q (\mathbf{a};t_1,t_2)\Big]_{\mathbf{a} =\mathbf{0}} dt_1 dt_2 \nonumber \\
&\;\;+ \frac 1 8 \left[ - \frac{ 2^9}{ \pi^2} \ell^2 \log \ell \right] \iint_{I \times I} \Big[\frac{\partial^3 }{\partial a_{3} \partial a_{7} \partial a_{7} } q (\mathbf{a};t_1,t_2)\Big]_{\mathbf{a} =\mathbf{0}} dt_1 dt_2 \nonumber \\
&\;\;+ \frac 1 8 \left[ \frac{ 2^9}{ \pi^2} \ell^2 \log \ell \right] \iint_{I \times I} \Big[\frac{\partial^4 }{\partial a_{7}\partial a_{7} \partial a_{7} \partial a_{7} } q (\mathbf{a};t_1,t_2)\Big]_{\mathbf{a} =\mathbf{0}} dt_1 dt_2 +O(\ell^2).
\end{align}

\subsection{Evaluation of the leading constant}
Let ${\cal I}_{I,r}$, $r=0,2,4$, be the integrals ${\cal I}_{I,r}=\int_{I} p_r(t) dt$ where the functions $p_r$ for $r=0,2,4$ are defined by
\begin{align*}
p_0(t)&=\sqrt{8} \cdot \mathbb{E} \Big[|Y_1 Y_3 - Y_2^2| \Big| Y_1+Y_3=\sqrt{8} t \Big] \cdot \phi_{Y_1+Y_3} (\sqrt{8} t) = \sqrt{\frac{2}{\pi}} [2 e^{-t^2} + t^2-1] e^{- \frac{t^2}{2}},\\
p_2(t)&=\sqrt{8} \cdot \mathbb{E} \Big[(3 t -\sqrt 2 Y_1)^2 |Y_1 Y_3 - Y_2^2| \Big| Y_1+Y_3=\sqrt{8} t \Big] \cdot \phi_{Y_1+Y_3} (\sqrt{8} t) \\
&= \sqrt{\frac{2}{\pi}} [-4+t^2+t^4+ e^{-t^2} 2(4+ 3 t^2)] e^{- \frac{t^2}{2}},\\
p_4(t)&=\sqrt{8} \cdot \mathbb{E} \Big[(3 t -\sqrt 2 Y_1)^4 |Y_1 Y_3 - Y_2^2| \Big| Y_1+Y_3=\sqrt{8} t \Big] \cdot \phi_{Y_1+Y_3} (\sqrt{8} t) \\
&= \sqrt{\frac{2}{\pi}} [(72+96 t^2+38 t^4) e^{-t^2} -36 -12 t^2 +11 t^4+t^6] e^{-\frac{t^2}{2}},
\end{align*}
and $Y=(Y_{1},Y_{2},Y_{3})$ is the centred jointly Gaussian random vector defined in Section \ref{eval-const}. Using Leibniz integral rule and some mechanical computations, one has
\begin{align*}
& \iint_{I \times I} q(\mathbf{0};t_1,t_2) dt_1 dt_2=\frac{1}{2^3} {\cal I}^2_{I,0},\\
& \iint_{I \times I} \Big[\frac{\partial }{\partial a_{3} } q (\mathbf{a};t_1,t_2)\Big]_{\mathbf{a} =\mathbf{0}} dt_1 dt_2 = \frac{1}{2^6} [-3 {\cal I}^2_{I,0}+ {\cal I}_{I,0} {\cal I}_{I,2}], \\
 &\iint_{I \times I} \Big[\frac{\partial^2 }{\partial a_{7} \partial a_{7} } q (\mathbf{a};t_1,t_2)\Big]_{\mathbf{a} =\mathbf{0}} dt_1 dt_2
 = \frac{1}{2^9} [3 {\cal I}_{I,0} - {\cal I}_{I,2} ]^2,\\
&\iint_{I \times I} \Big[\frac{\partial^2 }{\partial a_{3} \partial a_{3} } q (\mathbf{a};t_1,t_2)\Big]_{\mathbf{a} =\mathbf{0}} dt_1 dt_2
 = \frac{1}{2^{11}} [ 2^3 \cdot 3^2 {\cal I}^2_{I,0} - 2^4 \cdot 3 {\cal I}_{I,0} {\cal I}_{I,2} + 2 {\cal I}_{I,0} {\cal I}_{I,4}+2 {\cal I}^2_{I,2} ], \\
&\iint_{I \times I} \Big[\frac{\partial^3 }{\partial a_{3} \partial a_{7} \partial a_{7} } q (\mathbf{a};t_1,t_2)\Big]_{\mathbf{a} =\mathbf{0}} dt_1 dt_2 = \frac{1}{2^{13}} [ -2 \cdot 3^4 {\cal I}_{I,0}^2 +2 \cdot 3^4 {\cal I}_{I,0} {\cal I}_{I,2} - 2 \cdot 3 {\cal I}_{I,0} {\cal I}_{I,4} + 2 {\cal I}_{I,4} {\cal I}_{I,2} -2^2 \cdot 3^2 {\cal I}^2_{I,2} ], \\
 &\iint_{I \times I} \Big[\frac{\partial^4 }{\partial a_{7}\partial a_{7} \partial a_{7} \partial a_{7} } q (\mathbf{a};t_1,t_2)\Big]_{\mathbf{a} =\mathbf{0}} dt_1 dt_2
=\frac{1}{2^{15}} [3^3 {\cal I}_{I,0} - 2 \cdot 3^2 {\cal I}_{I,2}+{\cal I}_{I,4}]^2.
\end{align*}
so that the variance \eqref{var_crit_ex} can be rewritten as
\begin{align*}
\text{Var}({\cal N}_I^c(f_{\ell}))= \frac{1}{2^4}[ 5 {\cal I}_{I,0} - {\cal I}_{I,2}]^2\; \ell^3+ \frac{1}{\pi^2 2^{6}} [51 {\cal I}_{I,0} - 2\cdot 11\; {\cal I}_{I,2}+{\cal I}_{I,4} ]^2\; \ell^2 \log \ell +O(\ell^2).
\end{align*}
Formula \eqref{propinI} follows by observing that
\begin{align} \label{cr}
%\nu^c(t)&=\frac{1}{2^2} [5 {\cal I}_{I,0} - {\cal I}_{I,2}]=\frac{1}{2^2} \sqrt{\frac{2}{\pi}} e^{-\frac{t^2}{2} } [- 1+4 t^2 -t^4 +e^{-t^2}(2-6 t^2) ], \\
\mu^c(t)&=\frac{1}{ \pi 2^3} [51 \, {p}_{0}(t) - 2\cdot 11\; {p}_{2}(t)+{p}_{4}(t) ] =\frac{1}{ \pi 2^3} \sqrt{\frac{2}{\pi}} [(-2 -36 t^2+38 t^4) e^{-t^2} +1+17 t^2-11 t^4+t^6]e^{-\frac{t^2}{2}}.
\end{align}
The proof of \eqref{propinI} for extrema and saddles is analogous and we obtain that $\mu^e$ and $\mu^s$ in \eqref{propinI} are given by
\begin{align}
\mu^e(t)&=\frac{1}{2^3 \pi} \sqrt{\frac{2}{\pi}} [(-1-18 t^2+19 t^4) e^{-t^2} +1+17 t^2-11 t^4+t^6] e^{-\frac{t^2}{2}}, \label{es}\\
\mu^s(t)&=\frac{1}{2^3 \pi} \sqrt{\frac{2}{\pi}} (-1-18 t^2+19 t^4) e^{-\frac{3 t^2}{2}}. \label{sad}
\end{align}

\end{document}